\documentclass[sn-mathphys, Numbered]{sn-jnl}
\newcommand{\orcidID}[1]{\href{https://orcid.org/#1}{{\textsf{\tiny ORCID}}}}

\usepackage{float}
\usepackage{mathtools}   
\usepackage{xcolor}
\usepackage{amsmath,amssymb,amsfonts}
\usepackage{amsthm}
\usepackage{natbib}
\usepackage{mathtools}
\usepackage{chngcntr}

\newcommand{\R}{\mathbb{R}}
\newcommand{\eps}{\varepsilon}
\newcommand{\Om}{\Omega}
\newcommand{\Omzero}{\Omega_{0}}
\newcommand{\Phit}{\Phi}
\newcommand{\DFE}{\overset{\circ}{x}}
\newcommand{\EE}[1][\eps]{x^{*}_{#1}}
\newcommand{\BRN}{\mathcal{R}_{0}}
\newcommand{\epstilde}{\tilde\eps}

\newcommand{\Ws}{W^{s}}

\newcommand{\real}{\mathbb{R}}
\newcommand{\naturals}{\mathbb{N}}
\newcommand{\xfast}{{\bar x}}
\newcommand{\rfast}{{\bar R}}
\newcommand{\cfast}{{\bar C}}
\newcommand{\epsinv}[1][1]{\eps^{-#1}}
\newcommand{\critman}{\mathcal{M}_\infty}
\newcommand{\critsubman}[1][\infty]{\mathcal{K}_{#1}}
\newcommand{\infEE}[1][\varepsilon]{I^*_{#1}}
\newcommand{\EEfast}[1][\eps]{\bar{x}^*_{#1}}
\newcommand{\ffast}{f_{\text{fast}}}
\newcommand{\fslow}{f_{\text{slow}}}
\newcommand{\bigo}{\mathcal{O}}
\newcommand{\poly}{{\mathcal{P}}}

\theoremstyle{thmstyleone}
\newtheorem{thm}{Theorem}[section]
\newtheorem{lemma}[thm]{Lemma}
\newtheorem{cor}[thm]{Corollary}
\newtheorem{prop}[thm]{Proposition}
\newtheorem{rem}[thm]{Remark}

\theoremstyle{thmstylethree}
\newtheorem{dfn}[thm]{Definition}

\begin{document}

\title{Compartmental epidemiological models with infection-driven immune escape}

\author*[1]{\fnm{Daniela R.} \sur{Ram\'{i}rez-Guti\'{e}rrez}\orcidID{0009-0008-1550-8022}}\email{danielarenee.ram@gmail.com}

\author[2]{\fnm{Lajos}  \sur{L\'oczi} \orcidID{0000-0002-7999-5658}}\email{lajos.loczi@kaust.edu.sa}

\author[2]{\fnm{Carlos} \sur{Mu\~{n}oz-Moncayo}
\orcidID{0000-0003-0982-0817}}\email{carlos.munozmoncayo@kaust.edu.sa}

\author[2]{\fnm{David I.} \sur{Ketcheson}\orcidID{0000-0002-1212-126X}}\email{david.ketcheson@kaust.edu.sa}

\affil*[1]{\orgdiv{Departamento de Actuar\'{\i}a, F\'{\i}sica y Matem\'aticas}, \orgname{Universidad de las Am\'ericas Puebla (UDLAP)}, \orgaddress{\city{Puebla}, \postcode{72810}, \country{Mexico}}}

\affil[2]{\orgdiv{Computer Electrical and Mathematical Science and Engineering Division (CEMSE)}, \orgname{King Abdullah University of Science and Technology (KAUST)}, \orgaddress{\city{Thuwal}, \postcode{23955-6900}, \country{Saudi Arabia}}}

\abstract{We propose and study a compartmental epidemiological model that incorporates both waning immunity and mutation-driven immune escape into a single-strain framework. The model, termed SIRCm, extends the SIRC (Susceptible--Infected--Recovered--Cross-immune) model, in which recovered individuals pass through an intermediate cross-immune class before returning to full susceptibility. In SIRCm, both the rate of immune waning and the rate of immune escape are amplified by a feedback function tied to the current level of infection. We study two formulations of this feedback, a transmission-driven model, in which mutation opportunities arise at the point of new infection, and a prevalence-driven model, in which they arise in proportion to the infected population alone. For both models and any feedback strength, we prove existence of an endemic equilibrium using persistence theory, and characterize its behavior analytically in the weak and strong-feedback limits, recovering the SIRC and SIS models, respectively, as limiting cases. Unlike the SIRC model, we show that the SIRCm endemic equilibrium is unstable in certain parameter regimes. This leads to novel behavior, even in the absence of seasonal forcing.}

\keywords{SIRC model, immune escape, waning immunity, endemic equilibrium, Hopf bifurcation, mutation feedback}

\maketitle


\newpage
\section{Background and motivation}
The classical SIR (Susceptible--Infected--Recovered) model and its variants
have proven surprisingly effective at describing the spread of infectious
diseases at the population level. A central assumption in the basic SIR
framework is that recovered individuals acquire permanent immunity, so that
each individual is infected at most once. In practice, two distinct mechanisms
can invalidate this assumption: (i) waning of immune memory over time, and (ii)
viral mutation resulting in immune escape, whereby the pathogen evolves to
circumvent the defenses of previously infected hosts.

The first mechanism---immunity waning---can be incorporated rather simply into
SIRS-type models, in which recovered individuals return to the
susceptible class at some fixed rate \citep{casagrandi2006, pease1987}.
The second mechanism---mutation-driven immune escape---is potentially more complex and can be modeled in various ways.
A well-established approach is to
explicitly model multiple strains; see, e.g.,  \cite{andreasen1997dynamics, andreasen2003dynamics, gog2002dynamics, ferguson2003ecological}.
On the other hand, if one assumes that mutation happens
at a steady rate over time, then the resulting immune escape effect can be incorporated
in the same way as waning, in a single-strain model.

In many epidemic phases, infections may be dominated by one antigenic strain or lineage, with newly emerging strains subsequently replacing previously dominant ones.
Furthermore, in practice mutations do not occur at a
fixed rate; rather, mutation opportunities arise proportionally to the
number of active infections, since each replication event inside a host is a
potential mutation event \citep{boni2004influenza, boni2006, zhang2022}. 
The COVID-19 pandemic has made this interaction especially visible.
Successive variants of concern---Alpha, Delta, Omicron, and its
subvariants---demonstrated that immune escape could play a major role in
driving renewed epidemic waves \citep{markov2023,balloux2022}.
The pandemic also generated an unprecedented volume of genomic sequence data,
providing new opportunities to calibrate and validate mathematical models of
viral evolution at the population level \citep{saha2026}.
These observations motivate an intermediate modeling approach: rather than resolving individual strains, we ask whether the population-level effect of mutation can be represented through a state-dependent rate of immune escape.

In the present work, we formulate and study an extension of the SIRC model \citep{casagrandi2006,li2017} that accounts for the relationship between prevalence and mutation rate, yet avoids the complexity of explicitly tracking multiple strains.
The influence of mutation is implicit and appears solely through an acceleration of individuals transferring to successively less-immune groups.
Our model includes additional terms that provide a coarse-grained representation of mutation-driven immune escape,
without resolving the coexistence and replacement of individual strains explicitly.
Because this model is intended to incorporate the effect of viral \emph{mutation} (distinct from that of the host's waning immunity) we refer to it as SIRCm.
Unlike the SIRC model, but in agreement with some multi-strain models (see, e.g., \cite{andreasen1997dynamics, andreasen2003dynamics},
SIRCm can exhibit non-steady long-term behavior even in the absence of seasonal forcing.
Our approach shows that important qualitative consequences of mutation-driven immune escape can arise within a low-dimensional single-strain model.

An additional feature of SIRCm is that it
smoothly connects the SIRC model with a traditional SIS-type model.
A key parameter in the models describes how strongly new infections feed back into immune escape.
In the weak-feedback limit, the model reduces to SIRC, while in the strong-feedback limit it approaches an SIS-type model.
As a result, the endemic equilibrium undergoes a rapid change around a critical intermediate feedback strength. 

Our main contributions are the following.
\begin{itemize}
    \item In Section \ref{sec:model}, we propose the SIRCm model, a single-strain model of infection-driven immune escape,
    and study two natural choices of feedback: a \emph{transmission-driven} model, in which mutation
    opportunities arise at transmission events, and a \emph{prevalence-driven} model, in which they arise in proportion to the standing infected population.
    \item In Section \ref{sec: persistence theory}, via persistence theory, we
    prove the existence of an endemic equilibrium for arbitrary
    feedback strength, together with uniform strong persistence of the infection.
    \item In Sections \ref{sec:weak_feedback} and \ref{sec:strong_feedback},
    we prove that 
    the endemic equilibrium is unique and locally asymptotically stable
    for weak and strong enough feedback,
    and we provide a recursive approach to computing asymptotic
    approximations---with increasing order of accuracy---of the endemic equilibrium in
    both regimes.
    For strong feedback, a fast--slow decomposition identifies an
    attracting, locally invariant manifold in which the slow dynamics 
    are those of the classical SIS model.
    \item In Section \ref{sec:moderate feedback}, we show, by numerical continuation of the Hopf
    bifurcation set 
    with respect to the feedback strength,
    that the endemic equilibrium of the transmission-driven model  loses stability inside
    a bounded window of feedback strengths.
    Inside that window, trajectories approach a limit cycle and
    prevalence oscillates continuously.
    These oscillations provide a mechanism
    for recurrent epidemic waves without relying on seasonal forcing.
    \item In Section \ref{sec:seasonally_forced}, we examine the SIRCm model under seasonal forcing, showing that mutation feedback can alter the resulting dynamics (e.g., annual cycles, bi-annual cycles, chaotic behavior), relative to the seasonally forced SIRC model. Further examples are given in Appendix \ref{app:seasonal}. 
    \item Finally, Section \ref{sec:discussion} discusses the biological implications of these findings. 
\end{itemize}

The supporting derivations for the equilibrium polynomial of the transmission-driven model and the stability analysis of the prevalence-driven model are collected in Appendices \ref{app:quartic} and \ref{appendixB}, respectively.

\section{Model formulation}\label{sec:model}
We propose and study the following system of ordinary differential equations (ODEs), henceforth referred to as the \textit{SIRCm model}:
\begin{subequations}
\label{eq:SIRCm}
\begin{align}
    \dot S &= \mu(1-S) - \beta SI + (1+ \eps_2g(x))\gamma C, \label{eq:SIRCm S}\\
    \dot I &= \beta SI + \sigma\beta CI - (\mu+\alpha)I,\label{eq:SIRCm I} \\
    \dot R &= (1-\sigma)\beta CI + \alpha I - \mu R - (1+ \eps_1g(x))\delta R, \label{eq:SIRCm R}\\
    \dot C &= (1+ \eps_1g(x))\delta R - \beta CI - \mu C - (1+ \eps_2g(x))\gamma C, \label{eq:SIRCm C}
\end{align}
\end{subequations}
where
\[
  \mu, \beta, \alpha, \delta, \gamma > 0, \qquad \sigma \in [0,1], \qquad \eps_1, \eps_2 \geq 0.
\]
The solution vector of  \eqref{eq:SIRCm}---with suitable initial conditions $(S(0),I(0),R(0),C(0))$---is denoted by \[x(t)\coloneqq(S(t),I(t),R(t),C(t)).\]
The dependent variables correspond to the same compartments as in the SIRC model \citep{casagrandi2006}: \textit{susceptible} ($S$), \textit{infectious} ($I$), \textit{recovered} ($R$), and \textit{cross-immune} ($C$).  Each represents a fraction of the total population, and takes values in $[0,1]$.
As for the parameters, $\mu$ is the \textit{natural birth} (or mortality) \textit{rate}, assumed equal across compartments so that the total population is conserved; 
$\beta$ is the \textit{transmission rate}; and $\alpha$ is the \textit{recovery rate} from active infection;
$\delta$ is the rate at which \textit{complete immunity wanes} into the cross-immune class $C$ ($R\to C$); $\gamma$ is the rate at which \textit{cross-immunity} \textit{wanes} into full susceptibility ($C\to S$);
$\sigma$ is the fraction of $C$–$I$ contacts  that result in \textit{renewed active infection} rather than successful clearance. Finally, $\eps_1$ and $\eps_2$ are the \textit{mutation feedback parameters}, scaling how strongly the current level of active infection amplifies immune-escape $\delta$ and waning $\gamma$, respectively, via the terms $(1+ \eps_1g(x))$ and $(1+ \eps_2g(x))$ multiplying each. Setting $\eps_1 =\eps_2 =0$ recovers the SIRC model of \cite{casagrandi2006}. 
A flowchart of the model dynamics is shown in
Figure \ref{fig:flowdiagram}. 

The effect of mutation is modeled implicitly through the terms involving the function $g(x)$.
We consider two choices of $g$, differing in how the mutation feedback is driven by the state of the system: 
\begin{itemize}
\item the \textit{transmission-driven model}, in which mutation opportunities arise at transmission events, given by

\begin{equation} \label{t-dv_formula}
    g(x) = SI.
\end{equation}

\item the \textit{prevalence-driven model}, in which mutation opportunities arise from the currently infected population, independent of ongoing transmission, given by

\begin{equation} \label{p-dv_formula}
    g(x) = I.
\end{equation}

\end{itemize}
These variations are compared directly in Section~\ref{sec: stability and bifurcation}.

Realistic ranges for the parameters are shared with those given in Table 1 of \cite{casagrandi2006} for Influenza A. Unless otherwise noted, the base parameter values used throughout the numerical results of this paper follow those of Figure 2 of \cite{casagrandi2006}: $\mu=0.02$, $\alpha=365/3$, $\delta=1.6^{-1}$, $\sigma=0.07874$, $\gamma=0.35$, with $\beta_0$ and the mutation feedback parameters $\eps_1,\eps_2$ 
varied as indicated in each figure. 

\begin{figure}
    \centering
    \includegraphics[width=0.9\textwidth]{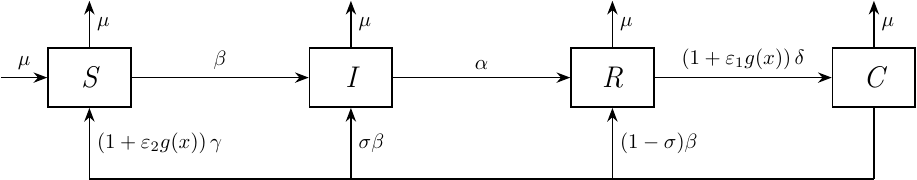}
    \caption{(see Section \ref{sec:model}) Flow diagram of the SIRCm model}
    \label{fig:flowdiagram}
\end{figure}

\subsection{Preliminaries}\label{sec:prelim}

The state space for the SIRCm model is the set
\[
  \Om \;\coloneqq\; \{ 
  (S,I,R,C) \in 
  [0,1]^4\;:\; S+I+R+C = 1\}
\]
equipped with the standard Euclidean metric.
We note its properties for future reference.
\begin{prop}\label{prop:state-space}
The state space $\Om$ is non-empty, compact, and convex.
\end{prop}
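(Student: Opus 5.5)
The plan is to verify the three properties directly from the definition of $\Om$ as the standard 3-simplex in $\R^4$:
\[
\Om=\{x\in[0,1]^4 : \textstyle\sum_i x_i=1\}.
\]

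\emph{Non-emptiness.} I will exhibit an explicit point. For instance, $(1,0,0,0)\in\Om$; this is the disease-free state with $S=1$.

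\emph{Compactness.} I will write $\Om$ as the intersection of the cube $[0,1]^4$ with the hyperplane
\[
H=\{x\in\R^4 : \ell(x)=1\}, \qquad \ell(x)=S+I+R+C.
\]
The cube is closed and bounded. The hyperplane $H$ is closed, being the preimage of the closed set $\{1\}$ under the continuous linear map $\ell$. Hence $\Om$ is a closed subset of a bounded set, and it is compact by the Heine--Borel theorem in $\R^4$ with the Euclidean metric.

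\emph{Convexity.} Both $[0,1]^4$ and $H$ are convex. The cube is convex as a product of intervals, and the hyperplane is convex as an affine set. The intersection of convex sets is convex. To check directly, take $x,y\in\Om$ and $\lambda\in[0,1]$. Each coordinate of $\lambda x+(1-\lambda)y$ is a convex combination of numbers in $[0,1]$, so it lies in $[0,1]$. By linearity,
\[
\ell\bigl(\lambda x+(1-\lambda)y\bigr)=\lambda\,\ell(x)+(1-\lambda)\,\ell(y)=\lambda+(1-\lambda)=1 .
\]

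There is no real obstacle here. The only point requiring care is to note explicitly that the constraint $S+I+R+C=1$ defines a closed set, which follows from continuity of $\ell$. This is the step that makes compactness, rather than mere boundedness, hold.
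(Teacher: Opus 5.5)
Your proof is correct. The paper states this proposition without any proof and treats it as evident, so your argument supplies the standard details it leaves implicit: the explicit point $(1,0,0,0)$, closedness of $[0,1]^4\cap\{S+I+R+C=1\}$ together with boundedness and Heine--Borel, and convexity as an intersection of convex sets.
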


\begin{prop}\label{prop:forward-invariance}
    $\Om$ is forward invariant: for any initial condition $x(0)\in\Om$, the unique solution of \eqref{eq:SIRCm} 
    satisfies $x(t) \in \Om$ for all $t \geq 0$.
\end{prop}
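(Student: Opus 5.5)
The plan has three parts: local existence and uniqueness, conservation of the total population, and nonnegativity of each compartment. Together with local existence, these confine the solution to the compact set $\Om$ and yield a global solution there.

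First, the right-hand side of \eqref{eq:SIRCm} is polynomial in $(S,I,R,C)$ for both choices $g(x)=SI$ and $g(x)=I$. It is therefore locally Lipschitz (indeed $C^\infty$) on $\R^4$, and Picard--Lindel\"of gives a unique maximal solution on some interval $[0,T_{\max})$. Summing the four equations, the transmission terms $\beta SI$, $\sigma\beta CI$, $(1-\sigma)\beta CI$, $\beta CI$, the flows $\alpha I$ and $(1+\eps_1 g)\delta R$, and $(1+\eps_2 g)\gamma C$ all cancel in pairs. This leaves
\[
  \tfrac{d}{dt}(S+I+R+C) = \mu\bigl(1-(S+I+R+C)\bigr),
\]
so $N\coloneqq S+I+R+C$ satisfies $N(t)-1=(N(0)-1)e^{-\mu t}$, and $N(0)=1$ forces $N(t)\equiv 1$.

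The main step is nonnegativity. I would use the standard quasi-positivity argument: each component's derivative is nonnegative whenever that component vanishes while the others are nonnegative.
\begin{itemize}
\item At $S=0$ we have $\dot S=\mu+(1+\eps_2 g)\gamma C$.
\item At $R=0$ we have $\dot R=(1-\sigma)\beta CI+\alpha I$.
\item At $C=0$ we have $\dot C=(1+\eps_1 g)\delta R$.
\end{itemize}
All three are $\ge 0$ provided $g\ge0$, which holds on the nonnegative orthant for both choices of $g$.

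For $I$, I would argue directly: $\dot I = I\,h(t)$ with $h=\beta S+\sigma\beta C-(\mu+\alpha)$ continuous. Hence $I(t)=I(0)\exp\int_0^t h$, which has the sign of $I(0)$ and so stays $\ge0$.

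To make the boundary argument rigorous without strictness issues, I would invoke the standard invariance result for quasi-positive vector fields: if $f_i(x)\ge0$ whenever $x_i=0$ and $x\ge0$, then the orthant is forward invariant. Should a self-contained proof be preferred, one perturbs to $\dot x=f(x)+\eta\mathbf 1$, obtains strict positivity at first exit times, and lets $\eta\to0$ using continuous dependence. A subtle point is that $g$ and the factors $(1+\eps_i g)$ must remain nonnegative along the argument. This is automatic as long as the state stays in the orthant, which is exactly what is being propagated.

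Combining the parts, nonnegativity and $N\equiv1$ give each component in $[0,1]$, so $x(t)\in\Om$ for $t<T_{\max}$. Since $\Om$ is compact by Proposition~\ref{prop:state-space}, the solution cannot blow up, hence $T_{\max}=\infty$. I expect the rigorous handling of the boundary in the nonnegativity step to be the only real obstacle; the rest is direct computation.
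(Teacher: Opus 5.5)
Your proposal is correct and follows the same route as the paper. Both use conservation of $S+I+R+C$ through $\dot P=\mu(1-P)$, quasi-positivity of the vector field on the orthant with the explicit exponential formula for $I$, Nagumo's invariance theorem, and global existence from compactness of $\Om$ via the extension theorem. Your optional perturbation $\dot x=f(x)+\eta\mathbf 1$ is simply a self-contained proof of the Nagumo step that the paper cites.
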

\begin{proof}
    \emph{Conservation.} Let $P \coloneqq S+I+R+C$. Summing the four equations of \eqref{eq:SIRCm} 
    yields 
    $
      \dot P = \mu(1-P)
    $.
    The solution of this linear scalar ODE is $P(t) = 1 + (P(0)-1)e^{-\mu t}$. In particular, $P(0) = 1 \Longrightarrow P(t) = 1$ for all $t \geq 0$.\\
       \emph{Non-negativity.} We verify that the vector field is quasi-positive on  
       $[0,+\infty)^4$, i.e., $\dot x_{i} \geq 0$ whenever $x_{i} = 0$ ($1\le i\le 4$) and the remaining coordinates are non-negative.
    \begin{itemize}
      \item For $I$: $\dot I = I[\beta S + \sigma\beta C - (\mu+\alpha)]$ is linear in $I$, with solution
      \[
      I(t) = I(0)\,\exp\!\left(\int_{0}^{t} 
        [\beta S(u) + \sigma\beta C(u) - (\mu+\alpha)]\,du\right).
      \]
      Since the exponential factor is strictly positive, $I(0) \geq 0 \Longrightarrow I(t) \geq 0$ for all $t \geq 0$.      
      \item For $S$: $\dot S|_{S=0} = \mu + (1+\eps_2g(x))\gamma C > 0$, since $g(x)\geq 0$.
      \item For $R$: $\dot R|_{R=0} = (1-\sigma)\beta CI + \alpha I \geq 0$.
      \item For $C$: $\dot C|_{C=0} = (1+\eps_1g(x))\delta R \geq 0$, since $g(x)\geq 0$.
    \end{itemize}
    
    Nagumo's theorem \citep{blanchini1999} then implies $x(t) \ge 0$, and---together with the conservation property---$x(t) \in \Om$ for all $t \geq 0$ for which the solution of \eqref{eq:SIRCm} is defined. 
    
    The right-hand sides of \eqref{eq:SIRCm} are polynomials in $(S,I,R,C)$, hence they are locally Lipschitz and continuous functions, guaranteeing local existence and uniqueness of the corresponding initial-value problem. Finally, from the extension theorem for ODEs, we get that the solution $x(t)$ is defined for \textit{all} $t\ge 0$, since the solution does not leave the compact set $\Om$.
\end{proof}

We assume that mutation feedback is related to infections, with the following conditions.
\begin{dfn}\label{dfn:funcg}
Let $g$ be a $\mathcal{C}^\infty$ function on $\Om$,  satisfying:
\begin{enumerate}
    \item $g(x)$ depends exclusively on the state variables $S$ and $I$;
    \item $g\geq 0$ on $\Om$;
    \item $I=0 \implies g=0$;
    \item $g=0 \implies SI=0$.
\end{enumerate}
\end{dfn}

The \textit{basic reproduction number}, denoted by $\BRN$, is the expected number of secondary infections generated by a single infectious individual in an otherwise fully susceptible population.
The SIRCm model has the same basic reproduction number as the SIRC model:
\begin{equation}\label{R0definition}
\BRN \coloneqq \frac{\beta}{\mu+\alpha}.
\end{equation}
It also has the same
disease-free equilibrium as the SIRC model,
with the same stability properties.
Here and throughout this work,
$J(x)$ denotes the Jacobian matrix of the right-hand side of \eqref{eq:SIRCm}.

\begin{prop}\label{prop:DFE}
    SIRCm \eqref{eq:SIRCm} has a unique disease-free equilibrium (DFE) on 
    \begin{equation}\label{Omegazerodefinition}\Omzero\coloneqq \{x\in\Om : I=0\},\end{equation}
    given by
    $
      \DFE \coloneqq (1,0,0,0).
    $
    Setting $\BRN \coloneqq \beta/(\mu+\alpha)$, $\DFE$ is hyperbolic whenever $\BRN \neq 1$, locally asymptotically stable when $\BRN < 1$, and unstable when $\BRN > 1$. The eigenvalues of $J(\DFE)$  are
    \[
      \{-\alpha+\beta-\mu,\ -\delta(1+g(\DFE)\eps_1)-\mu,\ -\gamma(1+g(\DFE)\eps_2)-\mu,\ -\mu\}.
    \]  
    For $\BRN > 1$, the three stable eigendirections span the 3-dimensional subspace  $\{x\in\mathbb{R}^4: I=0\}$.
\end{prop}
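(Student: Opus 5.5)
We argue in three steps: identify the equilibria on $\Omzero$, compute the Jacobian there, and read off stability and the stable subspace.

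\emph{Equilibria on $\Omzero$.} Setting $I=0$ and using Definition~\ref{dfn:funcg}(3), we have $g=0$ on $\Omzero$. The system restricted to $\Omzero$ is therefore linear:
\[
\dot S=\mu(1-S)+\gamma C,\qquad \dot R=-(\mu+\delta)R,\qquad \dot C=\delta R-(\mu+\gamma)C .
\]
At an equilibrium, $\dot R=0$ forces $R=0$, since $\mu+\delta>0$. Then $\dot C=0$ forces $C=0$, and $\dot S=0$ forces $S=1$. This gives the unique equilibrium $\DFE=(1,0,0,0)$, which lies in $\Om$. Conversely, $\DFE$ is an equilibrium of the full system because $\dot I=0$ whenever $I=0$.

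\emph{Jacobian at $\DFE$.} We differentiate \eqref{eq:SIRCm} at $\DFE$, where $I=R=C=0$ and $S=1$. Every term involving $g$ has the form $\eps_k g(x)\gamma C$ or $\eps_k g(x)\delta R$. Its derivative at $\DFE$ is $\eps_k g(\DFE)\gamma$ (respectively $\eps_k g(\DFE)\delta$) in the $C$ (respectively $R$) column. The derivatives of $g$ are multiplied by $C=0$ or $R=0$, so they drop out. The row for $I$ is $(0,\ \beta-\mu-\alpha,\ 0,\ 0)$, because $\partial_S(\beta SI)=\beta I=0$ and $\partial_C(\sigma\beta CI)=0$. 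Expanding the characteristic polynomial along the $I$-row gives the factor $\lambda-(\beta-\mu-\alpha)$. The remaining $3\times3$ block in $(S,R,C)$ is upper triangular in the ordering $(S,C,R)$:
\begin{itemize}
\item the $S$-row has diagonal entry $-\mu$ (plus off-diagonal terms);
\item the $C$-row has diagonal entry $-\mu-\gamma(1+\eps_2 g(\DFE))$;
\item the $R$-row has only the diagonal entry $-\mu-\delta(1+\eps_1 g(\DFE))$.
\end{itemize}
This yields exactly the four stated eigenvalues. (Of course $g(\DFE)=0$, but we keep the stated form.)

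\emph{Stability.} The three eigenvalues $-\mu$, $-\mu-\gamma(\cdot)$ and $-\mu-\delta(\cdot)$ are strictly negative. The sign of the remaining eigenvalue $\beta-\mu-\alpha=(\mu+\alpha)(\BRN-1)$ matches that of $\BRN-1$. Hence $\DFE$ is hyperbolic iff $\BRN\neq1$. By linearization (Hartman--Grobman), it is locally asymptotically stable for $\BRN<1$ and unstable for $\BRN>1$.

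\emph{Stable eigenspace.} The column of $J(\DFE)$ for $I$ is the only one with a nonzero $I$-component. Every other column has zero in the $I$-entry, so the hyperplane $\{I=0\}$ is invariant under $J(\DFE)$. The restriction of $J(\DFE)$ to this hyperplane is the $3\times3$ block above, with the three negative eigenvalues. When $\BRN>1$, these are exactly the stable eigenvalues, so the stable eigenspace is $\{x\in\R^4: I=0\}$.

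The only delicate point is checking that derivatives of $g$ do not contribute to the Jacobian; this is handled by the vanishing of $R$ and $C$ at $\DFE$.
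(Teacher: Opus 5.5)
Your proof is correct and complete. The paper states Proposition~\ref{prop:DFE} without proof, treating it as a routine computation, so there is no argument of its own to compare against. Your reduction to the linear triangular system on $\Omzero$ (using $g=0$ when $I=0$) is the same one the paper later uses in the proof of Proposition~\ref{prop:DFE-attractor}. Your identification of $\{I=0\}$ as the stable subspace, via its invariance under $J(\DFE)$, is exactly the fact the paper invokes in Lemma~\ref{prop:repelling}.

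One point in your favor concerns the degenerate case $\delta=\gamma$. There the eigenvalue $-\mu-\gamma$ is double but has only one eigenvector. The statement's phrase ``the three stable eigendirections span'' must then be read as referring to the generalized stable eigenspace. Your argument still works in that case, because it uses only that $\{I=0\}$ is a $3$-dimensional invariant subspace carrying exactly the three stable eigenvalues.
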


\section{The existence of an endemic equilibrium via persistence theory}
\label{sec: persistence theory}
In this section, we show the existence of a second equilibrium in the interior of $\Om$ (denoted by $\mathrm{int}(\Om)$) whenever the basic reproduction number satisfies $\BRN > 1$.

By eliminating $S,R,C$ from the equilibrium equations of SIRCm \eqref{eq:SIRCm}, we get a polynomial equation $\poly(I,\eps_1, \eps_2)=0$, quartic in $I$; the explicit form of $\poly$ is given in Appendix \ref{app:quartic}. A root of $\poly(\cdot,\eps_1, \eps_2)$ in $(0,1]$ would yield an endemic equilibrium provided that $(S,I,R,C)\in \Om$.
However, by using standard algebraic techniques, we were unable to prove the existence of such an equilibrium in the simplex $\Om$, 
     even when we reduced the dimensionality of the state space to a 3-dimensional tetrahedron by eliminating one of the variables from the conservation property $S+I+R+C=1$.  

For this reason, we turn 
to the 
persistence-theoretic framework  
of \cite{SmithThieme}.     
Here, we denote the solution operator for \eqref{eq:SIRCm} by $\Phi(t, x_0) \coloneqq x(t)$, and denote the time-$t$ map by $\Phi_t(\cdot) \coloneqq \Phi(t, \cdot)$.
 
\begin{thm}\label{prop:endemic-equilibrium}
Assume the basic reproduction number satisfies $\BRN > 1$. Then, for any $\eps_{1},\eps_{2}\geq0$, SIRCm \eqref{eq:SIRCm} has an endemic equilibrium: there exists $x^{*} \in \Om$ with $\rho(x^{*}) = I^{*} > 0$ and $\Phit(t,x^{*}) = x^{*}$ for all $t \geq 0$.
\end{thm}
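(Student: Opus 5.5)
We will use the persistence framework of \cite{SmithThieme}, with persistence function $\rho(x)\coloneqq I$ on $\Om$. The target is their result that a uniformly persistent dissipative semiflow has an interior fixed point. Concretely, if $\Phi$ is a continuous semiflow on the closed convex set $\Om$ that is $\rho$-uniformly persistent and has a compact attractor of points, then there is an equilibrium $x^*$ with $\rho(x^*)>0$.

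The first step collects the structural hypotheses. By Propositions \ref{prop:state-space} and \ref{prop:forward-invariance}, $\Om$ is compact, convex and forward invariant, and $\Phi$ is continuous in $(t,x_0)$ because the vector field is polynomial. Compactness makes $\Phi$ point dissipative and eventually bounded, so there is a compact attractor of compact sets, namely the global attractor $\bigcap_{t\ge0}\Phi_t(\Om)$. The set $\{x\in\Om: I>0\}$ is forward invariant by the exponential formula for $I(t)$ in the proof of Proposition \ref{prop:forward-invariance}.

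The second step is uniform weak persistence, which is the main obstacle. Suppose, for contradiction, that $\limsup_{t\to\infty} I(t)<\eta$ for some solution with $I(0)>0$, where $\eta$ is small. Then the flow is near $\Omzero$. On $\Omzero$ we have $g=0$ by Definition \ref{dfn:funcg}(3), and the dynamics reduce to a linear cascade $R\to C\to S$ plus births, so every trajectory in $\Omzero$ converges to $\DFE$. Since $g$ is continuous and vanishes at $I=0$, once $I<\eta$ for all large $t$ we get $\dot S\ge \mu(1-S)-\beta\eta S$. Hence $\liminf S\ge \mu/(\mu+\beta\eta)$, which is close to $1$ for small $\eta$. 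We then use
\[
\dot I\ge I\bigl(\beta S-(\mu+\alpha)\bigr),
\]
and $\BRN>1$ gives $\beta S-(\mu+\alpha)\ge c>0$ eventually once $\eta$ is small enough. So $I$ grows exponentially, which is a contradiction. This argument does not need the Butler--McGehee machinery because the $S$-equation alone is enough.

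The third step upgrades weak to strong persistence and concludes. Smith--Thieme's theorem gives uniform strong $\rho$-persistence from uniform weak persistence under two conditions. The first is a compact attractor, which we have. The second is that $\rho(\Phi(t,x))>0$ for $t>0$ whenever $\rho(x)>0$, and that $\rho$ is continuous, both of which hold here. With uniform strong persistence, dissipativity, and convexity of $\Om$ in hand, we invoke Smith--Thieme's existence theorem for $\rho$-positive equilibria. In the finite-dimensional setting this can be viewed as a Brouwer-type fixed-point argument applied to the time-$t$ maps on the persistence attractor. The conclusion is an $x^*\in\Om$ with $I^*>0$ and $\Phi(t,x^*)=x^*$ for all $t\ge0$.

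What remains to check is the technical hypothesis of that theorem, for instance that the persistence attractor is compact and the semiflow is asymptotically smooth. Both are automatic here because $\Om$ is compact.
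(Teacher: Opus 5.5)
Your proposal is correct, but it reaches uniform weak persistence by a genuinely different and more elementary route than the paper. The last steps match the paper: Smith--Thieme Theorem 4.5 gives strong persistence and Theorem 6.2 gives the fixed point.

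The paper derives weak persistence from the acyclicity theorem (Smith--Thieme, Theorem 8.17), which needs three facts:
\begin{itemize}
\item every orbit in $\Omzero$ tends to $\DFE$;
\item $\{\DFE\}$ is isolated, via Hartman--Grobman and hyperbolicity for $\BRN\neq1$;
\item $\{\DFE\}$ is weakly $\rho$-repelling, via the local stable manifold lying in $\{I=0\}$.
\end{itemize}
You compare directly instead. If $I<\eta$ eventually, the inflow $(1+\eps_2 g)\gamma C$ is nonnegative, so $\dot S\ge\mu-(\mu+\beta\eta)S$ and hence $\liminf S\ge\mu/(\mu+\beta\eta)$. Then $\dot I\ge I(\beta S-(\mu+\alpha))$ forces exponential growth, a contradiction.

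Your route gives two things the paper's does not. First, an explicit constant: the contradiction works for every $\eta<\frac{\mu}{\mu+\alpha}(1-\BRN^{-1})$. So $\limsup_{t\to\infty} I(t)\ge\frac{\mu}{\mu+\alpha}(1-\BRN^{-1})$ whenever $I(0)>0$, which matches the lower bound on $I^*$ in Lemma~\ref{lemma:bounds EE}. Second, uniformity in $\eps_1,\eps_2$ and in the choice of $g$, since only $g\ge0$ is used. Your remarks about $g$ vanishing near $\Omzero$ and about boundary orbits converging to $\DFE$ play no role and can be dropped. The paper's machinery is heavier here, but it is what you would need if $\Omzero$ carried several, possibly cyclically chained, invariant sets, or if no such sign structure were available.

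Two small corrections to your final step:
\begin{itemize}
\item Theorem 6.2 also requires $\rho$ to be concave, which is trivial since $\rho=I$ is linear.
\item It also requires the no-return condition (H1) on total trajectories. This follows from the forward invariance of $\{I>0\}$ that you already noted.
\end{itemize}
Also, your description of the fixed point as a Brouwer argument on the persistence attractor is not accurate, because that attractor need not be convex. The convexity the theorem uses comes from $\Om$ and from the concavity of $\rho$.
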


Our strategy for establishing Theorem \ref{prop:endemic-equilibrium} relies on verifying the hypotheses of the fixed-point theorem \citet[Theorem 6.2]{SmithThieme}. As a first step, we show that $\Phi$ generates a continuous semiflow on $\Omega$ and possesses a global compact attractor.
Let $\mathcal{A} \coloneqq \omega(\Omega)$ denote the $\omega$-limit set of $\Omega$. We have the following basic properties.

\begin{prop}\label{prop:semiflow}
     $\Phit : [0,+\infty) \times \Om \to \Om$ is a continuous semiflow on $\Om$ \citet[Definition 1.33]{SmithThieme}.
\end{prop}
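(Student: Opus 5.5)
To prove Proposition~\ref{prop:semiflow}, I will check the three defining properties of a continuous semiflow in \citet[Definition 1.33]{SmithThieme}: $\Phi_0=\mathrm{id}$, the semigroup property $\Phi_{t+s}=\Phi_t\circ\Phi_s$, and joint continuity of $(t,x_0)\mapsto\Phi(t,x_0)$ on $[0,+\infty)\times\Om$. Proposition~\ref{prop:forward-invariance} already supplies most of the ingredients. For every $x_0\in\Om$ the solution exists, is unique, is defined for all $t\ge 0$, and remains in $\Om$. Hence $\Phi$ is a well-defined map $[0,+\infty)\times\Om\to\Om$.

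The identity property holds by definition, since $\Phi(0,x_0)=x(0)=x_0$. For the semigroup property I will use autonomy of \eqref{eq:SIRCm} together with uniqueness. Fix $s\ge0$ and set $y(t)\coloneqq\Phi(t+s,x_0)$. Then $y$ solves \eqref{eq:SIRCm}, because the right-hand side does not depend on $t$, and $y(0)=\Phi(s,x_0)\in\Om$. By uniqueness of solutions of the initial-value problem (the vector field is polynomial, hence locally Lipschitz), $y(t)=\Phi(t,\Phi(s,x_0))$ for all $t\ge0$.

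The only step requiring an argument beyond bookkeeping is joint continuity. First I will note that $g$ is $\mathcal{C}^\infty$ on $\Om$ by Definition~\ref{dfn:funcg}; for the two concrete choices \eqref{t-dv_formula}, \eqref{p-dv_formula} it is a polynomial. So the vector field $F$ extends to a $\mathcal{C}^1$ (indeed polynomial) field on a neighbourhood of $\Om$ in $\R^4$, and is therefore Lipschitz with some constant $L$ on a compact neighbourhood $K$ of $\Om$. Then, for $x_0,y_0\in\Om$, both trajectories stay in $\Om\subset K$, and Gronwall's inequality gives
\[
|\Phi(t,x_0)-\Phi(t,y_0)|\le e^{Lt}|x_0-y_0|.
\]
Moreover $|\Phi(t,x_0)-\Phi(t',x_0)|\le M|t-t'|$ with $M\coloneqq\max_{\Om}|F|<\infty$, which is finite by compactness (Proposition~\ref{prop:state-space}). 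Combining these two estimates via the triangle inequality yields
\[
|\Phi(t,x_0)-\Phi(t',y_0)|\le M|t-t'|+e^{Lt}|x_0-y_0|,
\]
which gives joint continuity on $[0,T]\times\Om$ for every $T$, hence on all of $[0,+\infty)\times\Om$. Alternatively, I could simply invoke the standard theorem on continuous dependence on initial data together with global existence.

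I expect no real obstacle here. The only subtlety is making sure that $g$, which is defined only on $\Om$, is regular enough to give a Lipschitz field. This can be avoided either by using the explicit polynomial forms of $g$ or by restricting all estimates to the forward-invariant set $\Om$, where the solutions remain anyway.
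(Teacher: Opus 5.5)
Your proof is correct. The paper gives no separate argument for this proposition; it treats it as an immediate consequence of the global existence, uniqueness and forward invariance in Proposition~\ref{prop:forward-invariance} together with standard continuous dependence on initial data. Your verification of $\Phi_0=\mathrm{id}$, of the semigroup property via autonomy and uniqueness, and of joint continuity via the Gronwall and time-Lipschitz estimates supplies exactly those omitted details.
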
 

\begin{prop}\label{prop:compact-attractor}
    $\mathcal{A}$ is a non-empty, compact, invariant subset of $\Om$ that attracts every subset of $\Om$; that is, $\mathcal{A}$ is a compact attractor of neighborhoods of compact sets in $\Om$ \citet[Definition 2.22]{SmithThieme}.
\end{prop}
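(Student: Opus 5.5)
The statement to prove is Proposition~\ref{prop:compact-attractor}: $\mathcal{A}=\omega(\Om)$ is a non-empty, compact, invariant compact attractor of neighborhoods of compact sets. The plan is to exploit that the whole state space is compact and forward invariant, so the semiflow is trivially point dissipative and eventually bounded. Then invoke the standard existence theorem for compact attractors in \citet{SmithThieme}, which says that the $\omega$-limit set of a compact, forward invariant absorbing set is a compact attractor.

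\textbf{Step 1 (absorbing set).} By Proposition~\ref{prop:state-space}, $\Om$ is compact. By Proposition~\ref{prop:forward-invariance}, $\Phi_t(\Om)\subseteq\Om$ for all $t\ge0$. Hence $\Om$ itself is a compact set that absorbs every subset of $\Om$ at time $0$. In particular, $\Phi$ is point dissipative and asymptotically smooth, since every forward orbit of a bounded set lies in the compact set $\Om$.

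\textbf{Step 2 (structure of $\omega(\Om)$).} Recall
\[
\omega(\Om)=\bigcap_{s\ge0}\overline{\bigcup_{t\ge s}\Phi_t(\Om)}.
\]
The sets $K_s \coloneqq \overline{\bigcup_{t\ge s}\Phi_t(\Om)}$ are non-empty, closed subsets of the compact set $\Om$, and they are nested decreasing in $s$. By Cantor's intersection theorem, $\mathcal{A}$ is non-empty and compact.

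For invariance $\Phi_t(\mathcal{A})=\mathcal{A}$, use the sequential characterization: $y\in\mathcal{A}$ if and only if $y=\lim \Phi_{t_n}(x_n)$ for some $x_n\in\Om$ and $t_n\to\infty$. Forward invariance then follows from continuity of $\Phi_t$ (Proposition~\ref{prop:semiflow}). For backward invariance, write $\Phi_{t_n}(x_n)=\Phi_t(\Phi_{t_n-t}(x_n))$ and extract a convergent subsequence of $\Phi_{t_n-t}(x_n)$ in the compact set $\Om$; its limit lies in $\mathcal{A}$.

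\textbf{Step 3 (attraction).} Argue by contradiction. Suppose there exist $\eta>0$, $t_n\to\infty$ and $x_n\in\Om$ with $\operatorname{dist}(\Phi_{t_n}(x_n),\mathcal{A})\ge\eta$. By compactness of $\Om$, a subsequence of $\Phi_{t_n}(x_n)$ converges. Its limit lies in $\mathcal{A}$ by the sequential characterization, which is a contradiction.

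Thus $\mathcal{A}$ attracts $\Om$, and hence every subset of $\Om$, including neighborhoods of compact sets. This is exactly \citet[Definition 2.22]{SmithThieme}.

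The only delicate point is Step 2's backward invariance, which needs the compactness extraction described there. It is routine because the entire phase space is compact. Alternatively, the whole proposition can be cited directly from the theorem in \citet{SmithThieme} asserting that a semiflow with a compact absorbing set has a compact attractor given by its $\omega$-limit.
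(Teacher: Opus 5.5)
Your proposal is correct and follows essentially the same route as the paper. In both, compactness and forward invariance of $\Om$ make every sequence $\Phi_{t_n}(x_n)$ with $t_n\to\infty$ subconvergent (asymptotic compactness), and non-emptiness, compactness, invariance and attraction of $\omega(\Om)$ follow from this; the only difference is that the paper cites Smith--Thieme's Proposition~2.10 and Theorem~2.11 for these consequences, whereas you verify them directly using Cantor's intersection theorem and the sequential characterization of $\omega$-limit sets.
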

\begin{proof}
    Since $\Om$ is compact (Proposition~\ref{prop:state-space}) and forward invariant under $\Phit$ (Proposition~\ref{prop:forward-invariance}), every trajectory starting in $\Om$ remains in $\Om$, hence any sequence $\Phit(t_i, x_i)$ with $t_{i}\to\infty$ and $x_{i}\in\Om$ has a convergent subsequence. Therefore, $\Phit$ is asymptotically compact on $\Om$ \citet[Definition 2.9]{SmithThieme}.
    By  
    \citet[Proposition 2.10]{SmithThieme}, $\mathcal{A}$ 
    is non-empty, compact, 
    and attracts $\Om$. 
    Invariance follows from \citet[Theorem 2.11]{SmithThieme}, making $\mathcal{A}$ a compact attractor of neighborhoods of compact sets in $\Omega$ \citet[Definition 2.22]{SmithThieme}.
\end{proof}

To apply the persistence criteria of \cite{SmithThieme}, we measure the distance from disease extinction using the persistence function
$\rho : \Om \to [0,+\infty)$ defined by 
\begin{equation}\label{para:persistence-function}
    \rho(S,I,R,C) \coloneqq I,
\end{equation}
whose zero-set coincides with the extinction set $\Omega_0 = \rho^{-1}(\{0\})$ introduced in \eqref{Omegazerodefinition}. Note that $\rho$ is continuous and concave on $\Omega$.  

In Propositions \ref{prop:Omega0-invariant}--\ref{prop:DFE-attractor} below, we first show that all boundary trajectories in $\Omega_0$ inevitably collapse to the disease-free equilibrium 
$\DFE$, rendering the boundary dynamics acyclic.

\begin{prop}\label{prop:Omega0-invariant}
    The extinction set $\Omzero$ is forward invariant under the SIRCm \eqref{eq:SIRCm} flow.
\end{prop}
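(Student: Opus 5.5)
The plan is to reuse the explicit formula for $I$ obtained in the proof of Proposition~\ref{prop:forward-invariance}. Take $x(0)\in\Omzero$, so $x(0)\in\Om$ and $I(0)=0$. By Proposition~\ref{prop:forward-invariance} the solution exists for all $t\ge 0$ and stays in $\Om$. In particular $S(u)$ and $C(u)$ are continuous and bounded on $[0,t]$ for every $t$. The $I$-equation \eqref{eq:SIRCm I} is linear in $I$, with coefficient $a(u)\coloneqq \beta S(u)+\sigma\beta C(u)-(\mu+\alpha)$, which is continuous along the trajectory. Therefore
\[
I(t)=I(0)\exp\!\left(\int_0^t a(u)\,du\right).
\]
The integral is finite, so the exponential factor is a finite positive number. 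Setting $I(0)=0$ then gives $I(t)=0$ for all $t\ge 0$. Combined with $x(t)\in\Om$, this yields $x(t)\in\Omzero$, which is the claim.

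The only subtle point is that the formula is a consequence of uniqueness, not something one may simply assume. To justify it, I would treat $S(\cdot)$ and $C(\cdot)$ along the actual solution as known continuous functions. Then $I$ solves the scalar linear ODE $\dot I=a(t)I$, and the displayed expression is its unique solution with the given initial value.

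An equivalent route, if that feels cleaner, is to set $I\equiv 0$ and solve the reduced three-dimensional system
\[
\dot S=\mu(1-S)+\gamma C,\quad \dot R=-(\mu+\delta)R,\quad \dot C=\delta R-(\mu+\gamma)C,
\]
where I use $g=0$ whenever $I=0$ (Definition~\ref{dfn:funcg}, item 3). This produces a solution of the full system \eqref{eq:SIRCm} starting at $x(0)$. By uniqueness (local Lipschitz continuity of the polynomial vector field) it coincides with $\Phit(t,x(0))$, so $I\equiv 0$ along the flow.

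I expect no real obstacle here. The one thing to check is that the conservation law and non-negativity carry over to this reduced solution, so that it lies in $\Omzero\subset\Om$. Both already follow from Proposition~\ref{prop:forward-invariance}.
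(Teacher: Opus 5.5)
Your proof is correct and follows the paper's reasoning. The paper gives no separate proof of this proposition; it treats it as immediate from the representation $I(t)=I(0)\exp\bigl(\int_0^t[\beta S(u)+\sigma\beta C(u)-(\mu+\alpha)]\,du\bigr)$ in the proof of Proposition~\ref{prop:forward-invariance}, combined with forward invariance of $\Om$, which is exactly your first route (your second route via the reduced linear system on $\Omzero$ and uniqueness is also consistent with what the paper uses later).
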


\begin{prop}\label{prop:DFE-attractor}
    Every trajectory in $\Omzero$ converges to the disease-free equilibrium $\DFE$. Consequently, $\{\DFE\}$ is 
    the global compact attractor of the boundary semiflow $\Phit|_{\Omzero}$, 
    and the unique non-empty compact invariant subset of $\Omzero$. 
\end{prop}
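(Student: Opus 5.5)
On $\Omzero$ we have $I\equiv 0$, so by property 3 of Definition~\ref{dfn:funcg} the feedback vanishes, $g\equiv 0$. The plan is therefore to reduce the flow on $\Omzero$ to a linear cascade and read off convergence. By Proposition~\ref{prop:Omega0-invariant}, a trajectory starting in $\Omzero$ keeps $I(t)=0$ for all $t\ge 0$, and the remaining equations become
\[
\dot S=\mu(1-S)+\gamma C,\qquad \dot R=-(\mu+\delta)R,\qquad \dot C=\delta R-(\mu+\gamma)C .
\]
This system is linear and triangular, so we solve it successively. First, $R(t)=R(0)e^{-(\mu+\delta)t}\to 0$ exponentially. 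Next, $C$ solves a stable scalar linear ODE driven by the exponentially decaying input $\delta R$, so variation of constants gives $C(t)\to 0$ exponentially; concretely, $|C(t)|\le K e^{-\kappa t}$ with $\kappa=\min(\mu+\gamma,\mu+\delta)/2$, which avoids the resonant case $\gamma=\delta$. Finally, $S$ can be handled either through the conservation law $S=1-R-C$ (since $I=0$ and $S+I+R+C=1$), or directly from its equation, which has the input $\gamma C\to 0$. In either case $S(t)\to 1$. Hence $\Phit(t,x)\to\DFE$ for every $x\in\Omzero$, and the convergence is uniform on $\Omzero$, because the constants depend only on bounds of the initial data, which all lie in $[0,1]$.

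The second step is attractivity. Uniform exponential convergence gives $\sup_{x\in\Omzero}\|\Phit(t,x)-\DFE\|\le K' e^{-\kappa t}$, so $\{\DFE\}$ attracts all of $\Omzero$, and in particular neighborhoods of compact sets within $\Omzero$. It is compact and invariant, since it is an equilibrium by Proposition~\ref{prop:DFE}. It is therefore the global compact attractor of $\Phit|_{\Omzero}$ in the sense of \citet[Definition 2.22]{SmithThieme}.

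The last step is uniqueness of the compact invariant set. Let $K\subseteq\Omzero$ be non-empty, compact and invariant. Invariance means $\Phit(K)=K$ for all $t$, so every $y\in K$ lies on a total trajectory in $K$; that is, $y=\Phit(t,x_t)$ with $x_t\in K$ for every $t$. The uniform estimate then gives $\|y-\DFE\|\le K'e^{-\kappa t}$ for all $t$, and letting $t\to\infty$ yields $y=\DFE$. Hence $K=\{\DFE\}$.

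The main point requiring care is exactly this last use of invariance: pointwise convergence of forward orbits alone would not exclude, for example, a heteroclinic-type invariant set. The argument closes this gap through the uniform bound together with backward extendability inside $K$. Everything else is routine linear ODE estimates.
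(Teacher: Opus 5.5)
Your proposal is correct and follows the paper's route: on $\Omzero$ the feedback vanishes, the system becomes the linear triangular cascade in $(R,C,S)$, and solving it sequentially gives $R,C\to0$ and $S\to1$. You go further on the two ``consequently'' claims, which the paper only asserts. Your uniform exponential bound, combined with $\Phi_t(K)=K$ for a compact invariant $K\subseteq\Omzero$, is exactly what those claims need, since pointwise convergence alone would not exclude, e.g., a homoclinic loop. The same bound can be obtained more cheaply from $\tfrac{d}{dt}(R+C)=-\mu(R+C)-\gamma C\le-\mu(R+C)$ together with $1-S=R+C$ on $\Omzero$.
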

\begin{proof}
    Since $I=0 \implies g=0$ (Definition \ref{dfn:funcg}), \eqref{eq:SIRCm} reduces on $\Omzero$ to the linear triangular system
    \[
      \dot R = -(\mu+\delta)R, \qquad
      \dot C = \delta R - (\mu+\gamma)C, \qquad
      \dot S = \mu(1-S) + \gamma C,
    \]
    subject to $S+R+C=1$. Solving 
    sequentially yields $R(t) \to 0$ and $C(t) \to 0$ as $t \to \infty$, which forces $S(t) \to 1$ by conservation.  
    Hence every trajectory starting in $\Omzero$ converges to $\DFE= (1, 0, 0, 0)$.

    As a consequence, the union of $\omega$-limit sets of boundary trajectories is simply $\tilde{\Omega} \coloneqq \bigcup_{x \in \Omega_0} \omega(x) = \{\DFE\}$. In the persistence framework of \citet[Section 8.3]{SmithThieme}, $\tilde{\Omega}$ is covered by the single invariant set $M_1 = \{\DFE\}$ (taking $k=1$). The required acyclicity condition \citet[Definition 8.14]{SmithThieme}  
    holds trivially because the associated directed graph consists of a single vertex $M_1$ with no self-loops.

\end{proof}

Having characterized the boundary dynamics on $\Omega_0$, we next turn to the behavior near the disease-free equilibrium $\DFE$. To apply persistence theory when $\BRN > 1$, we must show that $\DFE$ acts as a local repeller for initial states with positive infection. This requires two steps: establishing in Lemma \ref{prop:isolated} that $\{\DFE\}$ is locally isolated as an invariant set, and proving in Lemma \ref{prop:repelling} that no trajectory starting off the boundary can converge to $\DFE$.

\begin{lemma}\label{prop:isolated}
    Assume $\BRN \neq 1$. Then $\{\DFE\}$ is isolated in $\Om$ \citet[Definition 5.14]{SmithThieme}; that is, there exists a 
    neighborhood $V$ of $\DFE$ in $\Om$ such that every non-empty compact invariant set  
    $K \subset V$ satisfies $K = \{\DFE\}$.
\end{lemma}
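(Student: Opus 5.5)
We prove the lemma by linearization at $\DFE$, using the hyperbolicity recorded in Proposition~\ref{prop:DFE}. When $\BRN \neq 1$, every eigenvalue of $J(\DFE)$ is nonzero. Three of them, $-\mu$, $-(\mu+\delta)$ and $-(\mu+\gamma)$, are strictly negative; here we use $g(\DFE)=0$ from Definition~\ref{dfn:funcg}. The fourth is $\beta-\mu-\alpha = (\mu+\alpha)(\BRN-1)$. Hence $\DFE$ is a hyperbolic equilibrium of the smooth (polynomial) vector field in $\R^4$. We also restrict the flow to the invariant affine hyperplane $S+I+R+C=1$ (Proposition~\ref{prop:forward-invariance}). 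On it, the direction eliminated by conservation carries the eigenvalue $-\mu$ of the total-population mode $\dot P=\mu(1-P)$. The remaining three eigenvalues therefore govern the restricted flow, and $\DFE$ stays hyperbolic there.

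Next we invoke the standard fact that a hyperbolic equilibrium is an isolated invariant set. We would argue it directly, via Hartman--Grobman or the stable/unstable manifold theorem. Choose a neighborhood $U$ of $\DFE$ in which the flow is topologically conjugate to its linearization. In a linear hyperbolic flow, the only orbit that stays in a small ball for all $t\in\R$ is the origin: any point with a nonzero unstable component leaves forward in time, and any point with a nonzero stable component leaves backward in time. Now let $K\subset V$ be a nonempty compact invariant set, where $V\coloneqq U\cap\Om$. Invariance in the sense of \citet{SmithThieme} means $\Phi_t(K)=K$ for all $t\ge0$. Hence every $x\in K$ lies on a total trajectory contained in $K\subset U$, and the previous observation forces $x=\DFE$. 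So $K=\{\DFE\}$.

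If we want to avoid Hartman--Grobman, there is an elementary route using the block structure $J(\DFE)$. When $\BRN<1$, the equilibrium is locally asymptotically stable (Proposition~\ref{prop:DFE}), so a quadratic Lyapunov function on a small sublevel set shows that every total orbit staying inside converges backward to the boundary of that set, unless it is $\DFE$ itself. When $\BRN>1$, we use $\rho=I$. Near $\DFE$ we have $\dot I = I(\beta S+\sigma\beta C-\mu-\alpha) \ge \tfrac12(\beta-\mu-\alpha)I$, because $S$ is close to $1$. Thus any compact invariant $K\subset V$ satisfies $I\equiv0$ on $K$: otherwise $I$ grows along forward orbits without leaving $V$, which contradicts boundedness of $I$ on $K$ together with $I\le$ (small) on $V$. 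So $K\subset\Omzero$, and Proposition~\ref{prop:DFE-attractor} gives $K=\{\DFE\}$.

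The main obstacle is being careful with the boundary of $\Om$. Since $\DFE$ is a corner point of the simplex, the neighborhood must be taken relative to $\Om$, and we must check that the invariance notion of \citet{SmithThieme} gives full orbits through each point of $K$. Compactness and $\Phi_t(K)=K$ provide backward continuations inside $K$, so the argument in the previous paragraph goes through. For $\BRN>1$ we would favour the direct $I$-growth argument, since it stays entirely within $\Om$.
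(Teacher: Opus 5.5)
Your main argument is the paper's proof. It runs from hyperbolicity of $\DFE$ (Proposition~\ref{prop:DFE}), through Hartman--Grobman, to the fact that a hyperbolic linear flow has no bounded total orbit near the origin other than $0$. Your remarks make explicit two details the paper leaves tacit: $\Phi_t(K)=K$ supplies a total trajectory through each point of $K$, and restricting to $S+I+R+C=1$ removes exactly the eigenvalue $-\mu$, because $(1,1,1,1)$ is a left eigenvector of $J(\DFE)$.

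Your alternative route is genuinely different and also correct. For $\BRN>1$, take $V=\{x\in\Om: S>1-\eta\}$ with $\eta=(\beta-\mu-\alpha)/(2\beta)$. On $V$ one has $\dot I\ge \tfrac12(\beta-\mu-\alpha)I$, so forward invariance of $K$ and boundedness of $I$ force $I\equiv 0$ on $K$. Proposition~\ref{prop:DFE-attractor}, already proved and so not circular, then gives $K=\{\DFE\}$.

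Compared with the paper, this route avoids the local topological conjugacy, including the awkwardness that $\DFE$ is a vertex of $\Om$. It also produces an explicit isolating neighborhood and uses the same $I$-equation and boundary structure that drive the later persistence argument. The Hartman--Grobman route, in exchange, is model-independent and handles both signs of $\BRN-1$ at once.

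For $\BRN<1$ you do not need the Lyapunov function. Since $\sigma\le 1$ and $S+C\le 1$ on $\Om$, one has $\dot I\le(\beta-\mu-\alpha)I$ globally. Running this estimate backward along a total trajectory in $K$ makes $I$ blow up unless $I\equiv 0$ on $K$, and Proposition~\ref{prop:DFE-attractor} again gives $K=\{\DFE\}$.
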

\begin{proof}
    Since $\BRN \neq 1$, every eigenvalue of the Jacobian $J(\DFE)$ has non-zero real part (Proposition~\ref{prop:DFE}), so $\DFE$ is a hyperbolic equilibrium. By the Hartman--Grobman theorem, $\Phit$ is topologically conjugate near $\DFE$ to the linearized flow $\dot y = J(\DFE)y$ near the origin. Since no eigenvalue of $J(\DFE)$ has zero real part,
    this linear flow has no non-empty compact invariant set in a small neighborhood of $0$ other than $\{0\}$ itself. 
    Topological conjugacy transfers this property to $\Phit$, 
    yielding a neighborhood $V$ of $\DFE$ whose only non-empty compact invariant subset is $\{\DFE\}$.
\end{proof}

\begin{lemma}\label{prop:repelling}
    Assume $\BRN > 1$. Then $\{\DFE\}$ is weakly $\rho$-repelling \citet[Definition 8.15]{SmithThieme}: there is no $x \in \Om$ with $\rho(x) = I > 0$ such that $\Phit(t,x) \to \DFE$ as $t\to\infty$.
\end{lemma}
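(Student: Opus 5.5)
We argue by contradiction. Suppose there is $x_0\in\Om$ with $I(0)>0$ and $\Phit(t,x_0)\to\DFE=(1,0,0,0)$ as $t\to\infty$. The goal is to show that $I(t)$ must then eventually grow exponentially, which is incompatible with $I(t)\to 0$. The argument uses only the linear-in-$I$ structure of \eqref{eq:SIRCm I} and the explicit representation of $I(t)$ from the proof of Proposition~\ref{prop:forward-invariance}.

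First, since $I(0)>0$, that representation gives
\[
I(t)=I(0)\exp\!\left(\int_0^t[\beta S(u)+\sigma\beta C(u)-(\mu+\alpha)]\,du\right)>0
\]
for all $t\ge0$. Second, since $\BRN>1$ we have $\beta-(\mu+\alpha)>0$, so we may set $\eta\coloneqq\tfrac12(\beta-\mu-\alpha)>0$. Because $S(t)\to1$, there is a $T>0$ with $S(t)\ge 1-\eta/\beta$ for all $t\ge T$. Using $C\ge0$ (Proposition~\ref{prop:forward-invariance}), the integrand then satisfies
\[
\beta S(u)+\sigma\beta C(u)-(\mu+\alpha)\ \ge\ \beta-\eta-(\mu+\alpha)=\eta>0 \qquad (u\ge T).
\]
Hence $I(t)\ge I(T)e^{\eta(t-T)}$ for $t\ge T$, with $I(T)>0$. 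So $I(t)\to\infty$. This contradicts $I(t)\to0$, and also contradicts the bound $I\le1$ on $\Om$. Therefore no trajectory with $\rho(x)>0$ converges to $\DFE$, which is exactly the weak $\rho$-repelling property of \citet[Definition 8.15]{SmithThieme}.

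There is essentially no hard step here. The only point needing care is to check that the definition in \citet[Definition 8.15]{SmithThieme} matches the formulation stated in the lemma, namely that no point with $\rho(x)>0$ has $\omega$-limit equal to $\{\DFE\}$, as opposed to a neighborhood-type condition. If the cited definition is instead the version requiring $\limsup_{t\to\infty} d(\Phit(t,x),\DFE)>0$, the same argument applies verbatim: assuming the limsup is zero is the same as assuming convergence to $\DFE$. The feedback terms involving $g$ and $\eps_1,\eps_2$ play no role, because they do not appear in the $I$-equation.
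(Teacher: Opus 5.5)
Your proof is correct, but it takes a different route from the paper. The paper uses the linearization. It notes that $\DFE$ is hyperbolic with stable eigenspace $\{I=0\}$ and that $\{I=0\}$ is invariant under the nonlinear flow. Uniqueness and tangency then identify $W^s_{\mathrm{loc}}(\DFE)$ with $\{I=0\}\cap V$. A trajectory converging to $\DFE$ must eventually lie on $W^s_{\mathrm{loc}}$, so $I$ would vanish at a finite time, which contradicts uniqueness against the solution $I\equiv 0$. You instead use the fact that $\dot I$ is linear in $I$ with per-capita rate $\beta S+\sigma\beta C-(\mu+\alpha)$. Near $\DFE$ this rate is bounded below by $\eta=\tfrac12(\beta-\mu-\alpha)>0$, so $I$ grows exponentially, which is incompatible with $I\le 1$. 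Your comparison argument is more elementary. It needs no stable-manifold machinery, and it avoids the somewhat delicate identification of $W^s_{\mathrm{loc}}$ with $\{I=0\}\cap V$, which implicitly relies on characterizing the local stable manifold as the set of points whose forward orbits stay in $V$. It also proves more than is asked. The same computation, run without assuming convergence, shows that every trajectory with $I(0)>0$ satisfies $\limsup_{t\to\infty}(1-S(t))\ge \eta/\beta$. So $\{\DFE\}$ is in fact uniformly weakly repelling, with an explicit neighborhood. The paper's route, for its part, reuses the hyperbolicity already established for Lemma~\ref{prop:isolated}, keeping the whole section within one dynamical-systems framework. Your hedge about \citet[Definition 8.15]{SmithThieme} is harmless: for the singleton $M=\{\DFE\}$, ``$\Phit(t,x)\to M$'' is exactly convergence to $\DFE$, so your formulation matches the one used in the lemma.
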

\begin{proof}
    By Proposition~\ref{prop:DFE}, the stable eigenspace $E^{s}$ of $J(\DFE)$ 
    is 3-dimensional and given by $E^{s}=\{I=0\}$. 
    One can directly see (similarly to  Proposition~\ref{prop:Omega0-invariant}) that the subspace  $\{I=0\}$ containing $\DFE$ is invariant under the non-linear flow $\Phi$.  
    By the local stable manifold theorem, there exists a unique local stable manifold $W^s_{\text{loc}}(\DFE)$ through $\DFE$, tangent to $E^s = \{I = 0\}$ at $\DFE$.
    Uniqueness, tangency and invariance force $\Ws_{\mathrm{loc}}(\DFE) = \{I=0\}\cap V$ for some neighborhood $V$ of $\DFE$.
    
    Now, suppose there exists $x\in\Om$ with $\rho(x)>0$---that is, $I(0) > 0$ for this trajectory---such that $\Phit(t,x)\to\DFE$ as $t\to\infty$. Then the 
      trajectory  must eventually enter the neighborhood $V$, and because it converges to $\DFE$, it must lie on 
      $W^s_{\mathrm{loc}}(\DFE)\subset\{I=0\}$ for all sufficiently large $t$. This would mean $I(t_1)=0$ for some finite $t_1>0$. 
      However, because $I(t) \equiv 0$ is a constant solution to the infected subsystem, reaching $I(t_1) = 0$ in finite time from $I(0) > 0$ violates the uniqueness of solutions to the initial value problem (see the proof of Proposition~\ref{prop:forward-invariance}).      
Therefore, no such $x$ can exist.
\end{proof}

We can now elevate local instability into global non-extinction across $\Omega$.

\begin{prop}\label{prop:uniform-weak-persistence}
Assume $\BRN > 1$. Then $\Phit$ is uniformly weakly $\rho$-persistent: 
there exists $\eta_{0} > 0$ such that
\[
  \limsup_{t\to\infty} I(t) > \eta_{0} 
  \quad \text{whenever } I(0) > 0.
\]
\end{prop}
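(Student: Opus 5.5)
The plan is to derive the statement directly from the acyclicity theorem of \cite{SmithThieme} (Theorem 8.17 there, the version built on Definitions 8.14--8.15), using the ingredients already assembled. The hypotheses of that theorem are as follows. First, $\Phit$ is a continuous semiflow on $\Om$ (Proposition~\ref{prop:semiflow}) with a compact attractor $\mathcal{A}$ of neighborhoods of compact sets (Proposition~\ref{prop:compact-attractor}). Second, $\rho$ is continuous, and $\Omzero=\rho^{-1}(0)$ is forward invariant (Proposition~\ref{prop:Omega0-invariant}). Third, the union $\tilde\Omega$ of $\omega$-limit sets of boundary orbits is covered by $M_1=\{\DFE\}$, which is acyclic (Proposition~\ref{prop:DFE-attractor}). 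Fourth, $M_1$ is isolated in $\Om$ (Lemma~\ref{prop:isolated}, valid since $\BRN>1\neq 1$) and weakly $\rho$-repelling (Lemma~\ref{prop:repelling}). Once these are listed, the theorem yields directly some $\eta_0>0$ with $\limsup_{t\to\infty}\rho(\Phit(t,x))>\eta_0$ for all $x$ with $\rho(x)>0$. This is exactly the claim, since $\rho=I$.

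One technical point needs checking: the orbits in $\Om\setminus\Omzero$ must stay in $\Om\setminus\Omzero$. This follows from the exponential representation of $I(t)$ in the proof of Proposition~\ref{prop:forward-invariance}, since $I(0)>0$ implies $I(t)>0$ for all $t\ge0$. It also ensures the $\rho$-positivity used in the framework.

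The main obstacle I expect is matching the precise form of the repelling hypothesis. Some versions of the Smith--Thieme acyclicity theorem require that no orbit with $\rho>0$ has its $\omega$-limit set \emph{contained in} $M_1$. Lemma~\ref{prop:repelling} instead gives non-convergence to $\DFE$. Since $M_1$ is a single point, $\omega(x)\subset\{\DFE\}$ and $\omega(x)\neq\emptyset$ (by compactness of $\Om$) together imply $\Phit(t,x)\to\DFE$. So the two formulations coincide, and this step only needs to be stated.

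If I wanted a self-contained fallback avoiding Theorem 8.17, I would argue by contradiction directly. Suppose $x_n$ have $I(0)>0$ and $\limsup I\le 1/n$. Pass to the $\omega$-limit sets, which are compact, invariant and lie in the compact set $\{I\le 1/n\}$. Using Proposition~\ref{prop:DFE-attractor} and the Butler--McGehee lemma, together with the isolation of $\{\DFE\}$, one shows that these limit sets must eventually be $\{\DFE\}$ itself. That contradicts Lemma~\ref{prop:repelling}. Because it requires Butler--McGehee and isolation, this route is heavier, so I would invoke the cited theorem.
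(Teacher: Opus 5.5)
Your proposal is correct and is essentially the paper's proof. Both apply the Smith--Thieme acyclicity theorem (Theorem 8.17) with $k=1$ and $M_1=\{\DFE\}$, using compactness and invariance of $M_1$, isolation (Lemma~\ref{prop:isolated}), weak $\rho$-repellence (Lemma~\ref{prop:repelling}) and trivial acyclicity; the paper also notes explicitly that the standing hypothesis (H) of Section 8.3 holds with $B=\Om$ because $\Om$ is compact and forward invariant. Your extra checks are correct and make explicit what the paper leaves implicit: that $I(0)>0$ forces $I(t)>0$ for all $t\ge 0$, and that for a singleton $M_1$ the conditions ``$\omega(x)\subset M_1$'' and ``$\Phit(t,x)\to\DFE$'' coincide.
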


\begin{proof}
The standing hypothesis~(H) of \citet[Section~8.3]{SmithThieme} holds with $B=\Om$: since $\Om$ is compact and forward invariant (Proposition~\ref{prop:forward-invariance}), every trajectory trivially approaches $\Om$, and $\Om \cap \{\rho \leq c\}$ is compact for every $c>0$.

We now apply \citet[Theorem 8.17]{SmithThieme} taking $k=1$ and $M_1 = \{\DFE\}$. 
The set $M_{1}$ is compact and invariant (our Proposition~\ref{prop:DFE-attractor}), isolated in $\Om$ (Lemma~\ref{prop:isolated}), weakly $\rho$-repelling (Lemma~\ref{prop:repelling}), and acyclic as a single set. The theorem yields uniform weak $\rho$-persistence of $\Phit$.
\end{proof}

We sharpen this result to uniform strong persistence 
by applying Theorem 4.5 of \cite{SmithThieme} with $B=\Om$.

\begin{cor}\label{cor:uniform-strong-persistence}
Assume $\BRN > 1$. Then $\Phit$ is uniformly (strongly) $\rho$-persistent: there exists $\eta_0 > 0$ such that
\[
  \liminf_{t\to\infty} I(t) > \eta_{0} 
  \quad \text{whenever } I(0) > 0.
\]
\end{cor}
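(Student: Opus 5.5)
The corollary follows from Proposition~\ref{prop:uniform-weak-persistence} via \citet[Theorem 4.5]{SmithThieme}. That theorem upgrades uniform weak $\rho$-persistence to uniform strong $\rho$-persistence when the semiflow has a compact attractor on the relevant set and a mild technical condition holds along total trajectories. So the proof reduces to checking those hypotheses for $\Phit$ on $B=\Om$ with $\rho(S,I,R,C)=I$.

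\emph{Step 1 (attractor).} Proposition~\ref{prop:compact-attractor} already gives that $\mathcal{A}=\omega(\Om)$ is a non-empty compact invariant set attracting $\Om$. In particular, $\Phit$ has a compact attractor of bounded (hence compact) subsets of $\Om$. This is exactly the compactness hypothesis required in \citet[Theorem 4.5]{SmithThieme}.

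\emph{Step 2 (total-trajectory condition).} Theorem 4.5 requires the following. Whenever $\phi:\mathbb{R}\to\Om$ is a total trajectory in the attractor with $\rho(\phi(r))>0$ for some $r$, we need $\rho(\phi(t))>0$ for all $t>r$. I would verify this from the explicit formula
\[
I(t)=I(r)\exp\!\left(\int_r^t[\beta S(u)+\sigma\beta C(u)-(\mu+\alpha)]\,du\right),
\]
used in the proof of Proposition~\ref{prop:forward-invariance}. Because the exponential factor is strictly positive, $I(r)>0$ forces $I(t)>0$ for every $t\ge r$. Continuity of $\rho$ is immediate.

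\emph{Step 3 (conclusion).} With uniform weak persistence from Proposition~\ref{prop:uniform-weak-persistence}, Theorem 4.5 yields some $\eta>0$ with $\liminf_{t\to\infty} I(t)\ge\eta$ whenever $I(0)>0$. Replacing $\eta$ by $\eta_0\coloneqq\eta/2$ gives the strict inequality in the statement.

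The main obstacle is bookkeeping rather than analysis: matching the exact formulation of the hypotheses of \citet[Theorem 4.5]{SmithThieme}, including whether the attractor must attract points or compact sets and whether the positivity condition is imposed on total trajectories or on forward orbits. If the version at hand requires instead that $\rho(\Phit(t,x))>0$ for all $t>0$ whenever $\rho(x)>0$, Step 2 covers that case verbatim, since the formula above applies to every forward solution in $\Om$.
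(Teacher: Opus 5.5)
Your proposal is correct and takes the same route as the paper, whose whole argument is the one-line invocation of \citet[Theorem 4.5]{SmithThieme} with $B=\Om$ on top of Proposition~\ref{prop:uniform-weak-persistence}. Your Steps 1--2 make explicit the hypothesis checks that the paper leaves implicit: the compact attractor from Proposition~\ref{prop:compact-attractor}, and forward positivity of $I$, which rules out the total trajectories excluded by (H1). One small slip is harmless: bounded subsets of $\Om$ are only relatively compact, not ``compact'' as your parenthetical says, but this does not matter because $\mathcal{A}$ attracts all of $\Om$.
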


\begin{rem}
 \citet[Theorem 6.2]{SmithThieme}---
which we utilize below to establish the existence of the endemic equilibrium---requires only uniform \emph{weak} persistence. However, Corollary~\ref{cor:uniform-strong-persistence} strengthens the epidemiological interpretation,
proving that, for $\BRN > 1$, any trajectory starting with an initial infection remains strictly bounded away from disease extinction in the long run.
\end{rem}

Uniform persistence implies that trajectories originating off the boundary do not accumulate on $\Omega_0$. 
This guarantees the existence of a compact persistence attractor bounded strictly away from the extinction boundary $\Omega_0$.

\begin{prop}\label{prop:persistence-attractor}
Assume $\BRN > 1$. The set
\[
  \mathcal{A}_{1} \;\coloneqq\; \overline{\bigcup_{x \in \Om \setminus \Omzero} \omega(x)},
\]
 where $\overline{(\cdot)}$ denotes the closure operation, is a non-empty, compact, invariant subset of $\Om \setminus \Omzero$
 that is uniformly $\rho$-positive; that is, there exists $\eta_1 > 0$ such that $\rho(x) \ge \eta_1$ for all $x \in \mathcal{A}_1$.
\end{prop}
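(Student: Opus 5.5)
We prove Proposition~\ref{prop:persistence-attractor} by combining the uniform strong persistence of Corollary~\ref{cor:uniform-strong-persistence} with the compactness of $\Om$ (Proposition~\ref{prop:state-space}). The three claims, namely non-emptiness and compactness, uniform $\rho$-positivity, and invariance, will be handled in that order.

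\emph{Non-emptiness and compactness.} Since $\BRN>1$, the set $\Om\setminus\Omzero$ is non-empty; for instance, it contains $(1-\eta,\eta,0,0)$ for small $\eta>0$. For any $x\in\Om\setminus\Omzero$, the forward orbit lies in the compact set $\Om$ by Proposition~\ref{prop:forward-invariance}. Hence $\omega(x)$ is non-empty, compact and invariant. The closure $\mathcal{A}_1$ is a closed subset of the compact set $\Om$, so it is compact.

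\emph{Uniform positivity.} Let $\eta_0$ be the constant from Corollary~\ref{cor:uniform-strong-persistence}, and take $y\in\omega(x)$ with $I(0)>0$. Then $y=\lim \Phit(t_n,x)$ for some $t_n\to\infty$, and continuity of $\rho$ gives
\[
\rho(y)=\lim_n I(t_n)\ge \liminf_{t\to\infty} I(t)\ge \eta_0 .
\]
Thus $\rho\ge\eta_0$ on the union of these $\omega$-limit sets. Because $\rho$ is continuous, the inequality passes to the closure, so $\rho\ge\eta_1\coloneqq\eta_0$ on $\mathcal{A}_1$. In particular $\mathcal{A}_1\cap\Omzero=\emptyset$, which gives $\mathcal{A}_1\subset\Om\setminus\Omzero$.

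\emph{Invariance.} This is the only step that requires some care. A union of invariant sets is invariant, so $U\coloneqq\bigcup_{x\in \Om\setminus\Omzero}\omega(x)$ satisfies $\Phit_t(U)=U$ for every $t\ge0$. For $\Phit_t(\overline U)\subset\overline U$, continuity of $\Phit_t$ gives $\Phit_t(\overline U)\subset\overline{\Phit_t(U)}=\overline U$. For the reverse inclusion $\overline U\subset\Phit_t(\overline U)$, take $y\in\overline U$ with $y=\lim y_n$, $y_n\in U$. Write $y_n=\Phit_t(z_n)$ with $z_n\in U$. By compactness of $\Om$, a subsequence satisfies $z_n\to z\in\overline U$, and continuity then gives $y=\Phit_t(z)$.

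Alternatively, one could cite \citet[Theorem 5.7]{SmithThieme}, which asserts that a compact attractor of $\Om\setminus\Omzero$ exists and is $\rho$-positive under uniform persistence. That attractor can then be identified with $\mathcal{A}_1$. I expect the direct argument above to suffice.
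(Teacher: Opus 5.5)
Your proof is correct, but it follows a different route from the paper's. The paper proves the proposition in one line by citing \citet[Theorem 5.7]{SmithThieme}, using as inputs only compactness of $\Om$ and the uniform \emph{weak} persistence of Proposition~\ref{prop:uniform-weak-persistence}. You verify the three properties directly. The $\liminf$ bound of Corollary~\ref{cor:uniform-strong-persistence}, together with continuity of $\rho$, gives $\rho\ge\eta_0$ on each $\omega(x)$ with $I>0$, and then on the closure. Continuity of $\Phit_t$ together with compactness of $\Om$ gives $\Phit_t(\mathcal{A}_1)=\mathcal{A}_1$.

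Your version is self-contained and establishes the stated properties for exactly the set $\mathcal{A}_1$ as defined, without relying on how the cited theorem constructs its persistence attractor. The cost is that strong persistence is indispensable to your positivity step: a $\limsup$ bound says nothing about every $\omega$-limit point. So the real work sits in the weak-to-strong upgrade behind Corollary~\ref{cor:uniform-strong-persistence}, namely \citet[Theorem 4.5]{SmithThieme}.

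The paper's citation takes only the weaker persistence statement as input. It also delivers $\mathcal{A}_1$ as a persistence \emph{attractor}, which is how $\mathcal{A}_1$ is used later in the proof of Theorem~\ref{prop:endemic-equilibrium}. Your argument gives only the properties listed in the statement, which is all that is asked here.

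A minor point: non-emptiness of $\Om\setminus\Omzero$ does not need $\BRN>1$.
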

\begin{proof}
Apply \citet[Theorem 5.7]{SmithThieme}: since $\Om$ is compact (Proposition~\ref{prop:state-space}) and $\Phit$ is uniformly weakly $\rho$-persistent (Proposition~\ref{prop:uniform-weak-persistence}), the set $\mathcal{A}_{1}$ 
possesses all stated properties.
\end{proof}

We now combine all ingredients—the global attractor $\mathcal{A}$, the persistence attractor $\mathcal{A}_1$, the persistence function $\rho$, and boundary invariance—to apply the persistence-theoretic fixed-point theorem.

\begin{proof} (of Theorem \ref{prop:endemic-equilibrium}).
    We verify the hypotheses of \citet[Theorem 6.2]{SmithThieme}. The set $\Om$ is a closed and convex subset of the Banach space $\R^{4}$ (Proposition~\ref{prop:state-space}); the persistence function $\rho$ is continuous and concave (\ref{para:persistence-function}); $\Phit$ is a continuous semiflow (Proposition~\ref{prop:semiflow}); $\Phit$ has a compact attractor $\mathcal{A}$ 
    of neighborhoods of compact sets (Proposition~\ref{prop:compact-attractor}), and 
     has a compact persistence attractor $\mathcal{A}_1 \subset \Omega \setminus \Omega_0$ 
    (Proposition~\ref{prop:persistence-attractor}); $\Phit_{t}$ 
    is conditionally compact (hence condensing) for all $t > 0$ in finite dimension $\mathbb{R}^4$, 
    and $\Phit$ is uniformly weakly $\rho$-persistent (Proposition~\ref{prop:uniform-weak-persistence}).
    
    It remains to verify hypothesis~(H1): that no total trajectory 
    $\phi \colon \mathbb{R} \to \mathcal{A}$
    satisfies $\rho(\phi(-r)) > 0$ and $\rho(\phi(s)) > 0$ for some $r, s \ge 0$ while $\rho(\phi(0)) = 0$. 
    Indeed, if $\rho(\phi(0)) = 0$, then $\phi(0) \in \Omega_0$.
    By forward invariance of $\Omzero$ (Proposition~\ref{prop:Omega0-invariant}), 
    $\phi(s) \in \Omega_0$ for all $s \ge 0$, which implies $\rho(\phi(s)) = 0$ for all $s \ge 0$. Thus, no trajectory can leave $\{I > 0\}$, hit the extinction boundary $\{I = 0\}$, and return to $\{I > 0\}$. Hypothesis (H1) holds.
    
    By \citet[Theorem 6.2]{SmithThieme},
    $\Phi$ has a fixed point $x^* \in \Omega$ with $\rho(x^*) = I^* > 0$. Therefore, $x^*$ is an endemic equilibrium of SIRCm \eqref{eq:SIRCm}, satisfying $\Phi(t, x^*) = x^*$ for all $t \ge 0$.
\end{proof}

\begin{rem}
We emphasize that the existence and stability of the disease-free equilibrium (Proposition~\ref{prop:DFE}) and the persistence-theoretic existence of an endemic equilibrium (Theorem~\ref{prop:endemic-equilibrium}) are established for general $g$ satisfying Definition~\ref{dfn:funcg}. These results continue to hold if we replace the expressions $(1 + \varepsilon_1 g(x))$ and $(1 + \varepsilon_2 g(x))$ by two $C^1$-smooth functions $h_j : [0, \infty)^2 \times [0,1]^2 \to [1, \infty)$ depending on $(\varepsilon_1, \varepsilon_2)$ and the state variables $(S,I)$ for $j = 1, 2$, satisfying
\begin{itemize}
    \item $I = 0 \implies h_j(\varepsilon_1, \varepsilon_2, S, 0) = 1$;
    \item $h_j(\varepsilon_1, \varepsilon_2, S, I) = 1 \implies SI = 0$.
\end{itemize}
\end{rem}

While Theorem \ref{prop:endemic-equilibrium} guarantees an endemic equilibrium $x^*$ with positive infected population ($I^* > 0$), it remains to confirm that $x^*$ lies strictly in the interior of the state space $\Omega$. The following proposition demonstrates that positivity of $I^*$ induces a positivity cascade across all remaining state variables.

\begin{prop}\label{prop:interior-equilibrium}
The coordinates of the equilibrium $x^{*}$ in Theorem~\ref{prop:endemic-equilibrium} satisfy $S^{*}, R^{*}, C^{*} > 0$. Hence $x^{*} \in \mathrm{int}(\Om)$.
\end{prop}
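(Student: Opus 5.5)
We set the right-hand sides of \eqref{eq:SIRCm} equal to zero at $x^*=(S^*,I^*,R^*,C^*)\in\Om$ with $I^*>0$ and read off positivity one coordinate at a time. The order is $R^*$ first (it is driven directly by $I^*$), then $C^*$ (driven by $R^*$), then $S^*$ (driven by the birth term). Throughout we use that all coordinates are already non-negative, since $x^*\in\Om$, and that $g\ge 0$ on $\Om$ by Definition~\ref{dfn:funcg}.

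\emph{Step 1 ($R^*>0$).} From \eqref{eq:SIRCm R} at equilibrium,
\[
\bigl(\mu+(1+\eps_1 g(x^*))\delta\bigr)R^* = (1-\sigma)\beta C^*I^* + \alpha I^* .
\]
The right-hand side is at least $\alpha I^*>0$, because $\sigma\le 1$ and $C^*,I^*\ge 0$. The coefficient on the left is at least $\mu+\delta>0$. Hence $R^*>0$.

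\emph{Step 2 ($C^*>0$).} From \eqref{eq:SIRCm C},
\[
\bigl(\beta I^*+\mu+(1+\eps_2 g(x^*))\gamma\bigr)C^* = (1+\eps_1 g(x^*))\delta R^* .
\]
The right-hand side is at least $\delta R^*>0$ by Step 1. The coefficient on the left is positive. Hence $C^*>0$.

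\emph{Step 3 ($S^*>0$).} Suppose $S^*=0$. Then \eqref{eq:SIRCm S} reads
\[
0=\mu+(1+\eps_2 g(x^*))\gamma C^*,
\]
whose right-hand side is at least $\mu>0$, a contradiction. Therefore $S^*>0$. Alternatively, solve $S^*=\bigl(\mu+(1+\eps_2g)\gamma C^*\bigr)/(\mu+\beta I^*)>0$ directly.

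\emph{Conclusion.} All four coordinates are positive and they sum to $1$, so each is also strictly less than $1$. Hence $x^*$ lies in the relative interior of the simplex $\Om$, which is the meaning of $\mathrm{int}(\Om)$ here.

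No step is a real obstacle. The only care needed is the ordering of the cascade, $I^*\Rightarrow R^*\Rightarrow C^*$, with $S^*$ handled independently of the others. One should also note explicitly that $g(x^*)\ge 0$, so the multipliers $1+\eps_j g$ are at least $1$ and cannot cancel the positive terms. This holds for any $g$ satisfying Definition~\ref{dfn:funcg} and any $\eps_1,\eps_2\ge 0$.
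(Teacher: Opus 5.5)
Your proof is correct and follows the paper's argument essentially verbatim: the same cascade $I^*\Rightarrow R^*\Rightarrow C^*$ via the equilibrium equations $\dot R=0$ and $\dot C=0$, then solving $\dot S=0$ directly for $S^*$. Your remark that $\mathrm{int}(\Om)$ must be read as the relative interior of the simplex is a useful clarification that the paper leaves implicit.
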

\begin{proof}
From $\dot R = 0$ at $x^{*}$:
\[
 (1-\sigma)\beta C^{*} I^{*} + \alpha I^{*} 
    = \left[\mu + (1+\eps_1 g(x^*))\delta\right] R^{*}.
\]
The left-hand side is $\geq \alpha I^{*} > 0$, and the right-hand side is proportional to $R^{*}$ with strictly positive coefficient (since $g(x^*)\geq 0$); hence $R^{*} > 0$.

From $\dot C = 0$ at $x^{*}$:
\[
    (1+\eps_1 g(x^*))\delta R^{*} 
    = \left[\beta I^{*} + \mu + (1+\eps_2 g(x^*))\gamma\right] C^{*}.
\]
The left-hand side is strictly positive since $\delta, R^{*} > 0$, and the right-hand side is proportional to $C^{*}$ with strictly positive coefficient; hence $C^{*} > 0$.

From $\dot S = 0$ at $x^{*}$:
\[
    \mu + (1+\eps_2 g(x^*))\gamma C^{*} 
    = \left[\mu + \beta I^{*}\right] S^{*}.
\]
The left-hand side is at least $ \mu > 0$, and the right-hand side is proportional to $S^{*}$ with strictly positive coefficient, hence $S^{*} > 0$.

Together with $I^{*}  > 0$,
all four coordinates of $x^{*}$ are strictly positive, so $x^{*} \in \mathrm{int}(\Om)$.
\end{proof}

To conclude this section, we establish explicit lower and upper bounds for the endemic equilibrium components $S^*$ and $I^*$. In particular, $I^*$ is bounded relative to $\infEE[\infty] \coloneqq 1 - \mathcal{R}_0^{-1}$, the classical endemic level without feedback. This benchmark will prove essential in Section~\ref{sec:strong_feedback}, where we analyze the system's behavior under strong feedback and demonstrate, among other findings, that $I_\infty^*$ acts as a supremum for $I^*$.

\begin{lemma}
\label{lemma:bounds EE}
    Let $\BRN>1$ and let $g$ satisfy Definition \ref{dfn:funcg}. 
    Then, for all $\eps_1,\eps_2\geq 0$, any endemic equilibrium of SIRCm satisfies
    \begin{align*}
        \frac{\mu}{\mu+\beta}\leq S^*\leq \BRN^{-1},\quad 
        \frac{\mu}{\mu+\alpha}\infEE[\infty]\leq \infEE[]\leq \infEE[\infty],
    \end{align*}
    where $\infEE[\infty]\coloneqq1-\BRN^{-1}$.
\end{lemma}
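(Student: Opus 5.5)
The plan is to read all four bounds directly off the equilibrium equations of \eqref{eq:SIRCm}. No explicit solution is needed, only sign information and the conservation law $S^*+I^*+R^*+C^*=1$.

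\emph{Upper bound on $S^*$.} Since $I^*>0$, I will divide $\dot I=0$ by $I^*$ to get
\[
\beta S^*+\sigma\beta C^*=\mu+\alpha .
\]
Because $\sigma\ge 0$ and $C^*\ge 0$, this gives $\beta S^*\le \mu+\alpha$, that is, $S^*\le \BRN^{-1}$.

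\emph{Lower bound on $S^*$.} From $\dot S=0$ we have
\[
\mu+(1+\eps_2 g(x^*))\gamma C^*=(\mu+\beta I^*)S^* .
\]
Since $g\ge0$, the left-hand side is at least $\mu$. Using $I^*\le 1$, this gives $(\mu+\beta)S^*\ge\mu$.

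\emph{Upper bound on $I^*$.} I would sum the equations $\dot I=0$, $\dot R=0$ and $\dot C=0$. The feedback terms $(1+\eps_1 g)\delta R$ cancel between $\dot R$ and $\dot C$, and the $\beta CI$ terms combine as $\sigma+(1-\sigma)-1=0$. The result is
\[
\beta S^*I^*-\mu(I^*+R^*+C^*)-(1+\eps_2 g(x^*))\gamma C^*=0 ,
\]
which is just $-\dot S=0$ rewritten with $I^*+R^*+C^*=1-S^*$. That identity alone is not sharp enough. The cleaner route is to note that the compartments $R,C$ are nonnegative, so $I^*\le 1-S^*$. The upper bound $I^*\le 1-\BRN^{-1}$ then needs a lower bound $S^*\ge \BRN^{-1}$, which points the wrong way. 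I will therefore use the $I$-equation instead: $\sigma\beta C^*=\mu+\alpha-\beta S^*$, so $C^*=(\mu+\alpha-\beta S^*)/(\sigma\beta)$ when $\sigma>0$. Combining this with $I^*=1-S^*-R^*-C^*$ gives
\[
I^*\le 1-S^*-C^*\le 1-S^*-\sigma C^* = 1-\BRN^{-1},
\]
where the first inequality uses $R^*\ge0$, the second uses $\sigma\le 1$ and $C^*\ge 0$, and the final equality is exactly the $I$-equation divided by $\beta$. This inequality chain also covers the case $\sigma=0$ directly, so no division by $\sigma$ is required.

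\emph{Lower bound on $I^*$.} This is the main obstacle. I would bound $R^*+C^*$ from above in terms of $I^*$. Adding $\dot R=0$ and $\dot C=0$ gives
\[
\alpha I^*-\sigma\beta C^*I^*=\mu(R^*+C^*)+(1+\eps_2 g(x^*))\gamma C^* \ge \mu(R^*+C^*),
\]
so $R^*+C^*\le \alpha I^*/\mu$. Then
\[
1-S^*=I^*+R^*+C^*\le I^*\,\frac{\mu+\alpha}{\mu}.
\]
Using $S^*\le\BRN^{-1}$, we have $1-S^*\ge \infEE[\infty]$, and therefore $I^*\ge \frac{\mu}{\mu+\alpha}\infEE[\infty]$. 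The one step I will double-check here is that the dropped terms are nonnegative, and they are, since $\sigma\beta C^*I^*\ge0$ enters only with the favorable sign.
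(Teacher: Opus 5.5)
Your proposal is correct and follows essentially the paper's argument: both upper bounds come from the $I$-equation $S^*+\sigma C^*=\BRN^{-1}$ combined with conservation, and the lower bound on $S^*$ comes from dropping the $(1+\eps_2 g)\gamma C^*$ term in the $S$-balance and using $I^*\le 1$. For the lower bound on $I^*$, the paper uses the $S$-balance directly, $\beta S^*I^*\ge\mu(1-S^*)$, while you use the combined $\dot R+\dot C=0$ balance; by conservation and the $I$-equation these are the same inequality, with your discarded term $\sigma\beta C^*I^*\ge 0$ playing the role of the paper's step $\beta S^*\le\mu+\alpha$, and the exploratory detour in your upper-bound paragraph can simply be deleted.
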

\begin{proof}
   Since $I^*>0$ for every endemic equilibrium, from $\dot I =0$ in  
   \eqref{eq:SIRCm I} we get
   \begin{align}
    \label{eq: relation SC endemic}
    S^* = \BRN^{-1}-\sigma C^*.
\end{align}
Substituting into $I^*=1-S^*-R^*-C^*$ yields
\begin{align}
\label{eq: relation ISC endemic}
    I^* 
    = \left(1-\BRN^{-1}\right)-R^*-(1-\sigma)C^*,
\end{align}
which, together with \eqref{eq: relation SC endemic}, gives us the upper bounds.
From $\dot S=0$, we have
\begin{align*}
    S^*(\mu+\beta \infEE[])=\mu+(1+\eps_2 g)\gamma C^* \geq \mu,
\end{align*}
hence
$S^*\geq 
\mu/(\mu+\beta \infEE[])
\geq
\mu/(\mu+\beta)$.
Similarly, from \eqref{eq: relation ISC endemic}
and since $S^*\leq \BRN^{-1}$,
we obtain
\begin{align*}
    \infEE[]\geq \frac{\mu (1-S^*)}{\beta S^*}\geq 
    \frac{\mu(1-\BRN^{-1})}{\beta\BRN^{-1}} = 
    \frac{\mu}{\mu+\alpha}\infEE[\infty].
\end{align*}
\end{proof}

\section{Stability analysis and bifurcation study}
\label{sec: stability and bifurcation}

The mutation feedback in SIRCm \eqref{eq:SIRCm} is governed by the parameters $\eps_{1}$ and $\eps_{2}$, present as amplification factors of the immune-escape and waning terms
\[
    (1+\eps_{1}g(x))\quad \text{and} \quad (1+\eps_{2}g(x))
\]
Since these always appear in a product with $g(x)$, it is convenient to introduce the shorthand
\begin{equation}
    \epstilde_{1} \coloneqq    {g(\EE[0])}\,\eps_{1}, \qquad
     \epstilde_{2}\coloneqq g(\EE[0])\,\eps_{2},
    \label{eq:epstilde}
\end{equation}
where $\EE[0]$ is the endemic equilibrium
of the SIRC model.
With this notation, the long-term dynamics of the model will be similar to that of SIRC if
$|\eps_{1,2}g(x)|\ll1$.  If this is not the case,
then substantially different behavior may arise.

This can be observed in Figure \ref{fig:eps1_eps2_prevalence}.
The left panel shows the endemic prevalence $I^*$ of the transmission-driven model as a function of the normalized feedback parameters $\tilde\eps_1$ and $\tilde\eps_2$. The plot reveals two regimes separated by a sharp transition: a low prevalence regime (bottom left), where $I^*$ remains close to its SIRC value, and a high prevalence regime (top right), where $I^*$ is near its theoretical ceiling. The right panel shows the values along the diagonal $\tilde\eps_1 = \tilde\eps_2 = \tilde\eps$.  There we also plot for comparison some asymptotic approximations developed in Section \ref{sec:low_prev_series}. This figure is based on the transmission-driven model; the prevalence-driven model exhibits the same qualitative two-regime structure.
\begin{figure}
    \centering
    \includegraphics[width=1\textwidth]{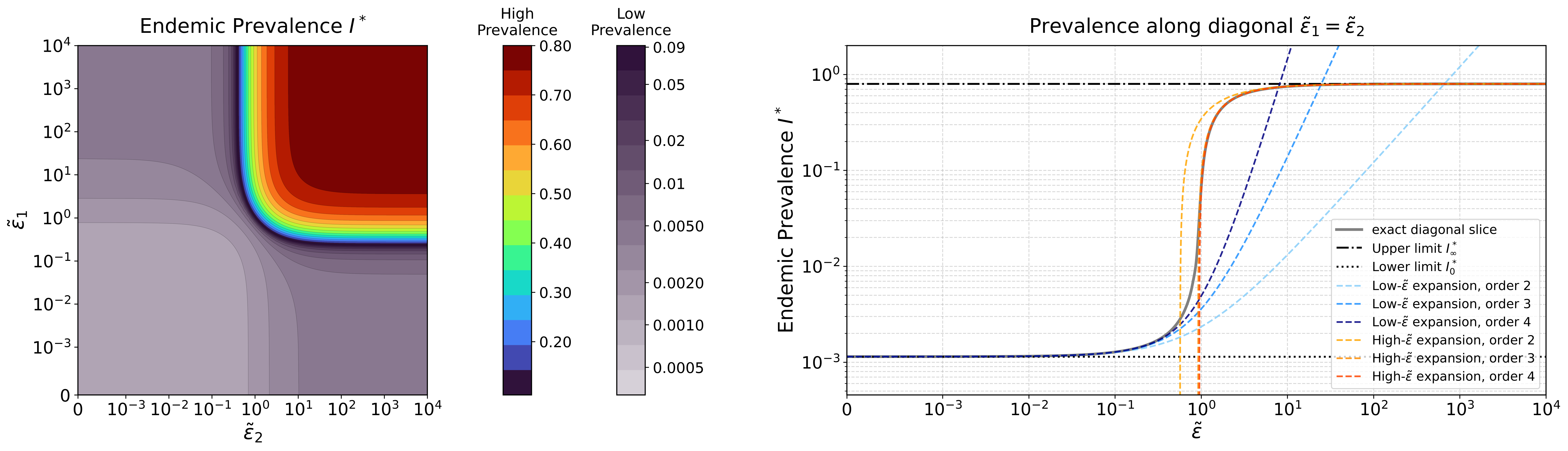}
    \caption{Endemic prevalence $I^*$ of the transmission-driven SIRCm model. \emph{Left}: two-dimensional plot of $I^*$ as a function of $\eps_1$ and $\eps_2$. \emph{Right}: diagonal slice $\epstilde_1 = \epstilde_2 = \epstilde$  together with the first three orders of the Taylor series approximations for low and high feedback regimes presented in Sections \ref{sec:low_prev_series} and \ref{sec:high_prev_series}. The dotted line marks the lower limit $\infEE[0]$, the endemic prevalence of the SIRC model recovered as $\epstilde \to 0 $; the dash-dotted line marks the upper limit $I_\infty^* = 1 - \mathcal{R}_0^{-1}$, the theoretical ceiling on prevalence approached as $\epstilde \to \infty$, corresponding to the SIS limit of Section \ref{sec:strong_feedback}. The color regimes transition at the inflection point of the endemic prevalence along the diagonal (where the derivative of $I^*$ along $\eps_1 = \eps_2$ is maximized)}
    \label{fig:eps1_eps2_prevalence}
\end{figure}

As $(\epstilde_1,\epstilde_2,\beta_0)$ vary, the locus of points at which the complex-conjugate eigenvalue pair of $J(\EE)$ crosses the imaginary axis traces out a two-dimensional Hopf bifurcation surface in this three-parameter space \citep{kuznetsov2023elements}. In order to analyze the stability of the endemic equilibrium, Figure \ref{fig:eps1_eps2_hopf_surface_I} shows this surface for the prevalence-driven model, with all other parameters fixed at the base values. An unstable region is present for sufficiently small $\epstilde_1$. The diagonal $\eps_1 = \eps_2$, however, does not enter this unstable region for any  $\beta_0 \in [200,2000]$. 

\begin{rem}[Notation]
We write $\EE$ for the endemic equilibrium of SIRCm \eqref{eq:SIRCm} at a given value of the  feedback parameter(s). The disease-free equilibrium $\DFE$ is as in Proposition~\ref{prop:DFE}. We further write $\EE = (S^*,I^*,R^*,C^*)^\top$,  where the coordinates inherit the same implicit dependence on $\eps$. Since $I^*$ is the primary object of interest below, we write it explicitly as $I^*_\varepsilon$ when its $\varepsilon$-dependence is the focus.
\end{rem}

\begin{figure}
    \centering
    \includegraphics[width=1\textwidth]{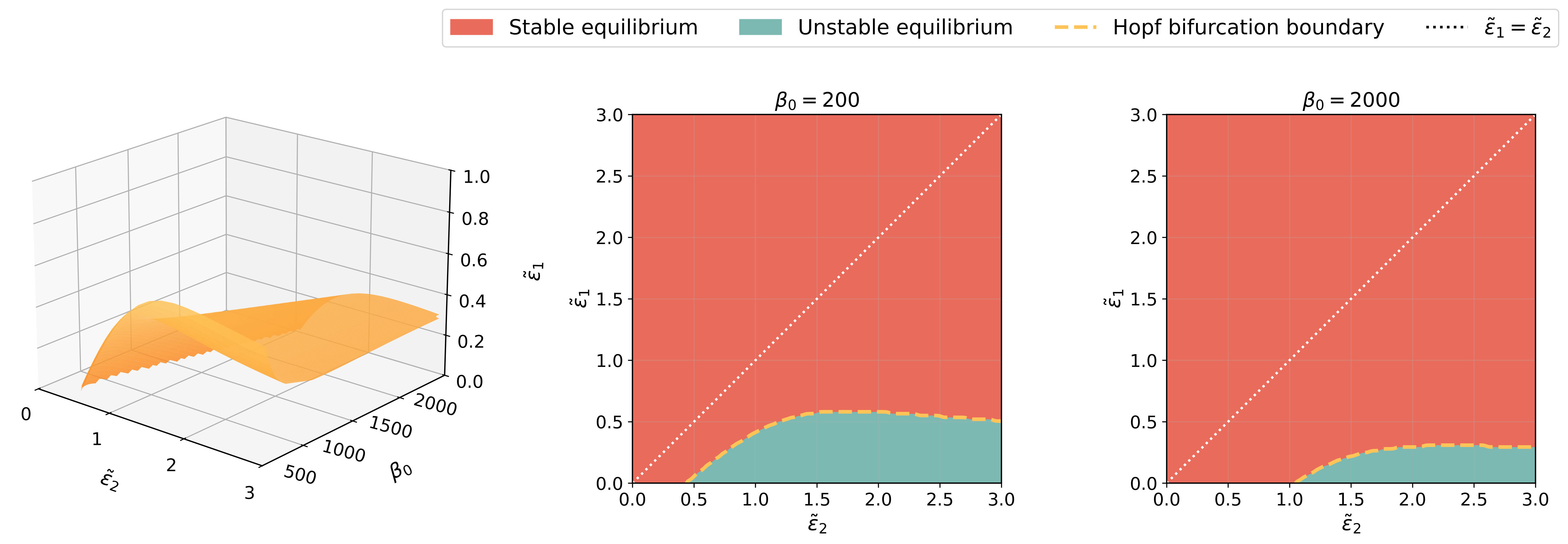}
    \caption{(see Section \ref{sec: stability and bifurcation}) Hopf bifurcation surface across $(\tilde\varepsilon_1,\tilde\varepsilon_2,\beta_0)$ for the prevalence-driven model (left). The center and right panels show fixed-$\beta_0$ cross-sections of this surface at $\beta_0 = 200$ and $\beta_0 = 2000$, respectively, in which the dashed curve marks the Hopf bifurcation boundary; all other parameters are fixed}
    \label{fig:eps1_eps2_hopf_surface_I}
\end{figure}

For fixed $\beta_0$, the locus of Hopf points in the $(\tilde\eps_1,\tilde\eps_2)$-plane forms a curve. To trace it, we first locate a seed point on the $(\tilde\eps_1,\tilde\eps_2)$-grid at which the endemic equilibrium is unstable; a one-parameter pseudo-arclength continuation (PALC) from this seed, together with eigenvalue-crossing detection, then locates an initial Hopf point, from which two-parameter continuation in $(\tilde\eps_1,\tilde\eps_2)$ (all performed in Julia using \textsf{BifurcationKit.jl}, \cite{veltz2020bifurcationkit}) traces the full curve. Repeating this procedure across values of $\beta_0\in[200,2000]$ and interpolating between the resulting curves yields the two-dimensional Hopf bifurcation surface in $(\tilde\eps_1,\tilde\eps_2,\beta_0)$-space shown in Figures~\ref{fig:eps1_eps2_hopf_surface_I}, \ref{fig:eps1_eps2_hopf_surface}, and \ref{fig:hopf_surface}; fixed-$\beta_0$ cross-sections of this surface recover Hopf curves in the $(\tilde\eps_1,\tilde\eps_2)$-plane.

Figure~\ref{fig:eps1_eps2_hopf_surface} shows the analogous Hopf bifurcation surface for the transmission-driven model, with slices at $\beta_0=600$ and $\beta_0=900$. In contrast to the prevalence-driven case, the diagonal $\eps_1=\eps_2$ does cross into the unstable region once $\beta_0$ is large enough. 
\begin{figure}
    \centering
    \includegraphics[width=1\textwidth]{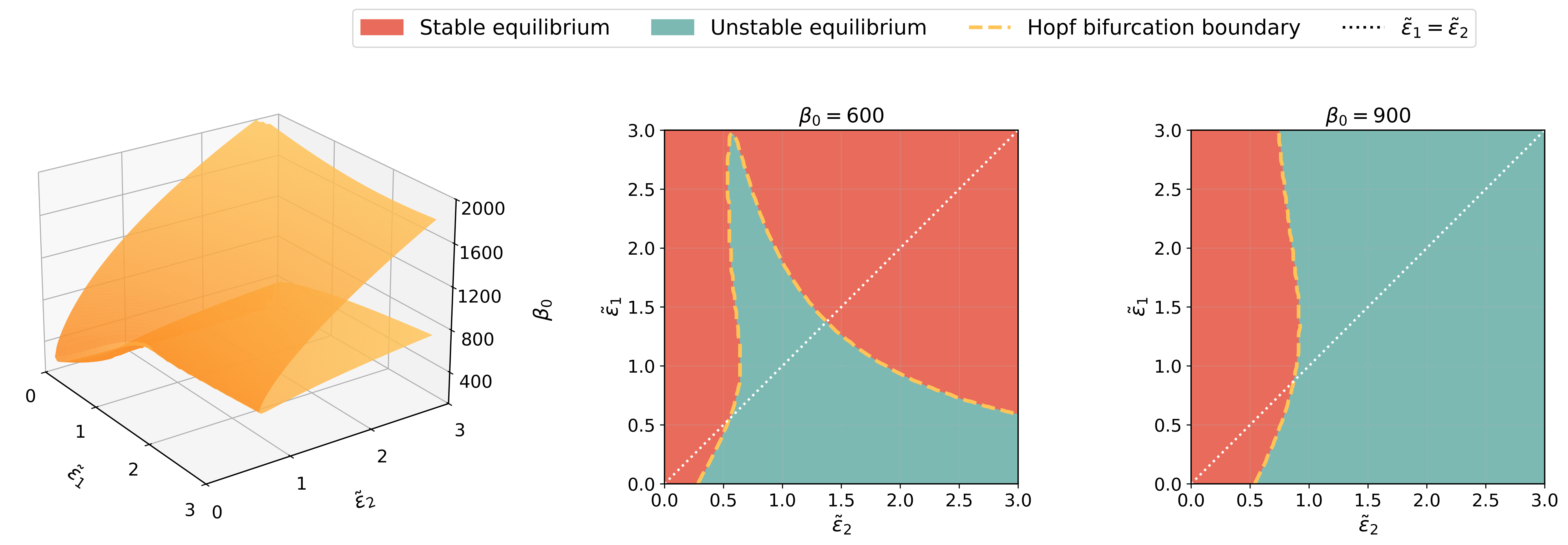}
    \caption{(see Section \ref{sec: stability and bifurcation}) Hopf bifurcation surface across $(\tilde\varepsilon_1,\tilde\varepsilon_2,\beta_0)$ for the transmission-driven model (left). The center and right panels show fixed-$\beta_0$ cross-sections of this surface at $\beta_0 = 600$ and $\beta_0 = 900$, respectively, in which the dashed curve marks the Hopf bifurcation boundary; all other parameters are fixed}
    \label{fig:eps1_eps2_hopf_surface}
\end{figure}

Having seen that the diagonal $\epstilde_1 = \epstilde_2$ remains stable for the prevalence-driven model (Figure \ref{fig:eps1_eps2_hopf_surface_I}) but crosses into the unstable region for the transmission-driven model once $\beta_0$ is large enough (Figure \ref{fig:eps1_eps2_hopf_surface}), we adopt this single-parameter restriction $\epstilde_1 = \epstilde_2 = \epstilde$ for the remainder of the section, as it gives a natural one-parameter family suitable for direct comparison between the two models.

Because the transmission-driven model exhibits a Hopf bifurcation along the diagonal, we track the four eigenvalues of $J(\EE[\eps])$ as functions of $\tilde\eps$ at fixed $\beta_0$. Figure~\ref{fig:eigenvalues} shows this sweep at $\beta_0=600$, where two eigenvalues remain negative throughout, while the remaining complex-conjugate pair crosses the imaginary axis twice, marking the two Hopf bifurcation points that bound the unstable region and partition $[0,2]$ into the three regimes: the endemic equilibrium is locally asymptotically stable for smaller values of $\epstilde$, unstable for the middle region and stable again for larger values. The subsections that follow will examine each of these three regimes (Sections~\ref{sec:weak_feedback}, \ref{sec:moderate feedback}, and \ref{sec:strong_feedback}).
\begin{figure}
    \centering
    \includegraphics[width=1\textwidth]{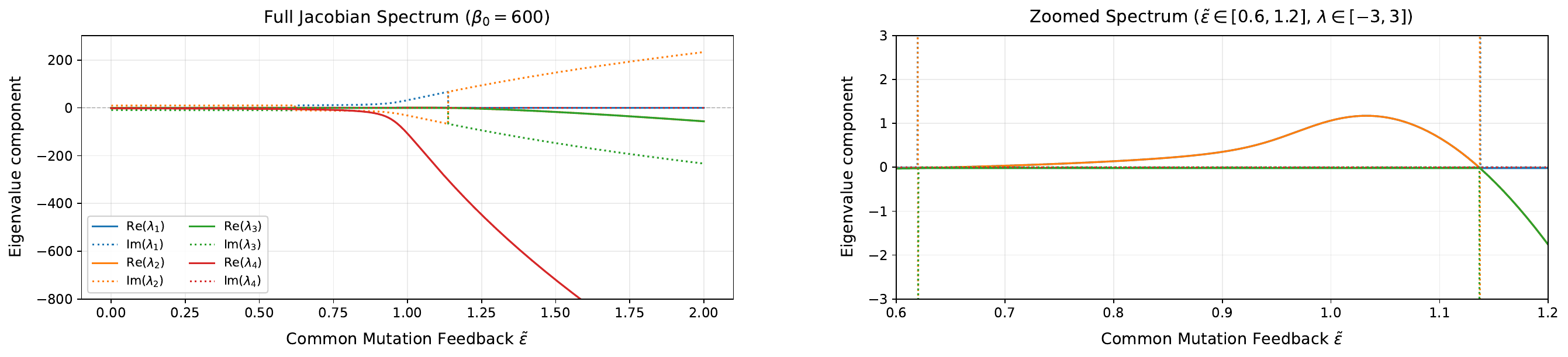}
    \caption{(see Section \ref{sec: stability and bifurcation}) Eigenvalues of $J(x^{*})$ as functions of $\epstilde$, at fixed $\beta_0=600$. \emph{Left:} real and imaginary parts of all four eigenvalues over $\epstilde\in[0,2]$. \emph{Right:} zoom on the complex-conjugate pair whose real part changes sign, showing the two transversal crossings that mark the Hopf bifurcation points}
    \label{fig:eigenvalues}
\end{figure}

Figures~\ref{fig:3d_trajectories} and~\ref{fig:phase_portraits} show the consequences of this loss and recovery of stability in the phase space of the system for the transmission-driven model at one representative value of $\tilde\eps$ from each of the three regimes identified in Figure~\ref{fig:eigenvalues}. At $\tilde\eps=0.5$, trajectories spiral directly into the endemic equilibrium $\EE$. At $\tilde\eps=1.5$, inside the bifurcation region, the equilibrium is unstable and trajectories are instead attracted to a closed orbit encircling it, producing sustained oscillation in prevalence. At $\tilde\eps=1.7$, stability is recovered and trajectories once again spiral into the equilibrium.

This sustained oscillation is notable because it arises without any seasonal forcing in the model; the periodicity is generated purely by the mutation feedback mechanism, in contrast to
the externally imposed seasonality typically invoked to explain recurrent epidemic waves.
\begin{figure}
    \centering
    \includegraphics[width=0.95\textwidth]{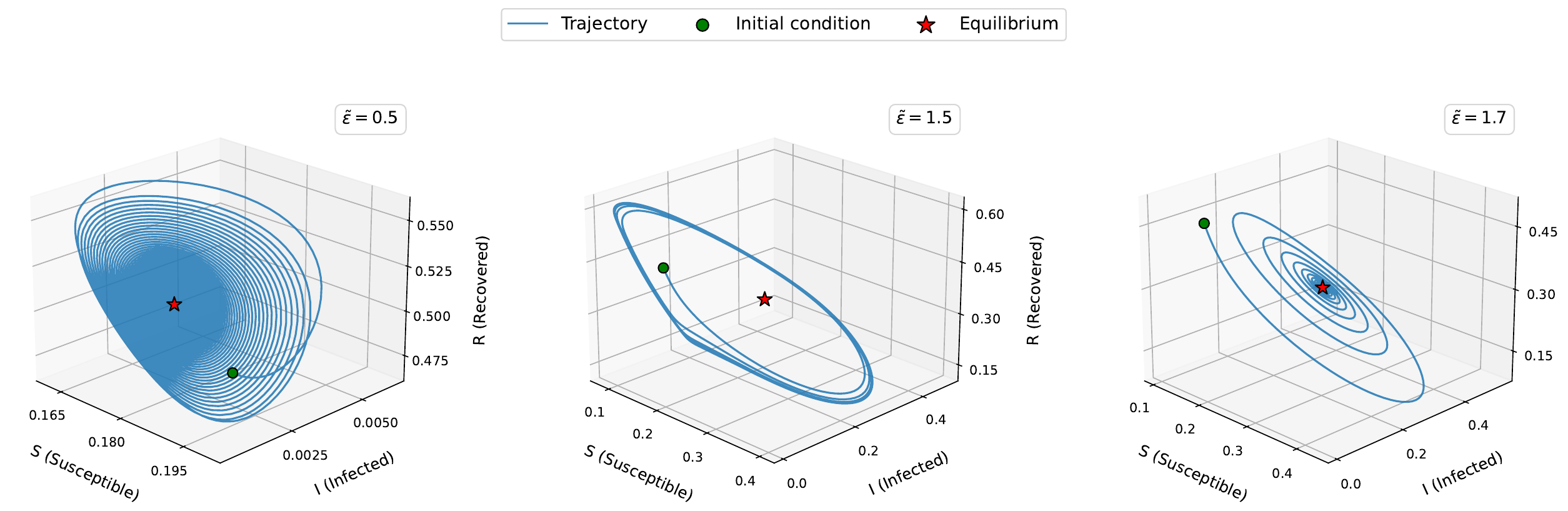}
    \caption{(see Section \ref{sec: stability and bifurcation}) Trajectories in the $(S,I,R)$ space for three representative values of $\tilde\varepsilon$, spanning the weak, moderate and strong feedback regimes for the transmission-driven model}
    \label{fig:3d_trajectories}
\end{figure}

\begin{figure}
    \centering
    \includegraphics[width=0.8\textwidth]{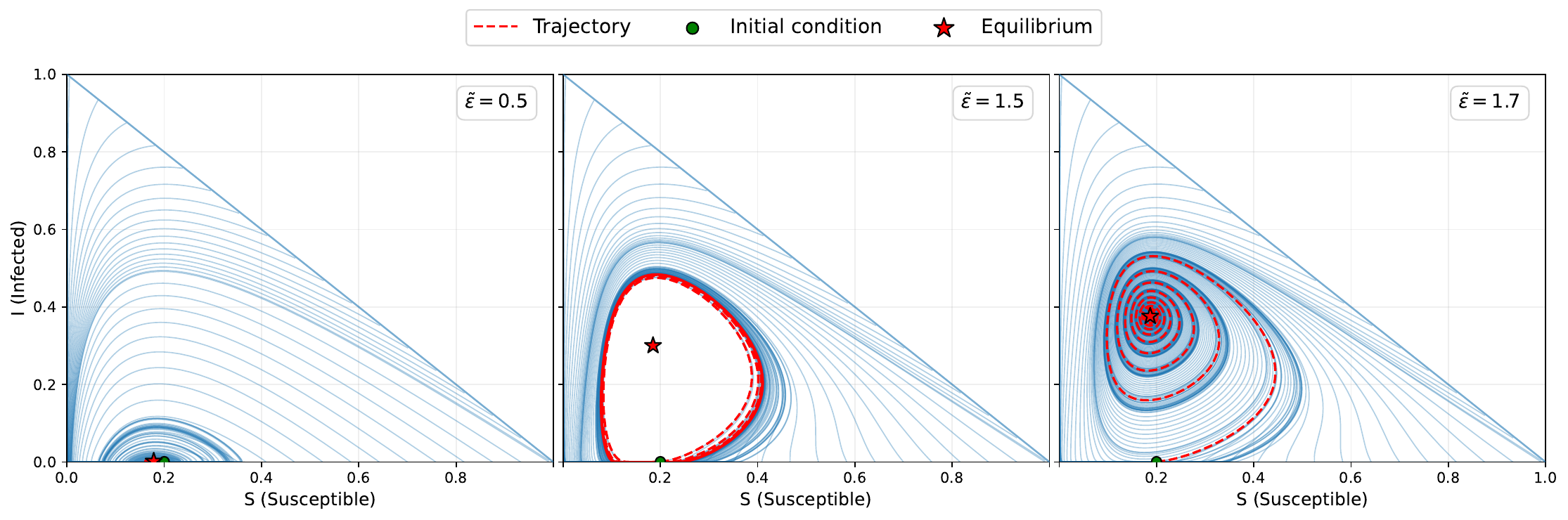}
    \caption{(see Section \ref{sec: stability and bifurcation}) Phase portraits for the same three values of $\epstilde$ as Figure~\ref{fig:3d_trajectories}, projected onto the $(S,I)$ plane for the transmission-driven model}
    \label{fig:phase_portraits}
\end{figure}

For the prevalence-driven model, no comparable transition occurs along the diagonal.
Trajectories and phase portraits converge directly to the endemic equilibrium at every value of $\tilde\varepsilon$ tested, as shown in Figures~\ref{fig:3d_trajectories_I} and
\ref{fig:phase_portraits_I} for three representative values of
$\tilde\varepsilon$.

\begin{figure}
    \centering
    \includegraphics[width=0.95\textwidth]{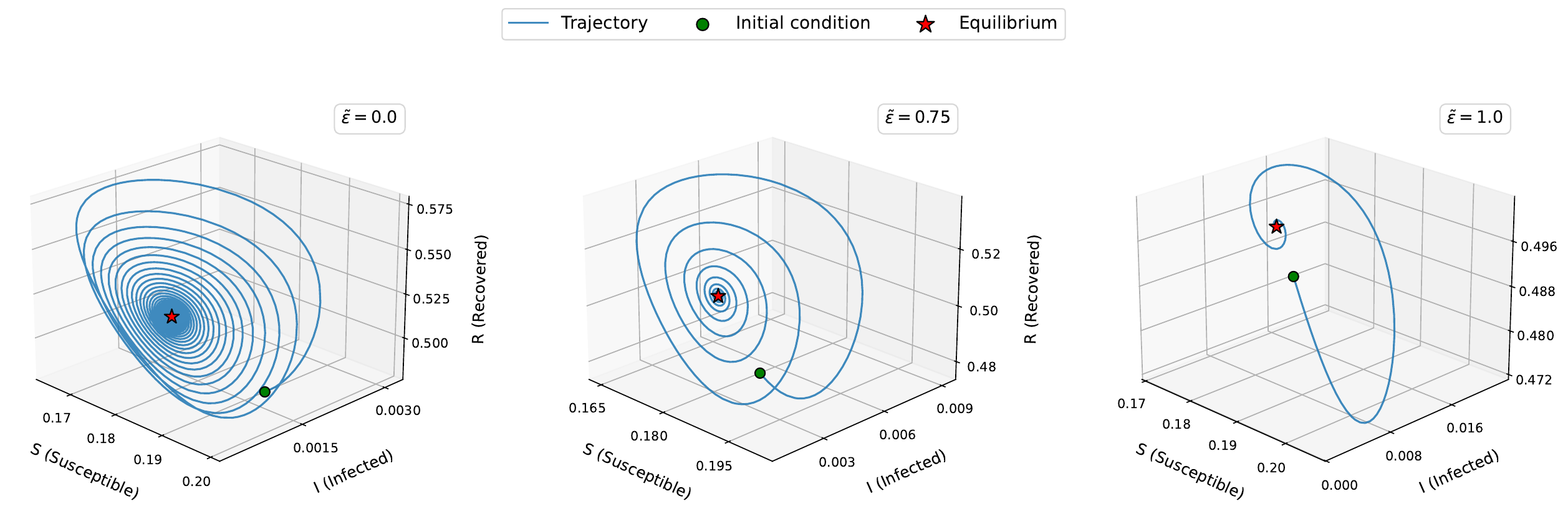}
    \caption{(see Section \ref{sec: stability and bifurcation}) Trajectories in $(S,I,R)$ space at $\tilde\varepsilon = 0, 0.75, 1$ for the prevalence-driven model. All trajectories converge to the endemic equilibrium; trajectories for $\tilde\varepsilon>1$ behave similarly to those for $\tilde\varepsilon=1$} 
    \label{fig:3d_trajectories_I}
\end{figure}

\begin{figure}
    \centering
    \includegraphics[width=0.8\textwidth]{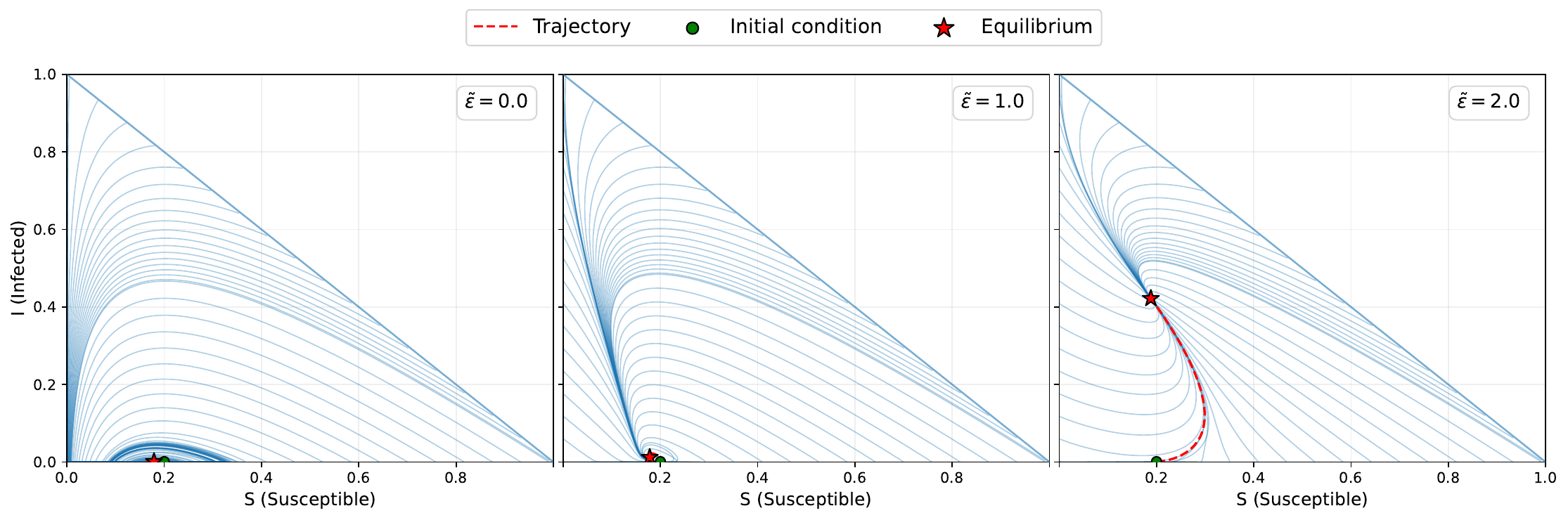}
    \caption{(see Section \ref{sec: stability and bifurcation}) Phase portraits in the $(S,I)$ plane at $\tilde\varepsilon = 0, 1, 2$ for the prevalence-driven model}
    \label{fig:phase_portraits_I}
\end{figure} 

The results presented in the remainder of this section can be summarized as follows.
\begin{itemize}
    \item For weak enough or strong enough mutation feedback, i.e., $0<\eps\ll 1$ or $\eps \gg 1$,
    the endemic equilibrium is unique and
    asymptotically stable (for both transmission-
    and prevalence-driven models).
    \item For the transmission-driven model, when mutation and waning effects balance each other, i.e., $\eps=\mathcal{O}(1)$, 
    numerical evidence strongly suggests that the endemic equilibrium is unique and $I^*$
    is a monotonically increasing function of $\eps$. Moreover, a detailed bifurcation study shows that the endemic equilibrium can be
    unstable in this regime.
    \item For the prevalence-driven model, by contrast, we find no such loss of stability along the diagonal $\eps_1 = \eps_2$ for any parameter values tested; the endemic equilibrium remains stable throughout.
\end{itemize}

\subsection{Weak mutation feedback}
\label{sec:weak_feedback}

\begin{prop}
\label{prop: small epsilon}
    Let $\BRN>1$ and let $g$ satisfy Definition \ref{dfn:funcg}. Then there exists an ${{\eps_0}}>0$ such that the SIRCm system \eqref{eq:SIRCm} has 
    a unique, asymptotically stable, endemic equilibrium for $\eps\in[0,{\eps_0})$.
\end{prop}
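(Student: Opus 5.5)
The plan is a perturbation argument from the SIRC model at $\eps=0$, read as $\eps=(\eps_1,\eps_2)$ with $\|\eps\|_\infty<\eps_0$. It has three ingredients: an implicit-function-theorem branch of equilibria, continuity of eigenvalues along that branch, and a compactness argument for global uniqueness. To avoid the degenerate direction coming from conservation, I will work on the invariant affine hyperplane $S+I+R+C=1$. Concretely, eliminate $S=1-I-R-C$ and study the reduced vector field $F(I,R,C;\eps)$ on $\R^3$. Since $g$ is $\mathcal{C}^\infty$, $F$ is smooth jointly in the state and in $(\eps_1,\eps_2)$.

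\textbf{Step 1 (the base point).} At $\eps=0$, SIRCm is the SIRC model of \cite{casagrandi2006}. Its endemic equilibrium $\EE[0]$ is unique. One way to see this: with $I^*>0$ fixed, the equations $\dot R=\dot C=0$ and \eqref{eq: relation SC endemic} are linear in $(S,R,C)$, and substituting into conservation gives a single scalar equation in $I$ with a unique root in $(0,\infty_{}]$ of admissible sign. I will invoke, as the standing hypothesis inherited from the SIRC theory, that $\EE[0]$ is hyperbolic and locally asymptotically stable. That is, all eigenvalues of the reduced Jacobian $D_xF(\EE[0];0)$ have negative real part, and in particular this matrix is invertible.

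This step is the main obstacle. The cleanest route is to cite Casagrandi et al.\ for the stability of the unforced SIRC endemic equilibrium. If a self-contained argument is wanted, I would first try a Routh--Hurwitz check of the $3\times3$ characteristic polynomial at $\eps=0$. I would use \eqref{eq: relation SC endemic} and \eqref{eq: relation ISC endemic} to simplify the entries and show the Hurwitz determinants are positive.

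\textbf{Step 2 (local branch and stability).} Since $D_xF(\EE[0];0)$ is invertible, the implicit function theorem gives a neighborhood $U$ of $\EE[0]$, a number $\eps_0'>0$, and a smooth map $\eps\mapsto\EE$ such that, for $\|\eps\|<\eps_0'$, $\EE$ is the unique zero of $F(\cdot;\eps)$ in $U$. Shrinking $\eps_0'$ if necessary, $\EE$ stays inside $\mathrm{int}(\Om)$, so it is an endemic equilibrium. The eigenvalues of $D_xF(\EE;\eps)$ depend continuously on $\eps$, so they keep negative real parts for $\|\eps\|$ small. Because the transverse eigenvalue $-\mu$ along the conservation direction is negative, this gives local asymptotic stability of $\EE$ for the full system.

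\textbf{Step 3 (global uniqueness).} Argue by contradiction. Suppose there are $\eps^{(n)}\to0$ and endemic equilibria $y_n\notin U$. By Lemma~\ref{lemma:bounds EE}, $I(y_n)\ge \frac{\mu}{\mu+\alpha}\infEE[\infty]>0$, and these bounds are uniform in $\eps$. Since $\Om$ is compact, a subsequence converges to some $y\in\Om$ with $I(y)>0$. By continuity of the vector field in $\eps$, $y$ is an equilibrium of SIRC. Hence $y=\EE[0]$ by the uniqueness in Step 1. Then $y_n\in U$ for large $n$, which is a contradiction. So some $\eps_0\le\eps_0'$ exists for which every endemic equilibrium lies in $U$, and it therefore coincides with $\EE$. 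Theorem~\ref{prop:endemic-equilibrium} guarantees existence independently, which is consistent with this, and the proposition follows.
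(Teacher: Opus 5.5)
Your proposal is correct and follows essentially the paper's route. Like the paper, you take the hyperbolicity and stability of the SIRC endemic equilibrium from Casagrandi et al., continue it by the implicit function theorem, keep stability by continuity of eigenvalues, and obtain global uniqueness by compactness and contradiction. The differences are cosmetic: you work in reduced coordinates on $S+I+R+C=1$, which is unnecessary since the full $4$D Jacobian only adds the eigenvalue $-\mu$. You also exclude a disease-free limit using the uniform lower bound on $I^*$ from Lemma~\ref{lemma:bounds EE}, whereas the paper uses the identity $S+\sigma C=\BRN^{-1}$, which is the same fact underlying that bound.
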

\begin{proof}
    Let us write the right-hand side of SIRCm \eqref{eq:SIRCm} as 
    \begin{align*}
        F(x,\eps) \coloneqq f(x) + \eps g(x) \begin{bmatrix}
            \gamma C\\
            0\\
            -\delta R\\
            \delta R -\gamma C
        \end{bmatrix},
    \end{align*}
    where $f(x)$ is the right-hand side of SIRC.
    Since $\BRN>1$, we know that SIRC has a unique, asymptotically stable endemic equilibrium $\EE[0]$, as shown in \citet[Section 3]{casagrandi2006}. That is, 
    \begin{align}
        \label{eq:stability endemic SIRC}
        F(\EE[0],0)=0, \quad \mathrm{Re}(\lambda_{J_f}(\EE[0]))<0.
    \end{align}
    In particular, we have that ${\partial_x F}(\EE[0],0)=J_f(\EE[0])$ has non-zero determinant.
    Therefore, the implicit function theorem (IFT) guarantees the existence of $\eps_{\text{IFT}}>0$, a neighborhood of $\EE[0]$, $U\subset \Omega$, and a unique continuously differentiable
    mapping $\psi^*:[0,\eps_{\text{IFT}})\to U$ such that $\psi^*(0)=\EE[0]$ and
    \begin{align*}
        F(\psi^*(\eps),\eps) = 0,
    \end{align*}
    for $\eps \in [0,\eps_{\text{IFT}})$.
    
    Let us now show the existence of some $\underline{\eps}>0$, such that
    $\psi^*(\eps)$ is the only endemic equilibrium in $\Omega$ (rather than just in $U$) for $0<\eps<\underline{\eps}$.
    For the sake of contradiction, we assume the opposite, i.e., 
    there is a sequence $\eps_n\subset \real^+$ and 
    $x^*_{\eps_n}\subset \Omega$ of
    endemic equilibria such that $\eps_n\to 0$ as $n\to+\infty$ and $x^*_{\eps_n}\neq \psi^*(\eps_n)$ 
    for all $n\in \naturals$. Since $\Omega$ is compact, we can extract a convergent subsequence of 
    $x^*_{\eps_n}$, which we still index by $n$ for simplicity, with limit $x^*\in\Omega$.
    By the continuity of $F$, we  have
    \begin{align*}
        F(x^*,0)=\lim_{n\to\infty}F(x_{\eps_n},\eps_n)=0.
    \end{align*}
    Therefore $x^*\in\{\EE[0], \DFE \}$, since SIRC only possesses two equilibria.
    From \eqref{eq: relation SC endemic}, we know that the continuous 
    function $\theta(x)\coloneqq S+\sigma C$ satisfies $\theta(x^*_{\eps_n})=\BRN^{-1}$ for every 
    $n\in \naturals$.
    If $x^*=\DFE$, we get $1=\theta(\DFE)=\lim_{n\to \infty}
    (x^*_{\eps_n})=\BRN^{-1}$, which is a contradiction.
    Alternatively, if $x^*=x_0^*$, then there is a neighborhood $V\subset\Omega$
    of $x^*_0$ such that $x^*_{\eps_n}\in V\cap U$ and $\eps_n<\eps_{\text{IFT}}$ for $n$ large enough.
    However, $\psi^*(\eps)$ is unique in $U\cap V$ for $\eps<\eps_{\text{IFT}}$, and we get a contradiction.

    Finally, let us take $0<\eps<\underline{\eps}$ and denote by
    $\lambda_\eps$ the eigenvalues of $\partial_x F(\psi^*(\eps),\eps)$,
    given by the roots of the polynomial $p_\eps(\lambda)\coloneqq\mathrm{det}\left(\lambda \,\mathrm{Id}-\partial_x F(\psi^*(\eps),\eps)\right)$.
    Since $\psi^*$ is smooth, the coefficients of $p_\eps$, and therefore its roots,
    depend continuously in $\eps$.
    Hence, from \eqref{eq:stability endemic SIRC},
    there is some
    ${\eps_0}>0$ such that $\mathrm{Re}(\lambda_\eps)<0$ for all $\eps<{\eps_0}$.
\end{proof}
\begin{rem}
Observe that the proof of Proposition \ref{prop: small epsilon}
extends unchanged to the general two-parameter $(\eps_1,\eps_2)$
setting.
\end{rem}

\begin{rem}\label{cubicpolyremark}
For the prevalence-driven model, it is possible to obtain
a quantitative threshold in $\eps$ for the existence of
a unique endemic equilibrium.
Given $\eps>0$, one can verify that once the prevalence $\infEE$ of the endemic equilibrium
is obtained, every other component of the endemic equilibrium can be computed.
Moreover, it can be shown that the prevalence of the endemic equilibrium satisfies
the following algebraic equation 
\begin{align}
\label{eq: incidence poly}
    \poly(\infEE)=a_3 {\infEE}^3+ a_2 {\infEE}^2 +a_1 {\infEE} +a_0 = 0,
\end{align}
where 
\begin{align*}
  a_3 &= \beta\delta\varepsilon\,(\gamma\varepsilon+\beta\sigma), \\
    a_2 &=\alpha  (\beta +\gamma  \eps ) (\beta +\delta  \eps )+\beta ^2 (\mu -\delta  \sigma  (\eps
   -1))+\beta  \eps  (\gamma  ((2-\eps) \delta +\mu )+\delta  \mu  (\sigma +1))+\gamma  \delta  \mu  \eps
   ^2, \\
  a_1 &= \alpha  \beta  (\gamma +\delta +2 \mu )+\alpha  \eps  (\gamma  (2 \delta +\mu )+\delta  \mu )-\beta
   ^2 (\delta  \sigma +\mu )+\\
   &\quad \beta  \gamma  (-2 \delta  \eps +\delta +\mu -\mu  \eps )+\beta  \mu  (\delta  (\sigma
   -\eps +1)+2 \mu )+\gamma  \mu  \eps  (2 \delta +\mu )+\delta  \mu ^2 \eps, \\
  a_0 &= (\gamma +\mu ) (\delta +\mu ) (\alpha -\beta +\mu ).
\end{align*}
Descartes' rule of signs states that the number of sign changes
in the coefficients of $\poly$ will bound the number of positive 
roots from above.
Since $\BRN=\frac{\beta}{\alpha+\mu}>1$, $a_0<0$ and, as long
as $\eps>0$, we also have $a_3>0$. 
Finally, it suffices to require $\eps\leq 1$ for $a_2>0$.
Then, $\poly$ will have a unique positive root $\infEE$
(note that the sign of $a_1$ is irrelevant as there
will  be a single sign change in any case).
From the existence results in Section \ref{sec: persistence theory}, we know
that $x^*\in \Omega$.
In particular, under the scaling \eqref{eq:epstilde}, the additional
condition for uniqueness becomes
\begin{align*}
    \epstilde \leq \infEE[0],
\end{align*}
where $\infEE[0]$ is the prevalence of the
endemic equilibrium of SIRC.

Unfortunately, these computations quickly become intractable
for the transmission-driven model, where
the polynomial $\poly$ 
becomes a quartic with intricate coefficients
(see Appendix \ref{app:quartic}).
\end{rem}

\subsubsection{Expansion of $\infEE$ for small $\eps$}
\label{sec:low_prev_series}
It is desirable to obtain an approximation of the endemic equilibrium
for small values of $\eps$.
For this, we express the constraint \eqref{eq: incidence poly}
as a quadratic polynomial in $\eps$,
\begin{align}
\label{eq:polynomial eps I}
     \poly_0(\infEE)+\eps \poly_1(\infEE) +\eps^2 \poly_2(\infEE) =0,
\end{align}
where $\poly_0$ is a polynomial of degree 2 in $\infEE$, while $\poly_1$ and $\poly_2$ are polynomials of degree 
at most 4 in $\infEE$, depending on the choice of $g$.
From \cite{casagrandi2006}, we know there is a unique 
$I^*_0\in(0,1]$
such that
\begin{align*}
    \poly_0(I^*_0) = a_0+a_1 \infEE[0] +a_2 (\infEE[0])^2 = 0.
\end{align*}
Then 
\begin{align*}
    I^*_0 = \frac{-a_1\pm \sqrt{D}}{2a_2},\qquad D=a_1^2-4a_0 a_2,
\end{align*}
and
\begin{align*}
    \poly_0'(\infEE[0]) = 2a_2 I^*_0+a_1=\pm \sqrt{D}\ne 0, \quad \poly_0''(\infEE[0]) = 2 a_2, \quad \poly_0'''(\infEE[0]) =0;
\end{align*}
the discriminant $D$ above does not vanish for small positive $\varepsilon$ (see Remark \ref{cubicpolyremark}).

For a small enough $\eps$, 
let us consider a (convergent Taylor) series representation of $\infEE$ (the component $I$ of $\psi^*$ from 
the proof of Proposition \ref{prop: small epsilon}):
\begin{align*}
    \infEE = \infEE[0]+ r(\eps)\coloneqq\infEE[0]+ \sum_{k\in \naturals^+} r_k \eps^k,
\end{align*}
which we can plug in \eqref{eq:polynomial eps I}
to collect terms of increasing
powers in $\eps$. 
Given any power series $f(z) = \sum_{k\in\naturals} f_k z^k$, we define the coefficient extractor operator 
$[z^{{n}}]$
\citet[Ch. I.1]{flajolet2009} as
\begin{align*}
    [z^{n}]f(z)\coloneqq f_{n}.
\end{align*}
Since $\poly_0$, $\poly_1$, and $\poly_2$ are polynomials, we can write
\begin{align*}
    \poly_0(\infEE)
    = \poly_0'(\infEE[0]) r +a_2 r^2,\quad 
    \poly_1(\infEE) =  \sum_{k=0}^4 \frac{r^k}{k!}\poly_1^{(k)}(\infEE[0]),\quad
    \poly_2(\infEE) =  \sum_{k=0}^4 \frac{r^k}{k!}\poly_2^{(k)}(\infEE[0]).
\end{align*}
Substituting into \eqref{eq:polynomial eps I} we get
\begin{subequations}
\begin{align}
    \poly_0(\infEE) &= -\eps \poly_1(\infEE) -\eps^2 \poly_2(\infEE)\\
    \Rightarrow
    r &=-\poly_0'(
    \infEE[0])^{-1} \left(a_2 r^2 +\eps \sum_{k=0}^4 \frac{r^k}{k!}\poly_1^{(k)}(\infEE[0]) + \eps^2\sum_{k=0}^4 \frac{r^k}{k!}\poly_2^{(k)}(\infEE[0])\right)\\
    \Rightarrow r_n &= -\poly_0'(
    \infEE[0])^{-1} [\eps^n]\left(a_2 r^2 +\eps \sum_{k=0}^4 \frac{r^k}{k!}\poly_1^{(k)}(\infEE[0]) + \eps^2\sum_{k=0}^4 \frac{r^k}{k!}\poly_2^{(k)}(\infEE[0])\right). \label{eq: recursion expansion coeffs}
\end{align}
\label{eq: general expression series coeffs}
\end{subequations}
Using equation \eqref{eq: recursion expansion coeffs},
it is possible to recursively get analytic---although complicated---expressions
for the coefficients $r_n$. 
The resulting approximations (of order 2, 3, and 4) of $\infEE$ are displayed
in the right panel of Figures \ref{fig:eps1_eps2_prevalence} and \ref{fig:eps1_eps2_prevalence_I} for the transmission-driven (\ref{t-dv_formula}) and prevalence-driven (\ref{p-dv_formula})
models, respectively.

\subsection{Moderate mutation feedback---bifurcation region}
\label{sec:moderate feedback}

Figure \ref{fig:prevalence_comparison} traces the endemic prevalence as a function of $\beta_0$ for several values of $\epstilde$, for the transmission-driven model. At small $\epstilde$ (left panel), each curve rises from $\BRN = 1$ and remains stable throughout. At larger $\epstilde$ (right panel), each curve develops an unstable segment, bounded by two Hopf bifurcation points. 
\begin{figure}
    \centering
    \includegraphics[width=0.95\textwidth]{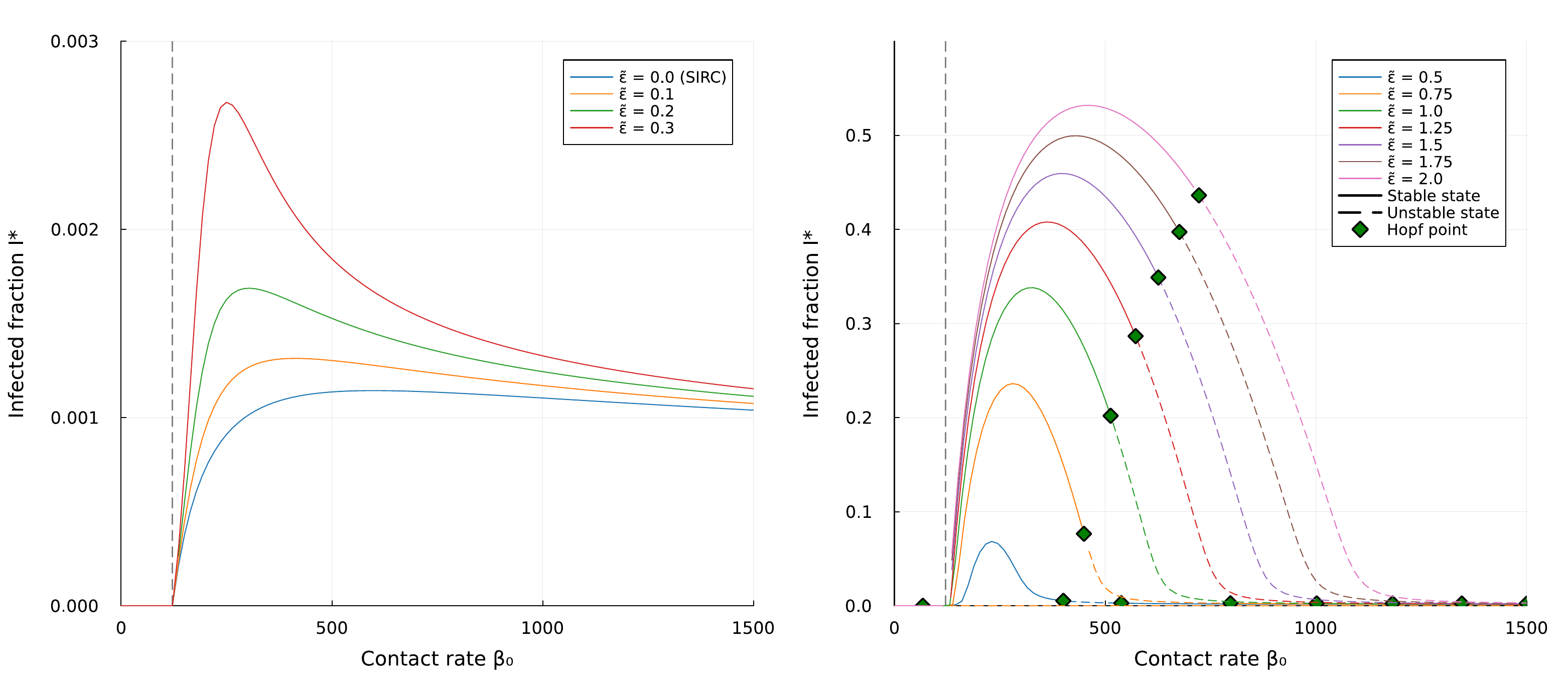}
    \caption{(see Section \ref{sec:moderate feedback}) Endemic prevalence  $I^*$ of the transmission-driven model against contact rate $\beta_0$ under varying mutation feedback strengths ($\epstilde$). Solid and dashed curves denote stable and unstable equilibria, respectively. Note the scale difference in the $y$-axis}
    \label{fig:prevalence_comparison}
\end{figure}

The analogous plot for the prevalence-driven model (Figure \ref{fig:prevalence_comparison_I}) shows no such unstable segments, and each curve retains the same qualitative shape as the SIRC model, simply attaining a higher peak prevalence as $\epstilde$ increases. \begin{figure}
    \centering
    \includegraphics[width=0.45\textwidth]{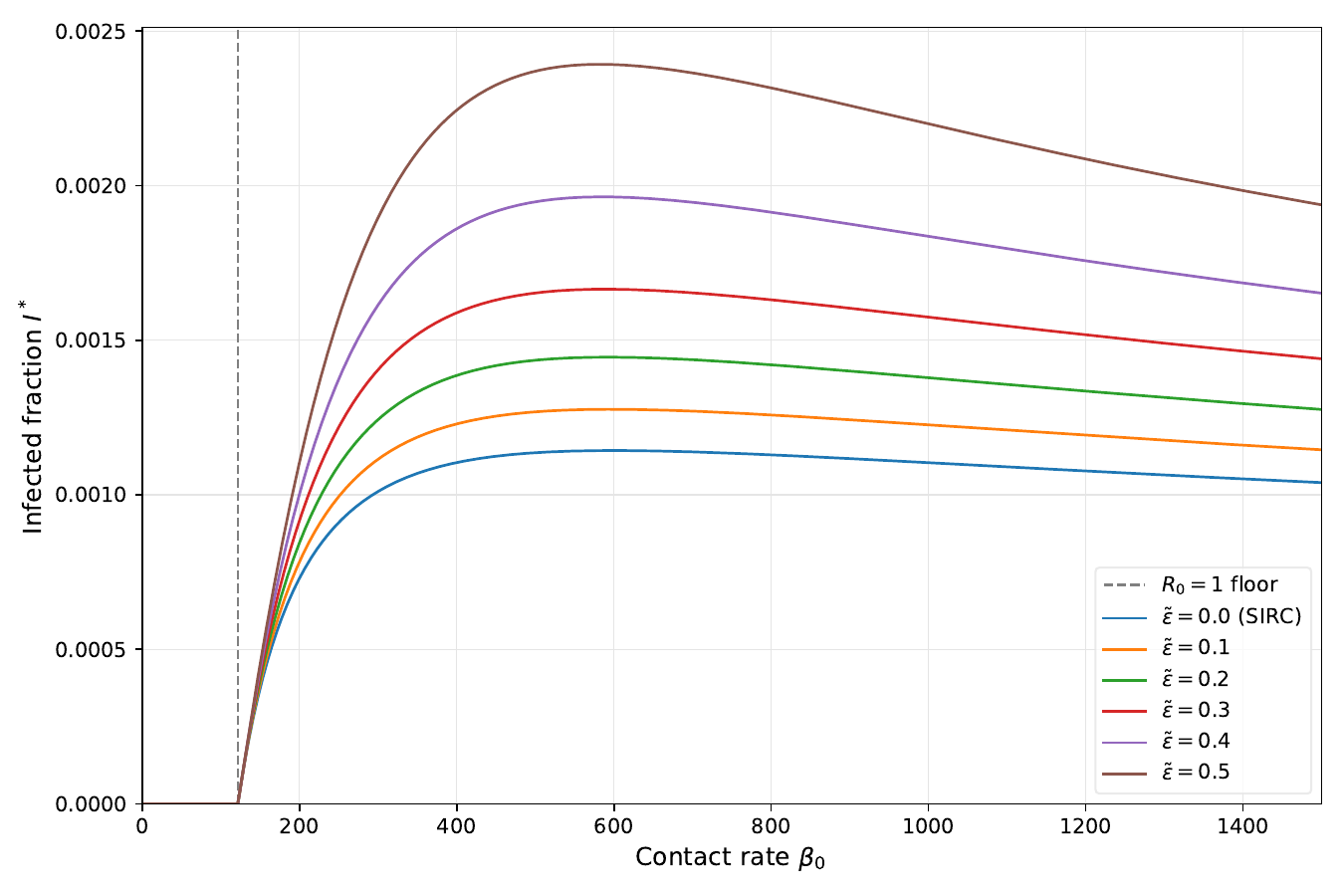}
    \caption{(see Section \ref{sec:moderate feedback}) Endemic prevalence $I^*$ against contact rate $\beta_0$ for different values of $\epstilde$}
    \label{fig:prevalence_comparison_I}
\end{figure}

The eigenvalue sweep of Figure~\ref{fig:eigenvalues} was computed at a single, fixed pair of parameter values, $\beta_{0}=600$ and $\sigma=0.07874$. Two natural questions follow: how do the two Hopf bifurcation points move as $\beta_{0}$ and $\sigma$ vary, and does the qualitative three-regime picture persist across the ranges of these parameters relevant to influenza-like transmission? Figure~\ref{fig:hopf_surface} answers this by tracing the Hopf bifurcation curves via numerical continuation.
\begin{figure}
    \centering    \includegraphics[width=1\textwidth]{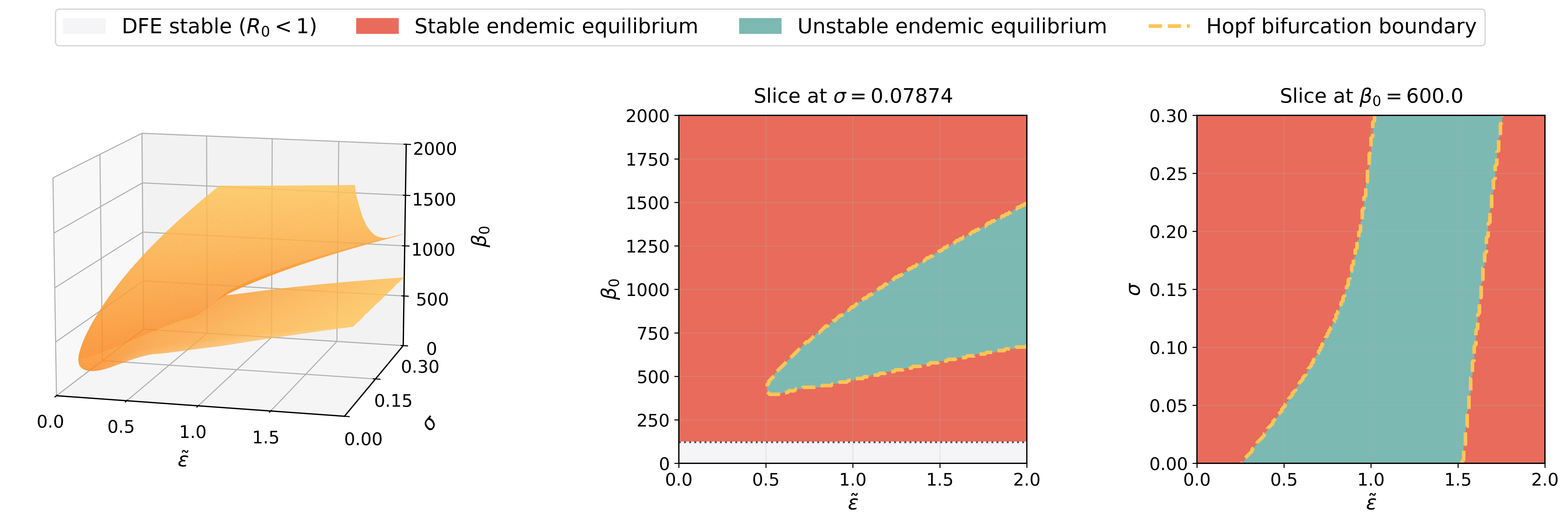}
    \caption{(see Section \ref{sec:moderate feedback}) Stability regions of the endemic equilibrium of the transmission-driven model across the three-parameter space $(\epstilde,\beta_0,\sigma)$, computed via numerical continuation of the Hopf bifurcation curve}
    \label{fig:hopf_surface}
\end{figure}

Figure \ref{fig:eps1_eps2_hopf_surface} shows that the two Hopf bifurcation points along the diagonal at $\beta_0 = 600$ occur at $\epstilde \approx 0.63$ and $\epstilde \approx 1.58$. Figure \ref{fig:timeseries_SI} illustrates the resulting change in the dynamics of $I(t)$ directly. We select $\epstilde = 0.25$, well below the first bifurcation point, where the endemic equilibrium is stable and $I(t)$ settles into a fixed prevalence above SIRC after a few damped oscillations. We then take $\epstilde = 1.57$, just below the second bifurcation point, so the equilibrium is still unstable and the trajectory shows sustained, large-amplitude oscillations. Finally, we take $\epstilde = 1.59$ just past the second bifurcation point, where stability has only just been recovered and $I(t)$ decays again toward the endemic equilibrium.
\begin{figure}
    \centering \includegraphics[width=1\textwidth]{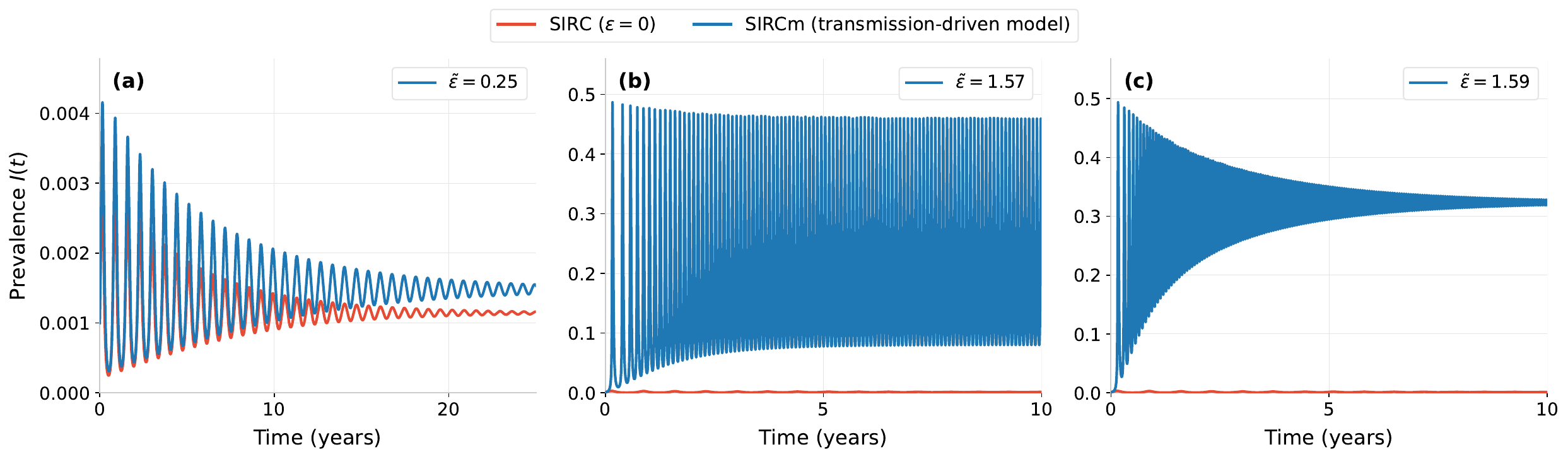}
    \caption{(see Section \ref{sec:moderate feedback}) Time series of the infected fraction $I(t)$ for the transmission-driven model at $\beta_0 = 600$, compared against SIRC}
    \label{fig:timeseries_SI}
\end{figure}

For the prevalence-driven model, no comparable bifurcation structure exists. Figure \ref{fig:eps1_eps2_hopf_surface_I} shows that the diagonal $\epstilde_1 = \epstilde_2$ never enters the unstable region for any $\beta_0 \in [200,2000]$. Figure \ref{fig:timeseries_I} illustrates instead how the rate of convergence to equilibrium changes with feedback strength. As $\epstilde$ increases, the endemic prevalence rises above its SIRC baseline, while convergence to the equilibrium becomes faster. 
\begin{figure}
    \centering \includegraphics[width=1\textwidth]{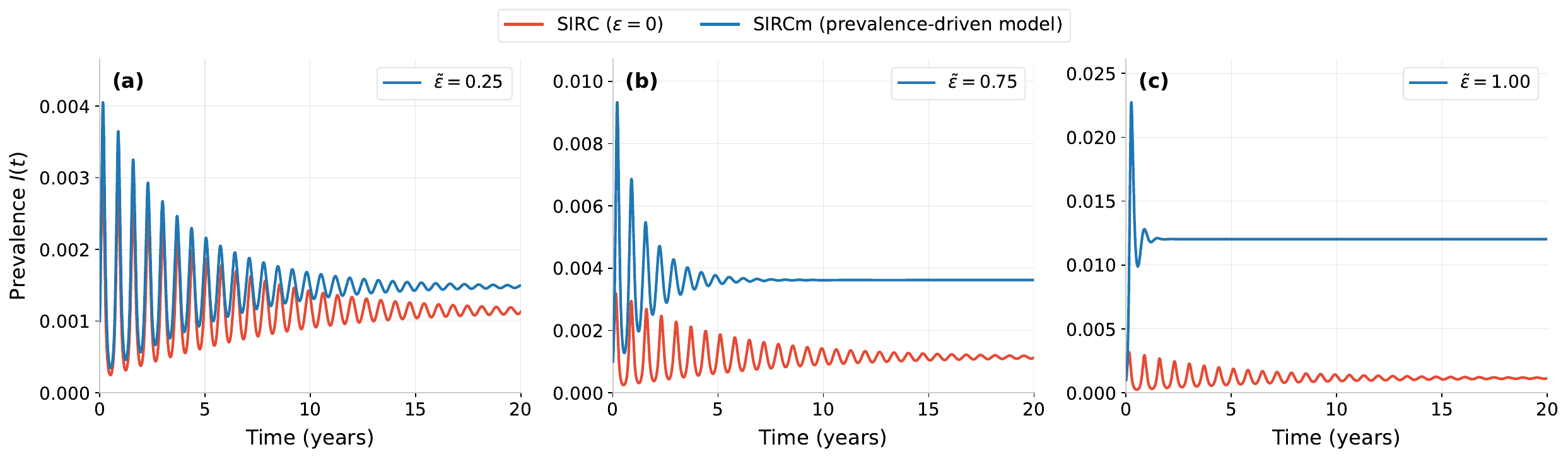}
    \caption{(see Section \ref{sec:moderate feedback}) Time series of the infected fraction $I(t)$ for the prevalence-driven model at $\beta_0 = 600$, compared against SIRC}
    \label{fig:timeseries_I}
\end{figure}

\subsection{Strong mutation feedback---beyond the bifurcation region}
\label{sec:strong_feedback}
Let us study the behavior of the SIRCm model \eqref{eq:SIRCm} for $\eps\gg 1$. 
Recall that, for every $\eps$, the endemic equilibrium satisfies
\begin{align}
\label{eq: R and C endemic}
    R^* = \frac{\alpha I^* + (1-\sigma )\beta C^* I^*}{\mu+(1+\eps g)\delta},\quad
    C^* = \frac{\delta (1+\eps g)R^*}{\beta I^*+\mu+(1+\eps g)\gamma}.
\end{align}
Then, the following lemma provides us with some useful uniform (in $\eps$)
estimates on the endemic equilibrium. 
In particular, we establish that $R^*$ and $C^*$ must vanish
as $\eps\to\infty$.
\begin{lemma}
\label{lemma:uniform scaling}
Let $\BRN>1$ and let $g$ satisfy Definition \ref{dfn:funcg} and define 
\begin{align*}
    \mathcal{W}\coloneqq\left[\tfrac{\mu}{\mu+\beta},\,\BRN^{-1}\right]
    \times\left[\tfrac{\mu}{\mu+\alpha}\infEE[\infty],\,\infEE[\infty]\right],
    \qquad
    g_{\min}\coloneqq\min_{(S,I)\in\mathcal{W}} g(x). 
\end{align*}
Then $g_{\min}>0$ and, for every $\eps>0$, every endemic equilibrium
$\EE=(S^*,I^*,R^*,C^*)$ of SIRCm \eqref{eq:SIRCm} satisfies
\begin{gather}
g(\EE)\ \geq\ g_{\min}\ >\ 0; \label{eq: gmin bounded} \\
0<C^*\leq \tfrac{\delta}{\gamma}\ R^*\leq \epsinv \frac{\alpha+\beta}{{\gamma}\,g_{\min}}. \label{eq: rfast cfast vanish}
\end{gather}
As a consequence $R^*,C^*=\bigo(\eps^{-1})$.
\end{lemma}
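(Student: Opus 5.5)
The plan has two parts: first show $g_{\min}>0$ and $g(\EE)\ge g_{\min}$, then read the bounds on $R^*$ and $C^*$ off the equilibrium relations \eqref{eq: R and C endemic}.

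\textbf{Positivity of $g_{\min}$.} By Definition~\ref{dfn:funcg}, $g$ depends only on $(S,I)$, so $g_{\min}$ is a minimum of a continuous function over the compact rectangle $\mathcal{W}$; it is attained at some point $(S_0,I_0)\in\mathcal{W}$. Since $\BRN>1$, we have $\infEE[\infty]>0$, and both lower endpoints $\mu/(\mu+\beta)$ and $\tfrac{\mu}{\mu+\alpha}\infEE[\infty]$ are strictly positive. Hence $S_0I_0>0$, and property 4 of Definition~\ref{dfn:funcg} gives $g(S_0,I_0)\neq0$. Property 2 gives $g\ge0$, so $g_{\min}>0$.

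One technical point needs care. Definition~\ref{dfn:funcg} only defines $g$ on $\Om$, and not every pair in $\mathcal{W}$ need satisfy $S+I\le1$. I would interpret $g_{\min}$ as the minimum over the points of $\mathcal{W}$ realizable in $\Om$, i.e.\ $\mathcal{W}\cap\{S+I\le 1\}$. This set is still compact, and it is nonempty because it contains the endemic equilibrium, since $S^*+I^*\le1$. For the two concrete choices $g=SI$ and $g=I$, the function extends naturally to all of $\mathcal{W}$, so nothing changes there.

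\textbf{The bound $g(\EE)\ge g_{\min}$.} This follows immediately from Lemma~\ref{lemma:bounds EE}, which places $(S^*,I^*)$ in $\mathcal{W}$ for every $\eps\ge0$. This gives \eqref{eq: gmin bounded}.

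\textbf{Bounds on $C^*$ and $R^*$.} From Proposition~\ref{prop:interior-equilibrium}, $C^*>0$. The second relation in \eqref{eq: R and C endemic} gives
\[
C^*=\frac{\delta(1+\eps g)R^*}{\beta I^*+\mu+(1+\eps g)\gamma}\le \frac{\delta(1+\eps g)R^*}{(1+\eps g)\gamma}=\frac{\delta}{\gamma}R^*,
\]
obtained by dropping the nonnegative terms $\beta I^*+\mu$ from the denominator. For $R^*$, the first relation gives
\[
R^*\le \frac{\alpha I^*+(1-\sigma)\beta C^*I^*}{(1+\eps g)\delta}.
\]
The numerator is at most $\alpha+\beta$, since $I^*,C^*\le1$ and $1-\sigma\le1$. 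The denominator is at least $\eps g_{\min}\delta$. Hence
\[
\tfrac{\delta}{\gamma}R^*\le \frac{\alpha+\beta}{\gamma\,\eps\, g_{\min}},
\]
which is \eqref{eq: rfast cfast vanish}, and $R^*,C^*=\bigo(\eps^{-1})$ follows since the constant is independent of $\eps$.

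\textbf{Main obstacle.} The inequality steps are routine. The only delicate point is the domain issue in the positivity of $g_{\min}$, namely that $\mathcal{W}$ may not lie inside $\Om$; restricting to $\mathcal{W}\cap\{S+I\le1\}$ as above resolves it.
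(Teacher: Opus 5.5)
Your proof is correct and follows the paper's argument essentially step for step. You use compactness of $\mathcal{W}$ together with properties 2 and 4 of Definition~\ref{dfn:funcg} to get $g_{\min}>0$, Lemma~\ref{lemma:bounds EE} to place $(S^*,I^*)$ in $\mathcal{W}$, and the two identities in \eqref{eq: R and C endemic} (shrinking the denominators, bounding the numerator by $\alpha+\beta$) for the estimates on $R^*$ and $C^*$.

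The domain issue you flag does not arise. Every $(S,I)\in\mathcal{W}$ satisfies $S+I\le \BRN^{-1}+(1-\BRN^{-1})=1$, so $\mathcal{W}$ already lies in the $(S,I)$-projection of $\Om$. Your restriction to $\mathcal{W}\cap\{S+I\le 1\}$ is therefore just $\mathcal{W}$ itself.
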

 
\begin{proof}
By Lemma \ref{lemma:bounds EE}, $(S^*,I^*)\in\mathcal{W}$ for every $\eps>0$,
and \eqref{eq: gmin bounded}  follows from the fact
that $\mathcal{W}$ is compact (hence $g_{\min}$ is attained) and $S,I>0$ in  $\mathcal{W}$.

For \eqref{eq: rfast cfast vanish}, we rewrite the first identity of \eqref{eq: R and C endemic} as
\begin{align*}
    R^*\left[\mu+\bigl(1+\eps g(\EE)\bigr)\delta\right]
    = \alpha I^* + (1-\sigma)\beta C^* I^*.
\end{align*}
The right-hand side is bounded from above by $\alpha+\beta$,
while the left-hand side is bounded from below by $\eps\, g_{\min}\delta  R^*$
using \eqref{eq: gmin bounded}. 
Hence
\begin{align*}
    R^*\leq\frac{\alpha+\beta}{\delta\,g_{\min}}\eps^{-1}.
\end{align*}
Similarly, the second identity of \eqref{eq: R and C endemic} gives
\begin{align*}
    C^*=\frac{\delta\bigl(1+\eps g(\EE)\bigr)R^*}
             {\beta I^*+\mu+\bigl(1+\eps g(\EE)\bigr)\gamma}
      \leq\frac{\delta\bigl(1+\eps g(\EE)\bigr)R^*}
             {\bigl(1+\eps g(\EE)\bigr)\gamma}
      =\frac{\delta}{\gamma}R^* .
\end{align*}
Strict positivity of $R^*$ and $C^*$ is given by Proposition
\ref{prop:interior-equilibrium}.
\end{proof}

\begin{rem}[Scaled variables] 
\label{rem: scaling notation}
Motivated by Lemma \ref{lemma:uniform scaling}, we introduce the notation
$(\rfast,\cfast)\coloneqq(\eps R, \eps C)$ and
$\xfast \coloneqq (S,I,\rfast,\cfast)$, where $x=(S,I,R,C)$, noting 
that the mapping $x\mapsto\xfast$ is linear and one-to-one for $\eps>0$.
Since $g$ only depends on $S$ and $I$, we observe that $g(\xfast)=g(x)$.
Moreover, from Lemma \ref{lemma:uniform scaling}, given $\eps>0$ and an endemic equilibrium $\EE$,
$\EEfast$ lies in
the fixed compact set
\begin{align}
\label{eq: compact box}
    \mathcal{B}\coloneqq\mathcal{W}\times\left[0,\frac{\alpha+\beta}{\delta\,g_{\min}}\right]
    \times\left[0,\frac{\alpha+\beta}{{\gamma}\,g_{\min}}\right]\subset\real^4 .
\end{align}
\end{rem}

From equations
\eqref{eq:SIRCm R} and \eqref{eq:SIRCm C}, we get
\begin{subequations}
\label{eq:SIRCm fast-slow}
\begin{align}
    \dot S &= \mu(1-S) - \beta SI +(\epsinv+  g(\xfast))\gamma\cfast, \\
    \dot I &= \beta SI + \sigma\beta \epsinv \cfast I - (\mu+\alpha)I, \\
    \epsinv \dot \rfast  &= (1-\sigma)\beta \epsinv \cfast I + \alpha I - \epsinv \mu \rfast -
    (\epsinv+  g(\xfast))\delta \rfast, \\
    \epsinv \dot \cfast  &= (\epsinv +  g(\xfast))\delta \rfast - \beta \epsinv \cfast I - \mu \epsinv \cfast  - (\epsinv +  g(\xfast))\gamma \cfast.
\end{align}
\end{subequations}
Moreover, writing \eqref{eq:SIRCm fast-slow} as 
\begin{align*}
    \begin{pmatrix}
        \dot S , \dot I
    \end{pmatrix}^\top = \fslow(\xfast,\epsinv),\quad 
    \epsinv \begin{pmatrix}
        \dot \rfast , \dot \cfast
    \end{pmatrix}^\top = \ffast(\xfast,\epsinv),
\end{align*}
defining the fast time scale $\tau\coloneqq t\eps$, 
and denoting $(\cdot)'=\frac{d(\cdot)}{d\tau}$,
we can express \eqref{eq:SIRCm fast-slow}
as
\begin{align*}
    \begin{pmatrix}
         S' ,  I'
    \end{pmatrix}^\top = \epsinv \fslow(\xfast,\epsinv),\quad 
    \begin{pmatrix}
        \rfast' ,  \cfast'
    \end{pmatrix}^\top = \ffast(\xfast,\epsinv).
\end{align*}

As $\epsinv \to 0$, the fast variables $\bar{R}$ and $\bar{C}$ are algebraically constrained by the slow variables $S$ and $I$, defining the critical manifold 
\begin{align}
    \label{eq:enslaved variables}
    \mathcal{M}_\infty \coloneqq\left\{\xfast \in \real^4:\;
    \bar{R} = \frac{\alpha I}{\delta g(\xfast)}, \quad 
    \bar{C}
    =\frac{\alpha I}{\gamma g(\xfast)}
    \right\}.
\end{align}
Substituting the constraints defining $\critman$ into $\fslow(\xfast,0)$
yields the
\emph{reduced}, or \emph{slow}, subsystem
\begin{subequations}
\label{eq:SIRCm limit slow subsystem}
\begin{align}
    \dot S &= \mu(1-S) - \beta S I + \alpha I, \\
    \dot I &= I\left[\beta S -(\mu+\alpha)\right].
\end{align}
\end{subequations}
Note that \eqref{eq:SIRCm limit slow subsystem} does not depend on $g$: the
choice of feedback mechanism disappears in the limit $\eps\to\infty$.
We can further see that
$\frac{d}{dt}(S+I) = \mu\left(1-(S+I)\right)$,
so the set $\{S+I=1\}$ is invariant and globally attracting for
\eqref{eq:SIRCm limit slow subsystem} at rate $\mu$; on it,
\eqref{eq:SIRCm limit slow subsystem} reduces to the logistic equation
$\dot I = \beta I(1-I)-(\mu+\alpha)I$.

\begin{rem}[Recovery of the classical SIS model]
Equations \eqref{eq:SIRCm limit slow subsystem} are, in fact, the classical SIS model, where recovered individuals return directly to $S$ with no intermediate immune class.
\end{rem}

The unique endemic equilibrium of \eqref{eq:SIRCm limit slow subsystem} is
\begin{align}
\label{eq:SIRCm limit fast subsystem}
    S_\infty^*\coloneqq\frac{\mu+\alpha}{\beta}=\BRN^{-1}, \qquad
    I_\infty^* \coloneqq1-\BRN^{-1},
\end{align}
and the Jacobian of \eqref{eq:SIRCm limit slow subsystem} at $(S^*_\infty,\infEE[\infty])$ is 
\begin{align}
\label{eq: Jac slow}
    M\coloneqq\begin{pmatrix}
        -\mu-\beta \infEE[\infty] & \alpha-\beta S^*_\infty\\[2pt]
        \beta \infEE[\infty] & 0
    \end{pmatrix}
    =\begin{pmatrix}
        -\mu-\beta \infEE[\infty] & -\mu\\[2pt]
        \beta \infEE[\infty] & 0
    \end{pmatrix},
\end{align}
with eigenvalues
\begin{align}
\label{eq: eigenvals Jac slow}
 \left\{-\mu,\; -\beta \infEE[\infty]\right\}
    = \left\{-\mu,\; \mu+\alpha-\beta\right\}.
\end{align}
Thus the equilibrium \eqref{eq:SIRCm limit fast subsystem} of the reduced
subsystem is hyperbolic and asymptotically stable if and only if $\BRN>1$.

\subsubsection{Local dynamics}

In this section, we show that not only the SIS model is recovered in the limit $\eps\to\infty$,
but also that the dynamics of SIRCm approximate those of SIS smoothly in $\eps$.
Let $S^{-}\coloneqq\tfrac12\tfrac{\mu}{\mu+\beta}$,
$S^{+}\coloneqq\tfrac12\bigl(\BRN^{-1}+1\bigr)$,
$I^{-}\coloneqq\tfrac12\tfrac{\mu}{\mu+\alpha}\infEE[\infty]$ and
$I^{+}\coloneqq\tfrac12\bigl(\infEE[\infty]+1\bigr)$, so that,
from Lemma \ref{lemma:bounds EE}
we have $\mathcal{W}\subset\mathrm{int}(\mathcal{W}^+)$ (with $\mathcal{W}$
defined as in Lemma \ref{lemma:uniform scaling}),
where
\begin{align*}
   \mathcal{W}^+ \coloneqq \left[S^{-},S^{+}\right]\times\left[I^{-},I^{+}\right].
\end{align*}
Since $g$ depends only on $S$ and $I$, the critical manifold $\critman$ is a
graph over the $(S,I)$ variables, and we may cut out of it the compact, smooth
submanifold (with boundary)
\begin{align}
\label{eq: compact critical submanifold}
    \critsubman: = \left\{\xfast\in\critman : (S,I)\in\mathcal{W}^{+}\right\}.
\end{align}
By Lemma \ref{lemma:uniform scaling},
we have that $g\geq g_{\min}$ in $\critsubman$. 
Computing the Jacobian of $\ffast$ with
respect to $\rfast$ and $\cfast$ gives
\begin{align}
\label{eq:Jacobian ffast}
    \partial_{\rfast,\cfast} \ffast(\xfast,{\epsinv}) = \begin{pmatrix}
        -{\epsinv} \mu - \left({\epsinv} + g(\xfast)\right)\delta
          & (1-\sigma)\beta {\epsinv} I \\[4pt]
        \left({\epsinv} + g(\xfast)\right)\delta
          & -\beta {\epsinv} I - \mu {\epsinv} - \left({\epsinv} + g(\xfast)\right)\gamma
    \end{pmatrix},
\end{align}
so that, for $\xfast\in\critsubman$, the eigenvalues of
$\partial_{\rfast,\cfast}\ffast(\xfast,0)$ are
\begin{align}
\label{eq: layer eigenvalues}
    \left\{-g(\xfast)\delta,\ -g(\xfast)\gamma\right\}.
\end{align}
Both are bounded away from $0$ uniformly in $\critsubman$, so $\critsubman$ is
an attracting normally hyperbolic submanifold of $\critman$.

Thus, Fenichel's Theorem \citet[Thm.~3.1.4]{kuehn2015} guarantees the 
existence of a locally invariant \emph{slow manifold}
\begin{gather}
\label{def: Fenichel manifold}
    \critsubman[\eps]\coloneqq\left\{\xfast \in \real^4:\;
    \bar{R} = \frac{\alpha I}{\delta g(\xfast)} + \bigo(\epsinv), \quad 
    \bar{C}
    =\frac{\alpha I}{\gamma g(\xfast)}+\bigo(\epsinv), 
    \quad (S,I)\in\mathcal{W}^{+}
    \right\},
\end{gather}
which is
diffeomorphic to $\critsubman$, and at a Hausdorff
distance $\mathcal{O}(\epsinv)$ from it.
Moreover, the flow in $\critsubman[\eps]$ converges 
(in $\mathcal{C}^1$) to
the flow in $\critsubman$ as $\eps\to\infty$.
In particular the flow in $\critsubman[\eps]$ is given by
\begin{align}
\label{eq:reduced flow Fenichel}
    \dot I = I \left[\beta S -(\mu+\alpha)\right]= 
    \beta I\left[(\infEE[\infty]-I)- \epsinv\left(\rfast+(1-\sigma)\cfast\right)\right].
\end{align}

\begin{prop}
\label{prop: large epsilon unique endemic}
    Let $\BRN>1$ and let $g$ satisfy Definition \ref{dfn:funcg}.
    Then, there exists an ${{\eps_\infty}}>0$ such that, for $\eps>\eps_\infty$, SIRCm \eqref{eq:SIRCm} has 
    a unique endemic equilibrium $\EE$,
    which depends smoothly on $\epsinv$  and is locally asymptotically stable.
    Moreover, we have that $\lim_{\eps\to\infty}\xfast^*_\eps=\xfast^*_\infty$,
    where
    \begin{align*}
    \EEfast[\infty] \coloneqq \left(\BRN^{-1},\infEE[\infty], \frac{\alpha \infEE[\infty]}{\delta g(S^*_\infty,\infEE[\infty],0,0)},
    \frac{\alpha \infEE[\infty]}{\gamma g(S^*_\infty,\infEE[\infty],0,0)}\right).
    \end{align*}
\end{prop}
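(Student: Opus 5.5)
Following the strategy of Proposition \ref{prop: small epsilon}, we work in the scaled variables $\xfast=(S,I,\rfast,\cfast)$ of Remark \ref{rem: scaling notation}. The parameter becomes $\nu\coloneqq\epsinv\in[0,\nu_0)$, and the equilibrium equations become $G(\xfast,\nu)=0$, where $G$ is the right-hand side of \eqref{eq:SIRCm fast-slow} with the last two components multiplied by $\eps$. These components are exactly $\ffast(\xfast,\nu)$, so $G=(\fslow,\ffast)$. $G$ is smooth in $(\xfast,\nu)$ up to and including $\nu=0$, since $\nu$ enters polynomially.

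\emph{Step 1: the limiting equilibrium.} At $\nu=0$, the system $G=0$ reads $\ffast(\xfast,0)=0$, which defines $\critman$, together with the slow equations \eqref{eq:SIRCm limit slow subsystem}. Its unique solution with $I>0$ is $\EEfast[\infty]$. Here $g(\EEfast[\infty])>0$ because $S^*_\infty I^*_\infty>0$ (Definition \ref{dfn:funcg}).

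\emph{Step 2: nondegeneracy and the IFT.} Write $\partial_{\xfast}G(\EEfast[\infty],0)$ in block form. The fast block $A=\partial_{\rfast,\cfast}\ffast$ is lower-triangular with eigenvalues $-g\delta,-g\gamma$ by \eqref{eq: layer eigenvalues}. The slow block $\partial_{\rfast,\cfast}\fslow(\cdot,0)$ equals $(g\gamma\,e_1^\top \text{-row})$, which is not zero. Take the Schur complement of $A$: by the standard reduction, it equals the Jacobian of the reduced flow on $\critman$, i.e.\ the matrix $M$ of \eqref{eq: Jac slow}. One checks this directly by substituting the graph \eqref{eq:enslaved variables} into $\fslow$. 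Hence $\det\partial_{\xfast}G=\det A\cdot\det M\neq0$, using \eqref{eq: eigenvals Jac slow}. The IFT then gives a smooth branch $\nu\mapsto\EEfast$ near $\EEfast[\infty]$ with limit $\EEfast[\infty]$.

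\emph{Step 3: global uniqueness.} Uniqueness follows by a compactness and contradiction argument, mirroring Proposition \ref{prop: small epsilon}. Suppose $\eps_n\to\infty$ and there are endemic equilibria $\EE[\eps_n]$ off the branch. By Remark \ref{rem: scaling notation}, their scaled versions lie in the compact box $\mathcal{B}$, so a subsequence converges to some $\bar y\in\mathcal{B}$ with $G(\bar y,0)=0$. Lemma \ref{lemma:bounds EE} gives $I\ge\frac{\mu}{\mu+\alpha}I^*_\infty>0$ and $S>0$ on $\mathcal{B}$, so $g(\bar y)\ge g_{\min}>0$ and $\bar y$ is endemic. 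Step 1 then forces $\bar y=\EEfast[\infty]$, which contradicts the local uniqueness from the IFT.

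\emph{Step 4: stability.} Step 4 is the main obstacle: the Jacobian of the original system in $x$ is singular as $\eps\to\infty$, so continuity of eigenvalues cannot be applied directly. I would work in the scaled coordinates, where the linearization is $\mathrm{diag}(1,1,\eps,\eps)\,\partial_{\xfast}G(\EEfast,\nu)$. This is a singularly perturbed matrix. Its spectrum splits into two groups:
\begin{itemize}
\item $\eps$ times eigenvalues close to those of $A$, namely $\approx -\eps g\delta$ and $-\eps g\gamma$;
\item eigenvalues close to those of the Schur complement $M$, namely $-\mu$ and $\mu+\alpha-\beta$, with an $\bigo(\nu)$ error.
\end{itemize}
To make the split rigorous, I would use a block-diagonalization (Riccati/Chang transformation), or equivalently cite Fenichel: hyperbolicity of the reduced equilibrium on $\critsubman[\eps]$ together with normal attraction. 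Alternatively, one can write the characteristic polynomial in terms of $\lambda$ and $\nu\lambda$ and apply Rouché separately to both scales. All four eigenvalues then have negative real part for $\eps>\eps_\infty$, which gives local asymptotic stability. Since $\EEfast$ is an equilibrium of the original system under a linear change of variables, the same conclusion holds for $\EE$.
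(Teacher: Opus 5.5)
Your proposal is correct and follows essentially the same route as the paper. Both arguments use the scaled variables with $H=(\fslow,\ffast)$ and show $\partial_{\xfast}H(\EEfast[\infty],0)$ is nondegenerate because its Schur complement is $M$; you justify that identity by differentiating the graph of $\critman$, where the paper computes the blocks explicitly. Both then take the IFT branch, prove global uniqueness by compactness and contradiction in $\mathcal{B}$, and obtain stability from the slow/fast eigenvalue splitting. The paper gets that splitting by citing \citet[Ch.~2, Thm.~3.1]{kokotovic1999singular}, whose proof is exactly the Chang block-diagonalization you propose. The one detail you leave implicit is that the branch point lies in $\Om$ with positive coordinates for large $\eps$, which follows at once from $R=\epsinv\rfast$, $C=\epsinv\cfast$ and conservation at equilibria, or from Theorem~\ref{prop:endemic-equilibrium} combined with your uniqueness step.
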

\begin{proof}
\emph{Local uniqueness.}
Given $\xfast\in\real^4$ and $\eps>0$, 
we define $H(\xfast,\epsinv)\coloneqq\begin{pmatrix}
    \fslow(\xfast,\epsinv)\\
    \ffast(\xfast,\epsinv)
\end{pmatrix}$
and
\begin{align*}
A_\eps(\xfast)\coloneqq\partial_{S,I}\fslow,\quad
B_\eps(\xfast)\coloneqq\partial_{\rfast,\cfast}\fslow,\quad
D_\eps(\xfast)\coloneqq\partial_{S,I}\ffast,\quad
E_\eps(\xfast)\coloneqq\partial_{\rfast,\cfast}\ffast,
\end{align*}
with every term evaluated at $(\xfast,\epsinv)$.
It is straightforward to verify that $H(\EEfast[\infty],0)=0$.
To show that $\partial_{\xfast} H(\EEfast[\infty],0)$
is locally invertible, we compute its determinant  using the Schur complement
\begin{align*}
     \det \partial_{\xfast} H(\EEfast[\infty],0) = 
   \left[\det \left(E_\infty\right)\, \cdot\, 
    \det \left(A_\infty - B_\infty E_\infty^{-1} D_\infty \right)\right](\EEfast[\infty]),
\end{align*}
where $A_\infty$, $B_\infty$, $D_\infty$, $E_\infty$
 denote the corresponding limits as $\varepsilon\to\infty$.
From \eqref{eq:Jacobian ffast} and \eqref{eq: Jac slow} we have 
\begin{subequations}
\label{eq:limit Jacobians}
\begin{gather}
    E_\infty^{-1} (\EEfast[\infty])=
    \begin{pmatrix}
        -\frac{1}{g(\EEfast[\infty])\delta} & 0\\
        -\frac{1}{g(\EEfast[\infty])\gamma} & -\frac{1}{g(\EEfast[\infty])\gamma}
    \end{pmatrix},
    \\
    B_\infty E_\infty^{-1}  (\EEfast[\infty])=
    \begin{pmatrix}
        -1 & -1\\
        0 & 0
    \end{pmatrix},
    \\
    B_\infty E_\infty^{-1} D_\infty(\EEfast[\infty])=
    \begin{pmatrix}
        \frac{\alpha \infEE[\infty]}{g(\EEfast[\infty])}\partial_S g(\EEfast[\infty]) & -\alpha +\frac{\alpha \infEE[\infty]}{g(\EEfast[\infty])}\partial_I g(\EEfast[\infty])\\
        0 & 0
    \end{pmatrix},
    \\
    \left[ A_\infty - B_\infty E_\infty^{-1} D_\infty \right](\EEfast[\infty])=
    \begin{pmatrix}
        -\mu-\beta \infEE[\infty] & \alpha -\beta S^*_{\infty}\\
        \beta \infEE[\infty] & 0
    \end{pmatrix}=M,
\end{gather}
\end{subequations}
and therefore,
\begin{align*}
    \det \partial_{\xfast} H(\EEfast[\infty],0) = g(\EEfast[\infty])^2\delta\gamma \mu \beta \infEE[\infty]>0.
\end{align*}
By the IFT, there exists $\hat{\eps}>0$, a neighborhood of $\EEfast [\infty]$, $U_\infty\subset\real^4$,
and a unique smooth mapping $\bar \phi^*:[0, \hat{\eps}^{-1}] \to U_\infty$ 
such that $\bar\phi^*(0)=\EEfast[\infty]$ and $H(\bar \phi^*(\epsinv),\epsinv)=0$
for $\eps>\hat{\eps}$. 
Taking $\hat{\eps}$ larger if necessary,
$\phi^*(\epsinv)\in \Om$ with positive coordinates
for $\eps>\hat{\eps}$, so it is an endemic equilibrium. 
Recalling the notation from Remark \ref{rem: scaling notation}, the map $\phi^*$ is induced by $\bar{\phi}^*$ via the inverse 
scaling $\phi^*(\kappa) \coloneqq \operatorname{diag}(1, 1, \kappa, \kappa) \bar{\phi}^*(\kappa)$ for $\kappa \ge 0$.

\emph{Global uniqueness.} 
For the sake of contradiction, let us assume that there is no $\eps_\infty>0$ such that
$\phi^*(\epsinv)$ is
a unique endemic equilibrium in $\Om$ for $\eps>\eps_\infty$.
Then, there is a sequence $\EE[\eps_n]\in \Om$ 
such that
$\eps_n\to \infty$ as $n\to\infty$ and
$\EEfast[\eps_n]\neq\bar\phi^*(\eps_n^{-1})$ for all
$n$ large enough.
By Lemma \ref{lemma:uniform scaling} and Remark \ref{rem: scaling notation}, $\EEfast[\eps_n]\in \mathcal{B}$,
which is compact.
Using Bolzano-Weierstrass, there is a subsequence (without relabeling for simplicity) converging to some $\EEfast[]\in \mathcal{B}$.
Since $H$ is continuous on
$\mathcal{B}\times[0,\hat{\eps}^{-1})$,
\begin{align*}
        H(\xfast^{\,*},0)=\lim_{n\to\infty}H(\EEfast[\eps_n],\epsinv_n)=0.
\end{align*}
As a consequence, $\EEfast[]\in \critsubman$ and $\infEE[]=1-\BRN^{-1}$, $S^*=\BRN^{-1}$,
so $\EEfast[]=\EEfast[\infty]$.
Hence, for $n$ large enough, $\EEfast[\eps_n]\in U_\infty$ and $\eps_n>\hat{\eps}$. 
By the local uniqueness established above, we get a contradiction.

\emph{Local asymptotic stability.}
For each $\eps>0$,
the scaling $\xfast=(S,I,\eps R,\eps C)$ is a linear change of
variables, so the Jacobian of \eqref{eq:SIRCm} at $\EE$ is similar to
the Jacobian of the scaled system
\begin{align}
\label{eq: fast-slow writing}
\begin{pmatrix} \dot S , \dot I \end{pmatrix}^\top
   = \fslow(\xfast,\epsinv),
\qquad
\begin{pmatrix} \dot{\rfast} , \dot{\cfast} \end{pmatrix}^\top
   = \eps\,\ffast(\xfast,\epsinv),
\end{align}
evaluated at $\EEfast=\bar\phi^*(\epsinv)$, i.e., they share the same eigenvalues.
Then it suffices to show that, for $\eps$ large enough, the eigenvalues of
\begin{align*}
J_\eps(\bar\phi^*(\epsinv))=\begin{pmatrix} A_\eps & B_\eps \\ \eps D_\eps & \eps E_\eps
\end{pmatrix}(\bar\phi^*(\epsinv))
\end{align*}
have negative real parts.
Since $\bar\phi^*$ is a smooth function of $\epsinv$, the Jacobian blocks depend continuously
 on $\epsinv$ and approach their limiting values $A_\infty$, $B_\infty$, $D_\infty$, $E_\infty$
 displayed in \eqref{eq:limit Jacobians} as $\varepsilon\to\infty$.
 In particular, we know (see \eqref{eq: layer eigenvalues}) that the eigenvalues of $E_\infty$ are given by
 $-g_\infty \delta$ and $-g_\infty \gamma$, where $g_\infty \coloneqq g(\EEfast[\infty])\geq g_{\min}>0$.
 
One can recognize that 
\eqref{eq: fast-slow writing} linearized around 
$\bar\phi^*(\epsinv)$
is the singularly perturbed linear system \citet[Ch.~2, Eq. (2.1)--(2.2)]{kokotovic1999singular}
with small parameter $\epsinv$.
Moreover, we have that the blocks $A_\eps,B_\eps,D_\eps,E_\eps$
are continuously differentiable with respect to $\epsinv$ as $\epsinv\to0$, 
$E_\infty$ is non-singular with eigenvalues $-g_\infty\delta$ and $-g_\infty\gamma$,
and $A_\infty-B_\infty E_\infty^{-1}D_\infty=M$ has eigenvalues $-\mu$ and
$-\beta\infEE[\infty]$. 
Therefore, the conditions of \citet[Ch.~2, Thm.~3.1]{kokotovic1999singular} are satisfied
and the eigenvalues of $J_\eps(\bar\phi^*(\epsinv))$ are given by
\[
  \{-\mu+\mathcal{O}(\epsinv),\,
-\beta\infEE[\infty]+\mathcal{O}(\epsinv),\,
-\eps g_\infty\delta+\mathcal{O}(1),\,
-\eps g_\infty\gamma+\mathcal{O}(1)\}.
\]
Then,  we can enlarge $\eps_\infty$ so that every eigenvalue of the Jacobian of \eqref{eq:SIRCm} at $\EE$ has negative
real part for $\eps>\eps_\infty$; hence $\EE$ is locally asymptotically stable.
\end{proof}

The following statement follows immediately from Lemma \ref{lemma:bounds EE} and Proposition \ref{prop: large epsilon unique endemic}.
\begin{cor}
    $\infEE[\infty]$ is a supremum for $\infEE$.
\end{cor}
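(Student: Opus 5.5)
The corollary asserts that $\infEE[\infty]=1-\BRN^{-1}$ is the least upper bound of the endemic prevalence over all feedback strengths $\eps\ge0$. I read "supremum" this way, with the supremum taken over every endemic equilibrium $\EE$ and every $\eps\geq0$, and with $g$ satisfying Definition~\ref{dfn:funcg}. The proof has two parts: an upper bound and approximation from below.

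\emph{Upper bound.} Lemma~\ref{lemma:bounds EE} already gives $\infEE\le\infEE[\infty]$ for every endemic equilibrium and every $\eps\ge0$. If strictness is wanted, I would use identity \eqref{eq: relation ISC endemic}, namely $I^*=\infEE[\infty]-R^*-(1-\sigma)C^*$. Proposition~\ref{prop:interior-equilibrium} gives $R^*>0$ and $C^*\ge0$, so $I^*<\infEE[\infty]$. The bound is therefore never attained, and it is genuinely a supremum rather than a maximum.

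\emph{Approximation from below.} Proposition~\ref{prop: large epsilon unique endemic} supplies, for $\eps>\eps_\infty$, a unique endemic equilibrium with $\EEfast\to\EEfast[\infty]$ as $\eps\to\infty$. Its second coordinate is therefore $\infEE\to\infEE[\infty]$. Hence, for any $\eta>0$, there is some $\eps$ with $\infEE>\infEE[\infty]-\eta$.

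As an independent check of the convergence, Lemma~\ref{lemma:uniform scaling} gives $R^*,C^*=\bigo(\eps^{-1})$. Plugging this into \eqref{eq: relation ISC endemic} yields $\infEE[\infty]-I^*=\bigo(\eps^{-1})$ directly, without needing the implicit function argument. Together with the upper bound, this shows the supremum equals $\infEE[\infty]$.

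There is no real obstacle here. The only point needing care is to state precisely the set over which the supremum is taken, and to note that the limit statement requires $\BRN>1$ so that endemic equilibria exist (Theorem~\ref{prop:endemic-equilibrium}).
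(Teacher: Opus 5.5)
Your proposal is correct and follows the paper's own one-line argument: the upper bound comes from Lemma~\ref{lemma:bounds EE}, and the approach from below comes from the limit $\EEfast\to\EEfast[\infty]$ in Proposition~\ref{prop: large epsilon unique endemic}. Your two additions are also valid and slightly sharpen the statement. Strictness $\infEE<\infEE[\infty]$ follows from \eqref{eq: relation ISC endemic} together with $R^*>0$, so the supremum is never attained. Lemma~\ref{lemma:uniform scaling} gives $0<\infEE[\infty]-\infEE=\bigo(\eps^{-1})$ for \emph{every} endemic equilibrium, which bypasses the implicit-function and uniqueness machinery entirely.
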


\subsubsection{Expansion of $\infEE$ for large $\eps$}\label{sec:high_prev_series}
By Proposition \ref{prop: large epsilon unique endemic},
it is possible to write the series expansion
\begin{align*}
\infEE= \infEE[\infty]+q(\eps^{-1})=\infEE[\infty]+\sum_{k\in\naturals^+} q_k\, \eps^{-k},
\end{align*}
for $\eps$ large enough.
We observe that, for every $\eps$, the endemic equilibrium satisfies
$
\eps^{-2}\poly_0(I)+\eps^{-1} \poly_1(I) +\poly_2(I) =0,
$
where $\poly_2(I)$ is a cubic polynomial in $I$ whose only non-zero root is $I_\infty^*\in(0,1)$.
Then, we can proceed analogously to Section \ref{sec:low_prev_series} 
to compute the coefficients $q_k$.
Approximations of up to order 4 in $\epsinv$ are depicted
in the right panels of Figures \ref{fig:eps1_eps2_prevalence} and 
\ref{fig:eps1_eps2_prevalence_I}.

\section{Seasonally forced dynamics}
\label{sec:seasonally_forced}

In the case of influenza, infections follow a strong seasonal variation.  In \cite{casagrandi2006}, the SIRC model was extensively studied in the presence of seasonal forcing.  As we have seen, seasonal forcing is not generally needed in order for the SIRCm model to produce long-term cyclically varying solutions.  Nevertheless, here we briefly present some examples of solutions obtained with the SIRCm model in the presence of seasonal forcing.

We restrict 
our attention to the prevalence-driven model with a single feedback parameter ($\varepsilon\coloneqq\varepsilon_1 = \varepsilon_2$),  since (like the SIRC model) this model's unforced endemic equilibrium remains stable for all $\epstilde$ tested (see Section~\ref{sec: stability and bifurcation}).  This lets us isolate the effect of mutation feedback on the seasonally forced dynamics.
For the forcing, we replace the constant transmission rate $\beta_0$ with $\beta(t) = \beta_0\left(1 + \eta\cos(2\pi t)\right)$,
as is done in \cite{casagrandi2006}.

\begin{figure}
    \centering \includegraphics[width=1\textwidth]{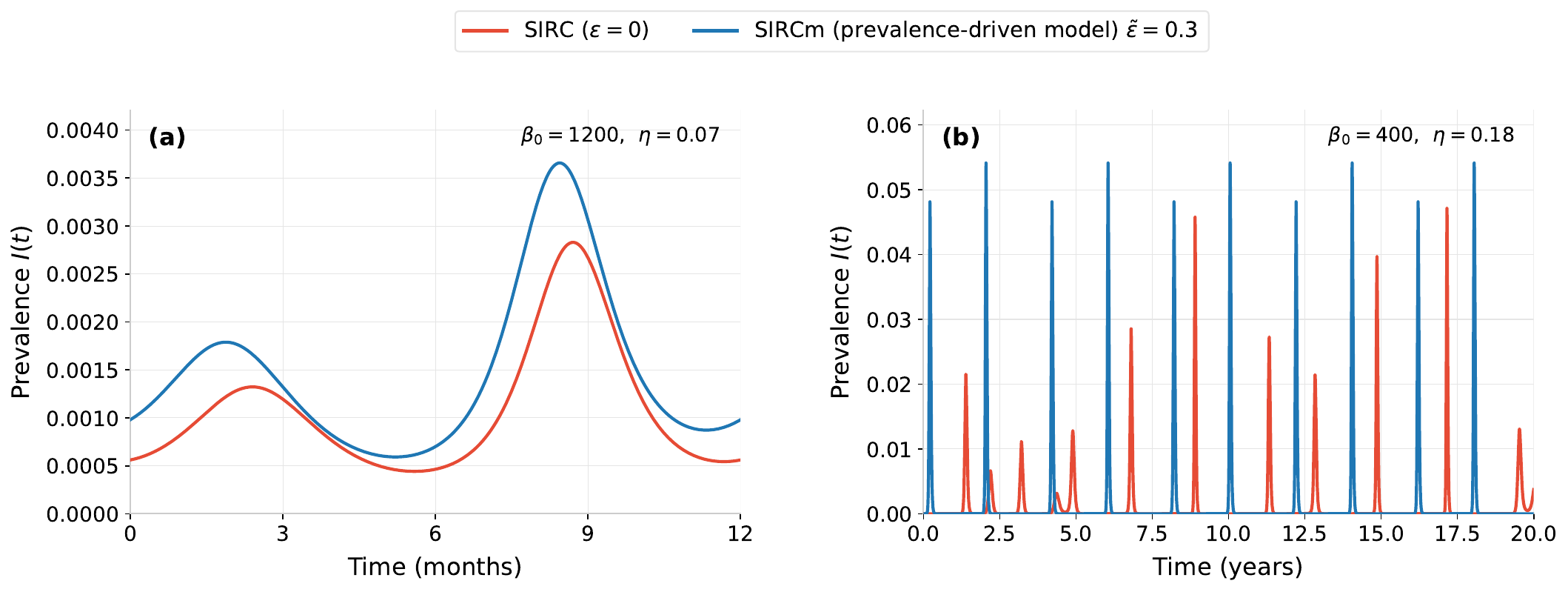}
    \caption{(see Section \ref{sec:seasonally_forced}) Replication and extension of Figure 5 in \cite{casagrandi2006} for the prevalence-driven model of the SIRCm model ($\tilde{\varepsilon}=0.3$) compared to the standard SIRC baseline ($\tilde{\varepsilon}=0$). The parameter $\eta$ represents the degree of seasonality
    }
    \label{fig:seasonallyforced}
\end{figure}

Figure~\ref{fig:seasonallyforced} shows a comparison of the SIRCm prevalence-driven model ($\tilde\varepsilon = 0.3$) with the SIRC model ($\tilde\varepsilon = 0$) with parameters corresponding to \citet[Figure 5]{casagrandi2006}: panel (a) uses parameters representative of a tropical regime with low seasonality and high baseline transmission, while panel (b) uses those representative of a temperate regime with stronger seasonal forcing and lower baseline transmission.

In the tropical regime (a), both models settle into a stable annual cycle, with SIRCm attaining a higher peak prevalence than SIRC, consistent with the elevated endemic prevalence documented in Section~\ref{sec: stability and bifurcation}. In the temperate regime (b), while SIRC settles into the chaotic attractor reported by Casagrandi et al.\ for this parameter setting, SIRCm  converges to a period-2 cycle. These two regimes illustrate that mutation feedback can alter not only the level but also the qualitative character of the seasonally forced dynamics; further comparisons with different values of $\epstilde$ are given in Appendix~\ref{app:seasonal}.

\section{Discussion and biological implications}
\label{sec:discussion}

The SIRCm model provides a simple, implicit way to incorporate the effect of viral immune escape into a compartmental epidemiological model.  It smoothly bridges between different regimes; when mutation feedback is weak or non-existent, its behavior is similar to that of the SIRC model
\citep{casagrandi2006}, in which recovered hosts pass through a period of partial immunity (cross-immunity) before
eventually becoming fully susceptible again.
In this regime, the endemic equilibrium prevalence rate is typically quite low.
On the other hand, with strong enough mutation feedback, the endemic prevalence becomes high (more than 50\% of the population is typically infected), as the rate of immune escape and high prevalence together 
become self-sustaining: greater incidence leads to more rapid immune escape, which leads to greater incidence.
In the limit of strong feedback, 
the model reduces to the classical SIS
model of a disease that confers no immunity at all.

The transition between regimes is not gradual. Figures \ref{fig:eps1_eps2_prevalence} and \ref{fig:eps1_eps2_prevalence_I} show that the long-run fraction of the population that is infected at any given time, $I^*$, stays close to its SIRC value across a wide range of weak-to-moderate mutation feedback, and rises sharply for feedback strength $\epstilde$ above a certain threshold, beyond which $I^*$ climbs quickly toward its theoretical ceiling. 

Given this behavior, it is important to consider the factors that influence the mutation feedback strength, in order to understand what could drive a transition to high prevalence.  Those factors include not only the rate of reproduction of the pathogen and the probability of a beneficial mutation, but also the overall size of the host population -- with enough hosts, immune escape can potentially proceed at a constantly high rate.  Of course, the model's idealization of immune escape as a linear function of incidence or prevalence is too simplistic, and it is possible that some sort of saturation occurs in very large host populations.

Besides this transition between low- and high-prevalence endemic regimes,
the results of Section \ref{sec: stability and bifurcation} show that mutation feedback changes the qualitative behavior of epidemics, producing (in the 
transmission-driven
model) sustained oscillations.
Interestingly, the instability that leads to these
oscillations occurs only in an intermediate mutation feedback regime.
This is notable because nothing in the model changes with time; there is no built-in seasonality and no external driver of any kind. The oscillation is generated entirely by the feedback loop between infection and mutation. This offers a  different explanation for recurring epidemic waves than the usual one \citep{Martinez2018}, which attributes them to external seasonal changes in transmission (e.g., people spending more time indoors in the winter).  From this viewpoint, seasonal behavioral patterns may serve primarily to reinforce the timing of oscillations, rather than as their primary driver.

\section*{Statements and Declarations}

\noindent \textbf{Artificial intelligence usage.} 
The use of AI language models (Anthropic's Claude, Google's Gemini, OpenAI's ChatGPT) is acknowledged. All content, proofs and figures were produced and verified by the authors, who take full responsibility for the final work. 

\noindent \textbf{Competing Interests.} 
The authors have no competing interests that are relevant to the content of this article.

\noindent \textbf{Code availability.} 
The Python, Julia and \textit{Mathematica} code used to generate the numerical results and figures in this study is available at \url{https://github.com/danielarenee/infection-modeling-with-mutation}.

\section*{Acknowledgements} 
This work was supported by funding from King Abdullah University of Science and Technology (KAUST), including support through the Visiting Student Research Program (VSRP). 

\begin{appendices}
\counterwithout{figure}{section}
\setcounter{figure}{15}

\section{The equilibrium polynomial for the transmission-driven model}
\label{app:quartic}

As stated in Section \ref{sec: persistence theory}, eliminating $S,R,C$ from the equilibrium equations of SIRCm \eqref{eq:SIRCm} yields the following polynomial in $I$ 

\begin{align*}
    \poly(\infEE[],\eps_1,\eps_2)= c_4 {\infEE[]}^4 + c_3 {\infEE[]}^3+ c_2 {\infEE[]}^2 +c_1 {\infEE[]} +c_0 = 0, 
\end{align*}
where
\begin{align*}
    c_4 &= -\beta^{2} \gamma \delta \mu \eps_1 \eps_2 (-\alpha^{2} \gamma \eps_2
            +\alpha (\beta^{2} \sigma 
            +\gamma \delta \sigma \eps_1-\gamma \eps_2 (\delta \sigma +2 \mu ))\\
        &\qquad +\beta \sigma (\beta (\delta  \sigma +\mu )+\delta \mu \sigma \eps_1)
            +\gamma \mu (\delta  \sigma \eps_1-\eps_2 (\delta \sigma +\mu ))),\\
    c_3 &= \beta \mu (\alpha^{3} \gamma^{2} \delta  \eps_1 \eps_2^2 
            +\alpha^{2} \gamma \eps_2 (\beta^{2}(\gamma \eps_2 +\delta (\eps_1-2 \sigma \eps_1))-\beta \gamma \delta \eps_1 \eps_2
            +\gamma \delta \eps_1 (4 \mu \eps_2-\delta (\sigma +1)(\eps_1-\eps_2)))\\
        &\qquad +\alpha (\beta^{4} (\sigma -1) (\delta \sigma \eps_1-\gamma \eps_2)
            +\beta^{3} \gamma \delta \sigma \eps_1 \eps_2 +\beta^{2} \gamma (\gamma \eps_2 (\delta  (-2 \sigma  \eps_1+\eps_1+\sigma \eps_2+\eps_2)+2 \mu  \eps_2)\\
        &\qquad +\delta \eps_1 (\delta (\sigma -1) \sigma \eps_1-2 \eps_2 (\delta \sigma^{2}
            +3 \mu \sigma -\mu )))+\beta  \gamma \delta \eps_1 \eps_2 (2 \gamma  \delta \sigma \eps_1-\gamma \eps_2 (2 \delta \sigma 
            +\mu (\sigma +2))\\
        &\qquad -\delta \mu (\sigma +1) \sigma  \eps_1)+\gamma^{2} \delta \mu \eps_1 \eps_2 (5 \mu \eps_2-2 \delta (\sigma +1)(\eps_1-\eps_2)))
            +\beta^{4} (\sigma -1) (\delta \sigma +\mu) (\delta \sigma \eps_1-\gamma \eps_2)
            \\
        &\qquad +\beta^{3} \delta  \sigma  \eps_1 (\gamma  \eps_2 (2 \delta  \sigma -\mu  (\sigma -3)) +\delta \mu (\sigma -1)\sigma \eps_1)
            +\beta^{2} \gamma (\gamma \eps_2 (\delta^{2} \sigma (\eps_2
            -\eps_1)\\
        &\qquad +\delta \mu (-2 \sigma \eps_1+\eps_1+\sigma \eps_2+\eps_2)
            +\mu^{2} \eps_2) +\delta \mu \eps_1 (\delta(\sigma -1)\sigma \eps_1 +\eps_2 (\delta \sigma^{2}(2 \eps_1-3)-4 \mu \sigma 
            +\mu )))\\
        &\qquad +\beta \gamma \delta \mu \eps_1 \eps_2 
            (2 \gamma \delta \sigma \eps_1-\gamma \eps_2(2 \delta \sigma 
            +\mu \sigma +\mu ) -\delta \mu (\sigma +1) \sigma \eps_1)\\
         &\qquad +\gamma^{2}\delta \mu^{2}\eps_1 \eps_2 (2 \mu  \eps_2-\delta (\sigma +1) (\eps_1-\eps_2))), \\
    c_2 &= \mu  (\alpha^{3} \gamma^{2} \delta  \eps_1 \eps_2 (\delta  (\eps_2-\eps_1)+\mu  \eps_2)-\alpha^{2} \gamma  (\beta^{2} (\delta  \eps_2 (\gamma  \sigma  \eps_1-2 \gamma  \eps_2+4 \mu  \sigma  \eps_1-2 \mu  \eps_1)-2 \gamma  \mu  \eps_2^2\\
    &\qquad  +\delta^{2} \eps_1 (-\sigma  \eps_1+\eps_1+2 \sigma  \eps_2-\eps_2))+\beta  \delta  \eps_1 \eps_2 (\delta  \mu  \sigma  \eps_1-\gamma  (\sigma +1) (\delta  \eps_1-\delta  \eps_2-\mu  \eps_2))\\
    &\qquad  +3 \gamma  \delta  \mu  \eps_1 \eps_2 (\delta  (\eps_1-\eps_2)-\mu  \eps_2))+\alpha  (\beta^{4} (\sigma -1) (-\gamma^{2} \eps_2+\gamma  \delta  (\sigma  \eps_1+\eps_1-2 \eps_2)-3 \gamma  \mu  \eps_2\\
    &\qquad  +\delta  \sigma  \eps_1 (\delta +3 \mu )) +\beta^{3} (\gamma^{2} (-\eps_2) (\delta  (-2 \sigma  \eps_1+\eps_1+\sigma  \eps_2+\eps_2)+\mu  (\sigma +1) \eps_2)+\gamma  \delta  \eps_1 (-\delta  (\sigma -1) \sigma  \eps_1\\
    &\qquad +2 \delta  \sigma^{2} \eps_2 +\mu  (\sigma^{2}+3 \sigma -1) \eps_2)+\delta^{2} \mu  (\sigma -1) \sigma  \eps_1^2)+\beta^{2} \gamma  (\gamma  (-(\delta^{2} (\eps_1-\eps_2) (\sigma  \eps_1 (\eps_2-1)+\eps_1\\
    &\qquad +(\sigma +1) \eps_2)) +\delta  \mu  \eps_2 (\sigma  \eps_1 (\eps_2-4)+\eps_1+(\sigma +5) \eps_2)+4 \mu^{2} \eps_2^2)+\delta  \mu  \eps_1 (\delta  \eps_1 (\sigma^{2} (\eps_2+1)+\sigma  (\eps_2+1)\\
    &\qquad -2) -2 \delta  (\sigma^{2}+2 \sigma -1) \eps_2+\mu  (4-9 \sigma ) \eps_2))+2 \beta  \gamma  \delta  \mu  \eps_1 \eps_2 (\gamma  (\sigma +1) (\delta  (\eps_1-\eps_2)-\mu  \eps_2)-\delta  \mu  \sigma  \eps_1)\\
    &\qquad  +3 \gamma^{2} \delta  \mu^{2} \eps_1 \eps_2 (\delta  (\eps_2-\eps_1)+\mu  \eps_2))+\beta^{5} (\sigma -1) (\delta  \sigma +\mu ) (\gamma  \eps_2-\delta  \sigma  \eps_1)-\beta^{4} (\gamma^{2} (\sigma -1) \eps_2 (\delta +\mu )\\
    &\qquad +\gamma  (\delta^{2} \sigma  (\eps_1 (\sigma  (\eps_2-2)+2)+(\sigma -1) \eps_2)+\delta  \mu  (2 (\sigma^{2}-1) \eps_2-\eps_1 (\sigma^{2}+(\sigma -2) \sigma  \eps_2-1))\\
    &\qquad +3 \mu^{2} (\sigma -1) \eps_2) +\delta  \mu  (\sigma -1) \sigma  \eps_1 (\delta  (\sigma  (\eps_1-2)-1)-3 \mu ))+\beta^{3} (\gamma^{2} (-\eps_2) (2 \delta^{2} \sigma  (\eps_2-\eps_1)\\
    &\qquad  +\delta  \mu  (-3 \sigma  \eps_1+\eps_1+3 \sigma  \eps_2+\eps_2) +\mu^{2} (\sigma +1) \eps_2)-\gamma  \delta  \mu  \eps_1 \eps_2 (\delta  \sigma^{2} (\eps_1-4)-5 \mu  \sigma +\mu )\\
    &\qquad  +\delta^{2} \mu^{2} \sigma  (\sigma^{2}-1) \eps_1^2) +\beta^{2} \gamma  \mu  (\gamma  (-(\delta^{2} (\eps_1-\eps_2) (\sigma  \eps_1 (\eps_2-1)+\eps_1+(\sigma +1) \eps_2))+\delta  \mu  \eps_2 (\sigma  \eps_1 (\eps_2-3)\\
    &\qquad +\eps_1+(\sigma +3) \eps_2) +2 \mu^{2} \eps_2^2)+\delta  \mu  \eps_1 (\delta  ((\sigma +1) \eps_1 (\sigma +\sigma  \eps_2-1)-2 \sigma^{2} \eps_2-2 \sigma  \eps_2+\eps_2)\\
    &\qquad +\mu  (2-5 \sigma ) \eps_2)) +\beta  \gamma  \delta  \mu^{2} \eps_1 \eps_2 (\gamma  (\sigma +1) (\delta  (\eps_1-\eps_2)-\mu  \eps_2)-\delta  \mu  \sigma  \eps_1)+\gamma^{2} \delta  \mu^{3} \eps_1 \eps_2 (\delta  (\eps_2-\eps_1)+\mu  \eps_2)), \\
    c_1 &= \beta \mu (\alpha^{2} \gamma (\gamma (\delta (\eps_1-\eps_2)-\mu \eps_2) 
            (\mu (-\eps_2)-\delta (-\sigma \eps_1+\eps_1+\eps_2))+\delta \mu \eps_1 (\delta ((\sigma -1) \eps_1-2 \sigma \eps_2+\eps_2)\\
        &\qquad +\mu (1-2 \sigma) \eps_2))+\alpha (\beta^{2} (\sigma -1) (\gamma^{2} (\delta
            (\eps_1-2 \eps_2) -2 \mu\eps_2) +\gamma (\delta^{2} (\sigma \eps_1+\eps_1-\eps_2)\\
        &\qquad +2 \delta \mu (\sigma  \eps_1+\eps_1-2 \eps_2)-3 \mu^{2} \eps_2) +\delta \mu \sigma \eps_1 (2 \delta +3 \mu ))+\beta 
            (-(\gamma^{2} (\delta (\eps_1-\eps_2)-\mu \eps_2)(\delta (\sigma -1) \eps_1\\
        &\qquad -\delta (\sigma +1) \eps_2-\mu (\sigma +1) \eps_2)) +\gamma \delta \mu \eps_1 (\delta  (\sigma -1)^2 \eps_1 +\delta (\sigma^{2}+2 
            \sigma -1) \eps_2+\mu (\sigma^{2}+2 \sigma -1) \eps_2)\\
        &\qquad +\delta^{2} \mu^{2} (\sigma -1) \sigma \eps_1^2) +2 \gamma \mu (\gamma  (\delta (\eps_1-\eps_2)-\mu \eps_2) (\mu (-\eps_2)-\delta 
            (-\sigma  \eps_1+\eps_1+\eps_2))+\delta \mu \eps_1 (\delta ((\sigma -1)\eps_1\\
        &\qquad -2 \sigma \eps_2+\eps_2) +\mu (1-2 \sigma) \eps_2)))+\beta^{3} (\sigma -1) (\gamma^{2} \eps_2 (\delta +\mu )+\gamma (\delta^{2} \sigma  
            (\eps_2-2 \eps_1)+\delta \mu ((\sigma +2) \eps_2\\
        &\qquad -(\sigma +1) \eps_1)+2 \mu^{2} \eps_2) -\delta \mu \sigma \eps_1 (\delta \sigma +\delta +2 \mu ))-\beta^{2} (\gamma^{2} (\delta +\mu ) (\delta (\eps_1-\eps_2) (\sigma (\eps_2-1)+1)\\
        &\qquad -\mu \eps_2 (\sigma (\eps_2-2)+2)) +\gamma \mu (\delta^{2} ((\sigma -1)\sigma  \eps_1^2+2 \sigma \eps_1+\sigma^{2} \eps_1 (\eps_2-3)+\eps_1+(\sigma^{2}-1) \eps_2)\\
        &\qquad +\delta \mu (\eps_1 (\sigma^{2} 
            (\eps_2-2)+2) +(\sigma^{2}+3 \sigma -4) \eps_2)+3 \mu^{2} (\sigma -1) \eps_2)+\delta \mu^{2} (\sigma -1) \sigma \eps_1 (\delta (-\sigma +\eps_1-2)\\
        &\qquad -3 \mu ))+\beta \mu (-(\gamma^{2} (\delta (\eps_1-\eps_2)-\mu \eps_2) (\delta (\sigma -1) \eps_1-\delta (\sigma +1) \eps_2  -\mu (\sigma +1) \eps_2))+\gamma \delta \mu \eps_1 (\delta(\sigma -1)^2 
            \eps_1\\
        &\qquad +\delta (\sigma^{2}+2 \sigma -1) \eps_2+\mu (\sigma^{2}+2 \sigma -1) \eps_2)+\delta^{2} \mu^{2} (\sigma -1) \sigma \eps_1^2) +\gamma \mu^{2} (\gamma (\delta (\eps_1-\eps_2)-\mu \eps_2) (\mu (-\eps_2)\\
        &\qquad -\delta (-\sigma  \eps_1+\eps_1+\eps_2))+\delta  \mu  \eps_1 (\delta  ((\sigma -1) \eps_1-2 \sigma  \eps_2+\eps_2)+\mu  (1-2 \sigma ) \eps_2))), \\
    c_0 &= \beta^{2} \mu  (\sigma -1) (\gamma +\mu ) (\delta +\mu ) (\alpha -\beta +\mu ) (\gamma  \delta (\eps_1-\eps_2)- \gamma  \mu  \eps_2+\delta  \mu  \sigma \eps_1) .\\
\end{align*}

\noindent When setting $\eps_1 = \eps_2 = \eps$, we get a simpler 4 degree polynomial, analogous to the single-$\eps$ formulation used in Remark \ref{cubicpolyremark} for the prevalence-driven model:

\begin{align*}
    \poly(\infEE,\eps)= \tilde{c}_4 {\infEE}^4 + \tilde{c}_3 {\infEE}^3+ \tilde{c}_2 {\infEE}^2 +\tilde{c}_1 I +\tilde{c}_0 = 0, 
\end{align*}
where

\begin{align*}
    \tilde{c}_4 &= \beta^{2} \gamma \delta  \mu  \eps  (-\alpha^{2} \gamma  \eps +\alpha (\beta^{2} \sigma -2 \gamma  \mu  \eps )+\beta \sigma  (\beta  \delta \sigma +\beta \mu +\delta \mu  \sigma \eps )-\gamma  \mu^{2} \eps ), \\
    \tilde{c}_3 &= \beta  \mu  (-\alpha^{3} \gamma^{2} \delta  \eps^{2}-\alpha^{2} \gamma  \eps  (\beta^{2} (\gamma -2 \delta  \sigma +\delta )-\beta  \gamma  \delta  \eps +4 \gamma  \delta  \mu  \eps )+\alpha  (\beta^{4} (\sigma -1) (\gamma -\delta  \sigma )-\beta^{3} \gamma  \delta  \sigma  \eps \\
    &\qquad + \beta^{2} \gamma  \eps  (\gamma  \delta  (\sigma -2)-2 \gamma  \mu +\delta^{2} \sigma  (\sigma +1)+\delta  \mu  (6 \sigma -2))+\beta  \gamma  \delta  \mu  \eps^{2} (\gamma  (\sigma +2)+\delta  \sigma  (\sigma +1))\\
    &\qquad -5 \gamma^{2} \delta  \mu^{2} \eps^{2})+\beta^{4} (\sigma -1) (\gamma -\delta  \sigma ) (\delta  \sigma +\mu )+\beta^{3} \delta  \sigma  \eps  (-2 \gamma  \delta  \sigma +\gamma  \mu  (\sigma -3)-\delta  \mu  (\sigma -1) \sigma )\\
    &\qquad +\beta^{2} \gamma  \mu  \eps  (\gamma  \delta  (\sigma -2)-\gamma  \mu +\delta^{2} \sigma  (1-2 \sigma  (\eps -1))+\delta  \mu  (4 \sigma -1))+\beta  \gamma  \delta  \mu^{2} (\sigma +1) \eps^{2} (\gamma +\delta  \sigma ))\\
    &\qquad -2 \gamma^{2} \delta  \mu^{3} \eps^{2}, \\
    \tilde{c}_2 &= \mu  (-\alpha^{3} \gamma^{2} \delta  \mu  \eps^{2}+\alpha^{2} \gamma  \eps  (\beta^{2} (\gamma  \delta  (\sigma -2)-2 \gamma  \mu +\delta^{2} \sigma +\delta  \mu  (4 \sigma -2))+\beta  \delta  \mu  \eps  (\gamma  \sigma +\gamma +\delta  \sigma )-3 \gamma  \delta  \mu^{2} \eps )\\
    &\qquad +\alpha  (\beta^{4} (\sigma -1) (\gamma +\delta +3 \mu ) (\gamma -\delta  \sigma )-\beta^{3} \eps  (\gamma^{2} (\delta  (\sigma -2)-\mu  (\sigma +1))+\gamma  \delta  (\delta  \sigma  (\sigma +1)\\
    &\qquad +\mu  (\sigma^{2}+3 \sigma -1)) +\delta^{2} \mu  (\sigma -1) \sigma )-\beta^{2} \gamma  \mu  \eps  (\gamma  \delta  (\sigma  (\eps -3)+6)+4 \gamma  \mu +\delta^{2} \sigma  (-\sigma +\sigma  \eps +\eps -3)\\
    &\qquad +\delta  \mu  (4-9 \sigma )) +2 \beta  \gamma  \delta  \mu^{2} \eps^{2} (\gamma  \sigma +\gamma +\delta  \sigma )-3 \gamma^{2} \delta  \mu^{3} \eps^{2})+\beta^{5} (\sigma -1) (\delta  \sigma -\gamma ) (\delta  \sigma +\mu )\\
    &\qquad +\beta^{4} (\gamma^{2} (\sigma -1) (\delta +\mu ) +\gamma  (\delta^{2} \sigma  (\sigma  (\eps -1)+1)-\delta  \mu  (-\sigma^{2}+(\sigma -2) \sigma  \eps +1)+3 \mu^{2} (\sigma -1))\\
    &\qquad +\delta  \mu  (\sigma -1) \sigma  (\delta  (\sigma  (\eps -2)-1)-3 \mu )) +\beta^{3} \mu  \eps  (\gamma^{2} (2 \delta +\mu  \sigma +\mu )+\gamma  \delta  (\delta  \sigma^{2} (\eps -4)-5 \mu  \sigma +\mu )\\
    &\qquad -\delta^{2} \mu  \sigma  (\sigma^{2}-1))-\beta^{2} \gamma  \mu^{2} \eps  (\gamma  \delta  (\sigma  (\eps -2)+4) +2 \gamma  \mu +\delta^{2} \sigma  (-\sigma +\sigma  \eps +\eps -2)+\delta  \mu  (2-5 \sigma ))\\
    &\qquad +\beta  \gamma  \delta  \mu^{3} \eps^{2} (\gamma  \sigma +\gamma +\delta  \sigma )-\gamma^{2} \delta  \mu ^4 \eps^{2}), \\
    \tilde{c}_1 &= - \beta  \mu  (\alpha^{2} \gamma  \mu  \eps  (\gamma  (\mu -\delta  (\sigma -2))+\delta  (-\delta  \sigma -2 \mu  \sigma +\mu ))-\alpha  (\beta^{2} (\sigma -1) (\gamma  (\delta +2 \mu )+\mu  (2 \delta +3 \mu )) (\gamma -\delta  \sigma )\\
    &\qquad +\beta  \mu  \eps  (\gamma^{2} (2 \delta +\mu  \sigma +\mu )-\gamma  \delta  (2 \delta  \sigma^{2}+\mu  (\sigma^{2}+2 \sigma -1))-\delta^{2} \mu  (\sigma -1) \sigma )+2 \gamma  \mu^{2} \eps  (\gamma  \delta  (\sigma -2)\\
    &\qquad -\gamma  \mu +\delta^{2} \sigma +\delta  \mu  (2 \sigma -1)))+\beta^{3} (\sigma -1) (\gamma -\delta  \sigma ) (\gamma  (\delta +\mu )+\mu  (\delta  \sigma +\delta +2 \mu ))\\
    &\qquad +\beta^{2} \mu  (\gamma^{2} (\delta +\mu ) (\sigma  (\eps -2)+2)+\gamma  (\delta^{2} \sigma  (2 \sigma -2 \sigma  \eps +\eps -2)+\delta  \mu  (-3 \sigma -(\sigma^{2} (\eps -1))+2)\\
    &\qquad -3 \mu^{2} (\sigma -1)) +\delta  \mu  (\sigma -1) \sigma  (\delta  (\sigma -\eps +2)+3 \mu ))-\beta  \mu^{2} \eps  (\gamma^{2} (2 \delta +\mu  \sigma +\mu )-\gamma  \delta  (2 \delta  \sigma^{2} \\
    &\qquad +\mu  (\sigma^{2}+2 \sigma -1)) -\delta^{2} \mu  (\sigma -1) \sigma )+\gamma  \mu^{3} \eps  (\gamma  (\mu -\delta  (\sigma -2))+\delta  (-\delta  \sigma -2 \mu  \sigma +\mu ))),\\
    \tilde{c}_0 &= \beta^{2} \mu^{2} (\sigma -1) (\gamma +\mu ) (\delta +\mu ) (\alpha -\beta +\mu ) (\gamma -\delta  \sigma ).\\
\end{align*}

\section{Stability of endemic equilibria for the prevalence-driven model}\label{appendixB}

Figure \ref{fig:eps1_eps2_prevalence_I} replicates the endemic-prevalence analysis for the prevalence-driven model, analogous to Figure \ref{fig:eps1_eps2_prevalence} for the transmission-driven case.

Section \ref{sec: stability and bifurcation} restricts attention to the diagonal $\epstilde_1 = \epstilde_2 = \epstilde$, along which the endemic equilibrium of the prevalence-driven model remains locally asymptotically stable for every $\beta_0$ in the realistic range tested and every $\eps \geq 0$ (Figure \ref{fig:eps1_eps2_hopf_surface_I}). This stability does not extend to the full two-parameter $(\epstilde_1, \epstilde_2)$ space; Figure \ref{fig:eps1_eps2_hopf_surface_I} shows an unstable region for sufficiently small $\epstilde_1$, lying just off the diagonal but not crossing it. 

\begin{figure}
    \centering
    \includegraphics[width=1\textwidth]{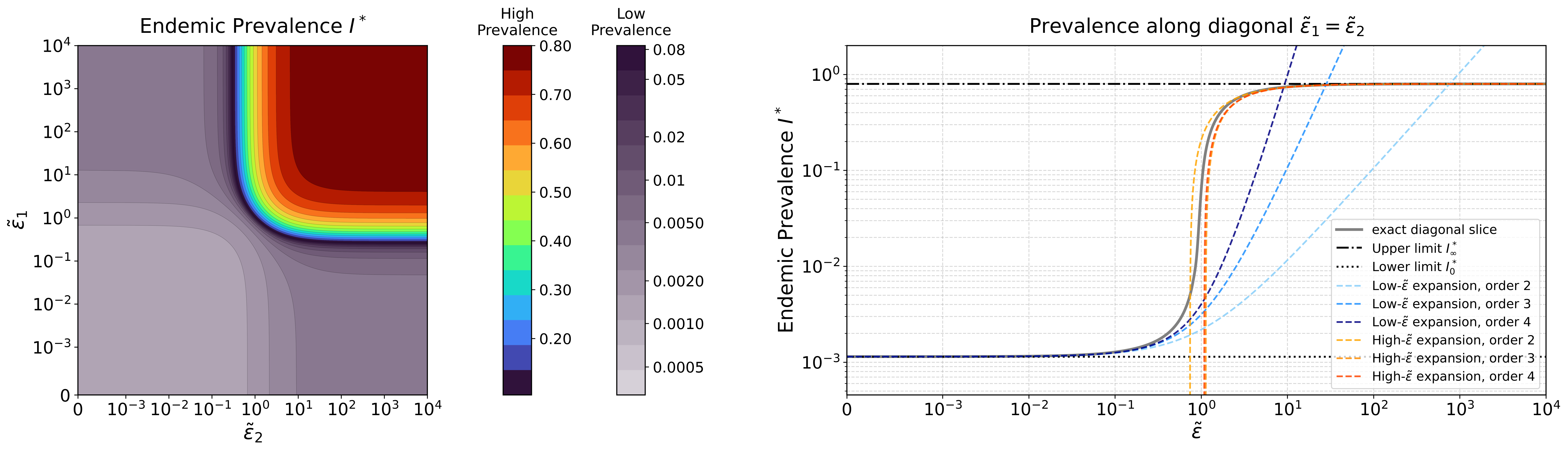}
    \caption{(see Appendix \ref{appendixB}) Endemic prevalence $I^*$ of the prevalence-driven model of the SIRCm model, analogous to Figure~\ref{fig:eps1_eps2_prevalence}. \emph{Left:} two-dimensional plot of $I^*$ as a function of $\varepsilon_1$ and $\varepsilon_2$. \emph{Right:} diagonal slice $\tilde\varepsilon_1 = \tilde\varepsilon_2$, together with the first three orders of the Taylor series approximations for the low- and high-feedback regimes. The dotted and dash-dotted lines mark the lower and upper limits $\infEE[0]$ and $I_\infty^*$, respectively, as defined in the caption of Figure~\ref{fig:eps1_eps2_prevalence}}    \label{fig:eps1_eps2_prevalence_I}
\end{figure}

Beyond prevalence, Remark~\ref{cubicpolyremark} notes that once the endemic prevalence $I^*_\eps$ is known, the remaining components of the endemic equilibrium of the prevalence-driven model can be computed. Writing $G := 1 + \varepsilon I^*_\varepsilon$, the equilibrium equations $\dot C = 0$ and $\dot R = 0$ give, in  
order,

\begin{align*}
  C^* &= \frac{\alpha\,\delta\,G\,I^*_\varepsilon}
              {\beta I^*_\varepsilon\,(\mu + \sigma\delta G) + (\mu + \delta G)(\mu + \gamma G)}, \\[4pt]
  R^* &= \frac{C^*\,(\beta I^*_\varepsilon + \mu + \gamma G)}{\delta G}, \\[4pt]
  S^* &= \mathcal{R}_0^{-1} - \sigma C^*,
\end{align*}

where the last identity is~\eqref{eq: relation SC endemic} obtained from $\dot I = 0$.

\section{Seasonally forced time series---some more examples}
\label{app:seasonal}

Figures \ref{fig:seasonalforcing-example}--\ref{fig20} provide additional examples of the seasonally forced comparison introduced in Section \ref{sec:seasonally_forced}. We contrast the tropical regime ($\beta_0 = 1200$, $\eta = 0.07$) against the temperate regime ($\beta_0 = 400$, $\eta = 0.18$), using the same SIRC baseline ($\tilde\varepsilon = 0$) for reference.

\begin{figure}
    \centering
    \includegraphics[width=1\textwidth]{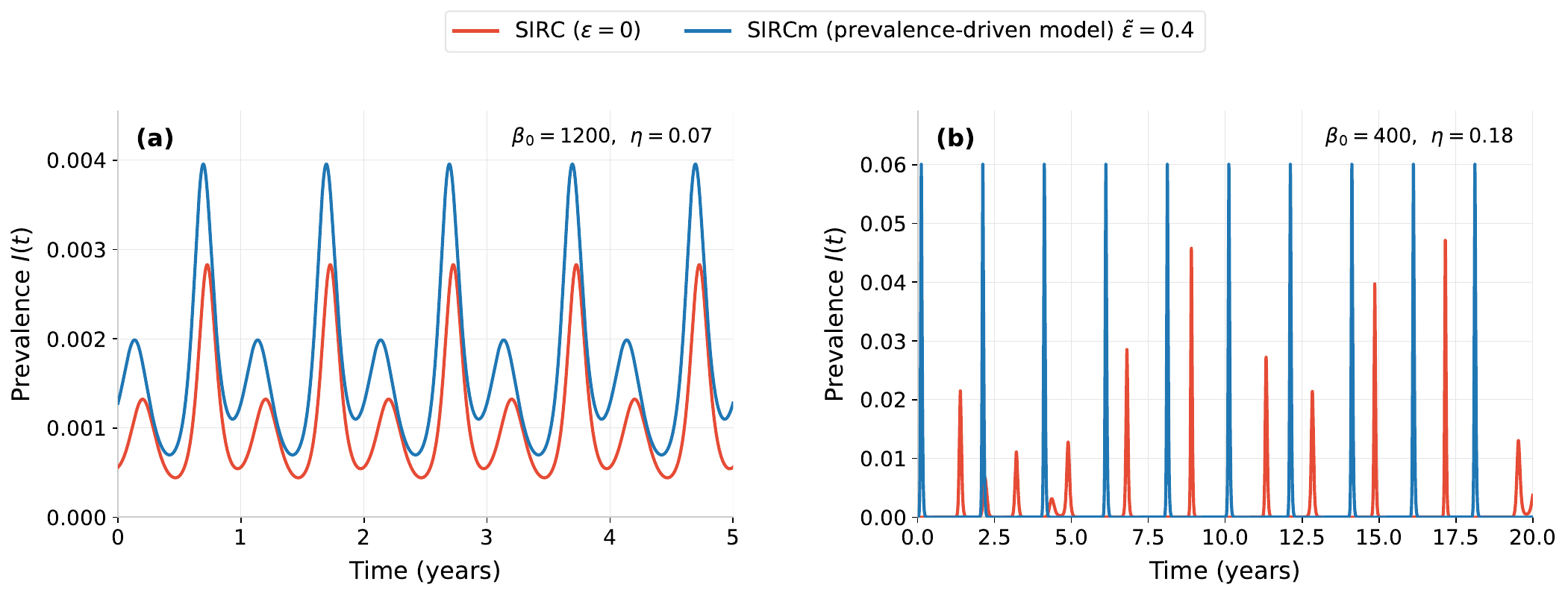}
    \caption{(see Appendix \ref{app:seasonal}) Replication and extension of Figure 5 in \cite{casagrandi2006} for the prevalence-driven model of the SIRCm model ($\tilde{\varepsilon}=0.4$) compared to the standard SIRC baseline ($\tilde{\varepsilon}=0$). In the tropical regime (a), both models remain on an annual cycle, with SIRCm attaining higher peak and trough prevalence than SIRC. In the temperate regime (b), SIRC settles into the chaotic attractor reported by Casagrandi et al., while SIRCm shows a regular periodic cycle}    
    \label{fig:seasonalforcing-example}
\end{figure}

\begin{figure}
    \centering
    \includegraphics[width=1\textwidth]{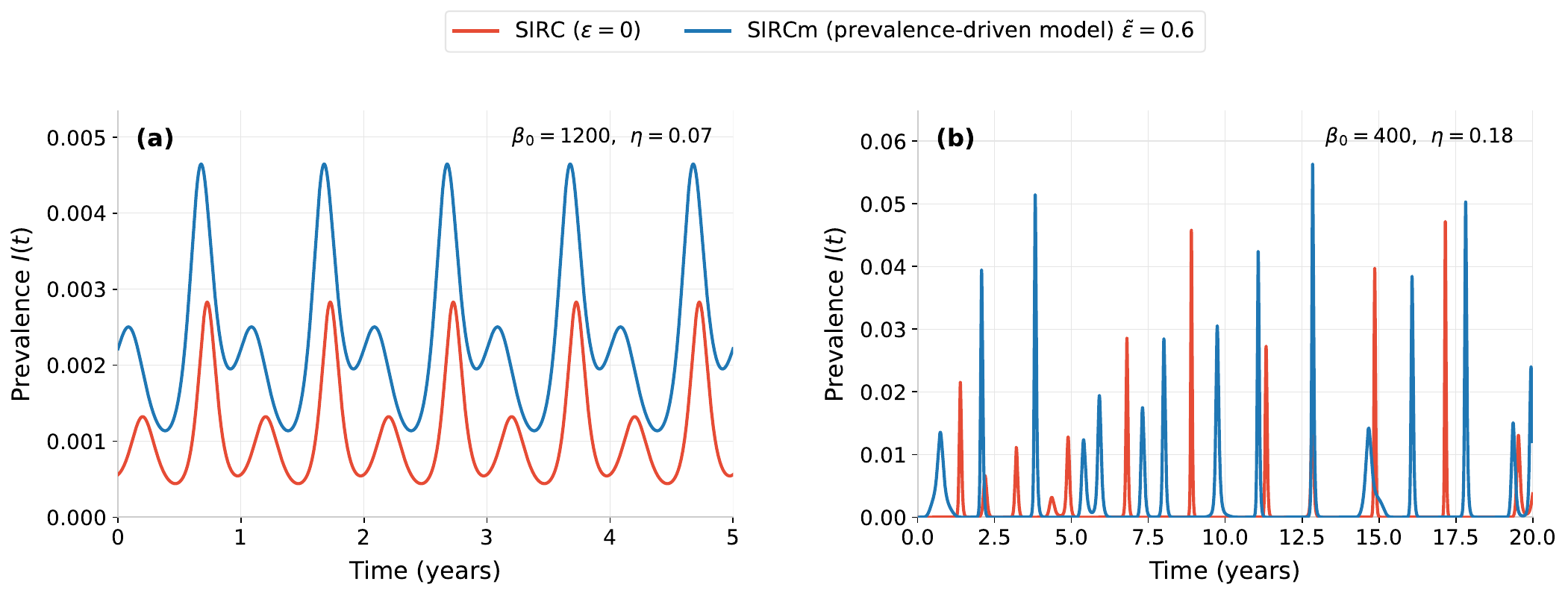}
    \caption{(see Appendix \ref{app:seasonal}) As Figure~\ref{fig:seasonalforcing-example}, at $\tilde\varepsilon = 0.6$. In the tropical regime (a), both models remain on an annual cycle, with SIRCm attaining higher peak and trough prevalence than SIRC. In the temperate regime (b), both models show chaotic behavior}    
    \label{fig18}
\end{figure}

\begin{figure}
    \centering
    \includegraphics[width=1\textwidth]{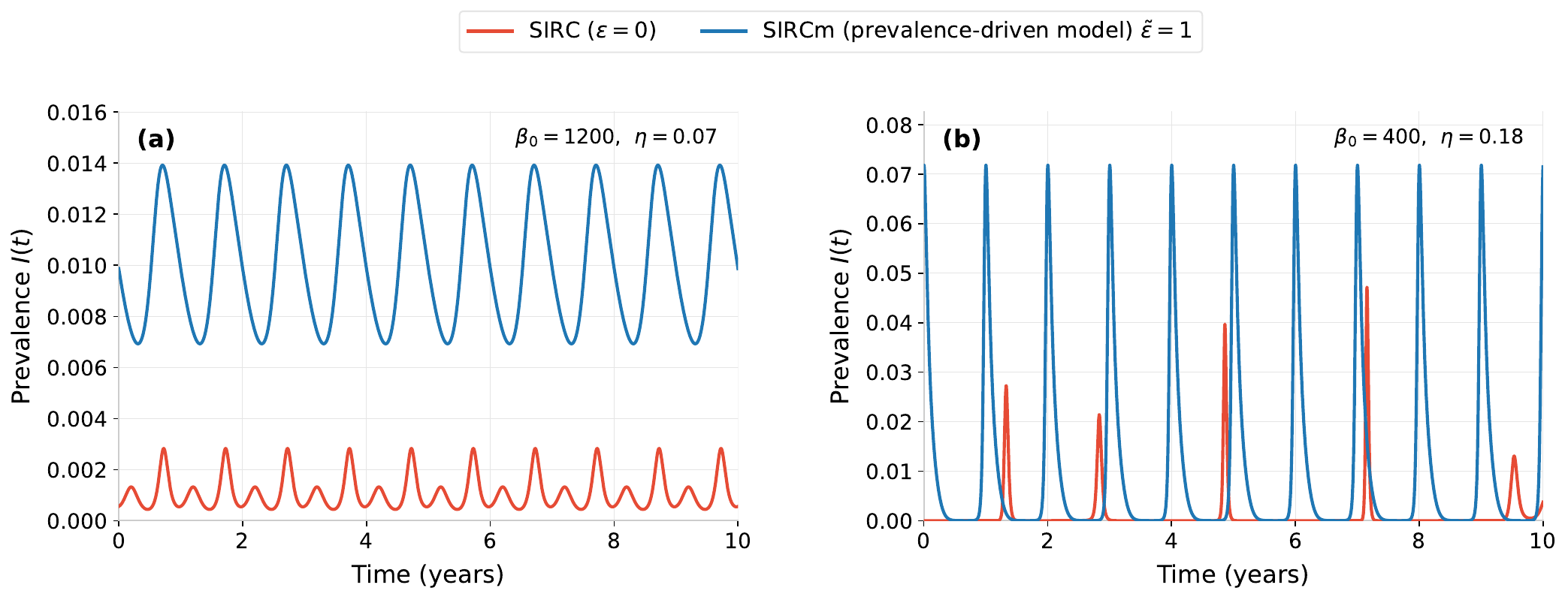}
    \caption{(see Appendix \ref{app:seasonal}) As Figure~\ref{fig:seasonalforcing-example}, at $\tilde\varepsilon = 1$. In the tropical regime (a), SIRCm now oscillates as a single-peaked annual cycle at substantially higher prevalence. In the temperate regime (b), SIRCm settles into a regular annual cycle, in contrast with the chaotic SIRC baseline}    
    \label{fig19}
\end{figure}

\begin{figure}
    \centering
    \includegraphics[width=1\textwidth]{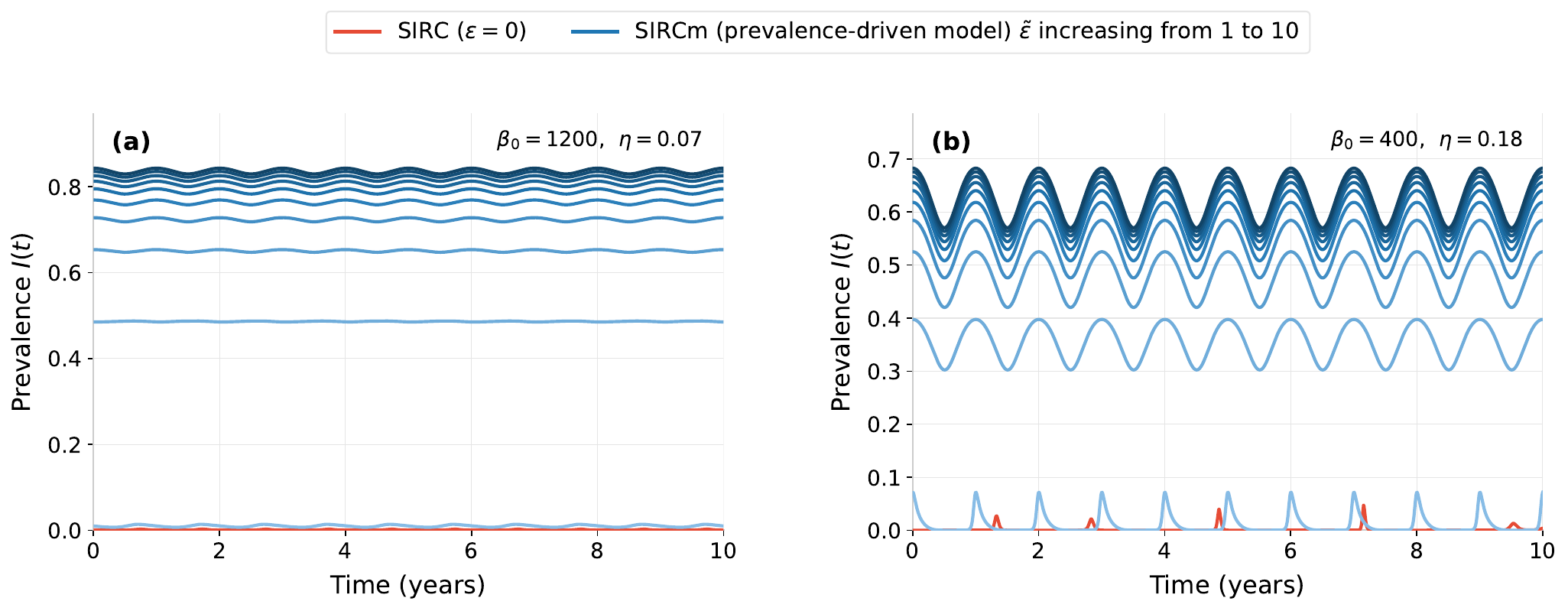}
    \caption{(see Appendix \ref{app:seasonal}) Family of prevalence-driven SIRCm time series for $\tilde{\varepsilon}$ ranging from $1$ to $10$ (increasingly dark blue), compared against the SIRC baseline ($\tilde{\varepsilon}=0$, negligible at this scale). In both regimes, SIRCm remains on a smooth, single-peaked annual cycle throughout, and its mean prevalence rises toward the theoretical ceiling $I^*_\infty = 1-\mathcal{R}_0^{-1}$ as $\tilde{\varepsilon}$ increases. In the tropical regime (a), the oscillation amplitude collapses, leaving an almost constant high prevalence. In the temperate regime (b), a visible annual oscillation persists, with the curves instead compressing toward the upper end of the range as $\tilde{\varepsilon}$ grows}    
    \label{fig20}
\end{figure}

\end{appendices}

\newpage

\bibliographystyle{sn-mathphys-ay}
\bibliography{references}

\end{document}